\theoremstyle{plain}
\newtheorem{theorem}{Theorem}[section]
\newtheorem{thm}[theorem]{Theorem}
\newtheorem{lem}[theorem]{Lemma}
\newtheorem{lemma}[theorem]{Lemma}
\newtheorem{prop}[theorem]{Proposition}
\newtheorem{cor}[theorem]{Corollary}
\theoremstyle{remark}
\theoremstyle{definition}
\newtheorem{example}[theorem]{Example}
\newtheorem{defn}[theorem]{Definition}
\newtheorem{rmk}[theorem]{Remark}
\newcommand{\bbc}{\mathbb{C}}
\newcommand{\bbq}{\mathbb{Q}}
\newcommand{\bbt}{\mathbb{T}}
\newcommand{\bbz}{\mathbb{Z}}
\newcommand{\mca}{\mathcal{A}}
\newcommand{\mce}{\mathcal{E}}
\newcommand{\mcf}{\mathcal{F}}
\newcommand{\mcg}{\mathcal{G}}
\newcommand{\mcm}{\mathcal{M}}
\newcommand{\mcn}{\mathcal{N}}
\newcommand{\mcr}{\mathcal{R}}
\newcommand{\mcs}{\mathcal{S}}
\newcommand{\mcy}{\mathcal{Y}}
\newcommand{\mfs}{\mathfrak{S}}
\renewcommand{\aa}{\alpha}
\newcommand{\bb}{\beta}
\newcommand{\Lam}{\Lambda}
\newcommand{\ov}{\overline}
\newcommand{\sgn}{\textnormal{sign}}
\newcommand{\triv}{\textnormal{triv}}
\newcommand{\starr}{\textnormal{star}}
\newcommand{\CMG}{\textnormal{CMG}}
\newcommand{\bfp}{\mathbf{P}}
\newcommand{\bfx}{\mathbf{X}}
\newcommand{\bfy}{\mathbf{Y}}
\DeclareMathOperator{\Dih}{\textnormal{Dih}}
\DeclareMathOperator{\tGr}{\hat{\textnormal{Gr}}}
\DeclareMathOperator{\GL}{\textnormal{GL}}
\DeclareMathOperator{\SL}{\textnormal{SL}}
\DeclareMathOperator{\Span}{\textnormal{span}}
\DeclareMathOperator{\QAut}{\textnormal{QAut}}
\DeclareMathOperator{\Aut}{\textnormal{Aut}}
\title{Quasi-homomorphisms of cluster algebras}
\author{Chris Fraser}
\date{}
\keywords{Cluster algebra, seed orbit, quasi-homomorphism, cluster modular group, tagged mapping class group.}
\thanks{This work was supported by a graduate fellowship from the National Physical Science Consortium and NSF grant DMS-1361789. }
\subjclass[2010]{13F60}
\address{Department of Mathematics, University of Michigan, Ann Arbor, MI, 48109, USA}
\email{cmfra@umich.edu}
\begin{document}

%\thanks{This work was supported by a graduate fellowship from the National Physical Science Consortium and NSF grant DMS-1361789. }
%\subjclass[2010]{13F60}
\setcounter{tocdepth}{1}

\numberwithin{equation}{section}

\begin{abstract}
We introduce quasi-homomorphisms of cluster algebras, a flexible notion of a map between cluster algebras of the same type (but with different coefficients). The definition is given in terms of seed orbits, the smallest equivalence classes of seeds on which the mutation rules for non-normalized seeds are unambiguous. We present examples of quasi-homomorphisms involving familiar cluster algebras, such as cluster structures on Grassmannians, and those associated with marked surfaces with boundary. 
We explore the related notion of a quasi-automorphism, and compare the resulting group with other groups of symmetries of cluster structures. 
For cluster algebras from surfaces, we determine the subgroup of quasi-automorphisms inside the tagged mapping class group of the surface.
\end{abstract}

\maketitle
%\tableofcontents
\vspace{-.3in}

\section*{Introduction}
The general structural theory of cluster algebras has been well developed during the 15 years since their inception \cite{CAI}. Despite this, there does not seem to be a consensus on what the ``right'' notion of a homomorphism between cluster algebras should be-- several such notions have arisen in different mathematical settings, see e.g.~\cite{ADS, ASS, ChangZhua, ChangZhub, Reading, ReadingSurfaces}. From our perspective, the key difficulty in defining homomorphisms of cluster algebras is rooted in the fact that the construction of a cluster algebra involves three operations:  the addition, the multiplication, and the auxiliary addition used in the normalization condition. Most preexisting notions of a ``cluster homomorphism'' are designed to respect all three of these operations, a rather restrictive requirement. We suggest instead that even in the ordinary (i.e., normalized) setting, it is fruitful to consider maps that only preserve the structures intrinsic to non-normalized cluster algebras,  ignoring the auxiliary addition. This leads us to the concept of \emph{seed orbits} and to the mutation patterns these orbits form. The morphisms between such mutation patterns are the main object of our interest; we call them \emph{quasi-homomorphisms}. This paper is devoted to a systematic study of quasi-homomorphisms and related algebraic constructs. 

\begin{center}
\rule{6cm}{0.4pt}
\end{center}

A  cluster algebra is defined by specifying a distinguished set of generators (called \emph{cluster variables}) inside an ambient field of rational functions in $n$ variables. Starting from an \emph{initial cluster} of $n$ cluster variables, the remaining cluster variables are obtained by iterating algebraic steps called \emph{mutations}. Each mutation produces a new cluster from a current one by exchanging one cluster variable for a new one. The specific rules for computing the latter are encoded by two additional ingredients, an $n \times n$ \emph{exchange matrix}~$B$ and a \emph{coefficient tuple}~$\mathbf{p}$ consisting of elements of some fixed \emph{coefficient group}. The triple consisting of the cluster, the exchange matrix, and the coefficient tuple, is called a \emph{seed}. When a cluster mutates, the ingredients~$B$ and~$\mathbf{p}$ also do: the new matrix $B'$ is given explicitly in terms of $B$, and the new tuple $\mathbf{p}'$ satisfies a constraint involving~$\mathbf{p}$ and~$B$. A collection of seeds related to each other by mutations in all possible directions forms a \emph{seed pattern}. 

In the most general cluster algebra setup -- that of \emph{non-normalized seed patterns}~\cite{CAIII,CATSII, CAI}, the mutation recipe does not uniquely specify the new coefficient tuple~$\mathbf{p}'$ from~$\mathbf{p}$ and~$B$. This ambiguity propagates through iterated mutations, and consequently the set of cluster variables is not uniquely determined by the initial seed. 

The usual way to remove this ambiguity is to  impose the additional assumption that the coefficient group is endowed with an additional operation of ``auxiliary addition'' (making it into a \emph{semifield}), and then require the corresponding \emph{normalization condition} to hold at every seed. This assumption is satisfied for the most important examples of cluster algebras arising in representation theory. In this paper, we make use of another way of removing the ambiguity by considering \emph{seed orbits}, the smallest equivalence classes of seeds on which the mutation rules are unambiguous. This gives rise to to the concept of a mutation pattern of seed orbits. Such a pattern is determined uniquely by any one of its constituent seed orbits. The natural notion of a homomorphism between two mutation patterns of seed orbits brings us to the definition of a \emph{quasi-homomorphism}, a rational map (more precisely, a semifield homomorphism) that respects the seed orbit structure and commutes with mutations.  

Though the appropriate context for defining quasi-homomorphisms is that of non-normalized seed patterns, we see two ways in which quasi-homomorphisms are useful in the structural theory of ordinary (normalized) seed patterns. First, it is important to understand the relationships between cluster algebras with the same underlying pattern of exchange matrices but with different choices of coefficients. One celebrated result of this kind is the \emph{separation of additions formula} (\cite[Theorem $3.7$]{CAIV}). For a given mutation pattern of exchange matrices, this formula expresses the cluster variables in a cluster algebra with \emph{any} choice of coefficients in terms of those in a cluster algebra with a special choice of \emph{principal coefficients}. Our Proposition \ref{qhnormalizedprop} puts this formula in a wider context, in which every quasi-homomorphism between a pair of normalized seed patterns witnesses its own separation of additions. This idea can be used to construct a new cluster algebra starting from a known one. (More precisely, using a known cluster structure on an algebra~$R$, one can produce a cluster structure on another algebra~$R'$ by describing an appropriate map from~$R$ to~$R'$.)

Second, the naturally defined concept of a \emph{quasi-automorphism} gives rise to the \emph{quasi-automorphism group} of a seed pattern. This group interpolates between previously defined groups that are either  too sensitive to coefficients (these groups are too small) or don't refer to coefficients at all (these groups are too large). Most cluster algebras arising in applications have nontrivial coefficients, and these cluster algebras often afford nontrivial self-maps that are quasi-automorphisms of the cluster structure. The twist map on the Grassmannian~\cite{MarshScott} is one important example, cf.~Remark~\ref{twistremark}. In a forthcoming companion paper we construct a large group of quasi-automorphisms of the Grassmannian cluster algebras \cite{Scott} whose action on cluster variables has a simple description. Much of the abstract setup in this paper was developed with that application in mind. 

%\medskip
%A quasi-homomorphism is similar in spirit -- but different in details --  to several preexisting notions in the literature, namely those of \emph{cluster automorphism} and \emph{rooted cluster morphism} (cf.~\cite{ADS, ASS}) and \emph{coefficient specialization} (cf.~\cite{CAII, Reading, ReadingSurfaces}). We demarcate the key differences in Section \ref{QHSecn} once we have given the relevant definitions. 

%We would also like to mention that quasi-homomorphisms have been useful in a forthcoming book on cluster algebras \cite{CABook} to prove one direction of the finite type classification (namely, that a cluster algebra with a seed whose principal part is an orientation of a Dynkin quiver necessarily has only finitely many seeds).  

\medskip

The paper is organized as follows. Section \ref{PreliminariesSecn} presents background on non-normalized seed patterns. This is mostly standard and taken from \cite{CATSII, CAIV}, but with emphasis on the notion of the \emph{ambient semifield}, cf.~Definition~\ref{ambientsemifielddefn}. The section ends with a motivating example: a pair of seed patterns which will illustrate the various notions in subsequent sections. A reader familiar with cluster algebras can skim this section and head directly to Example \ref{GrandBandsExchangeGraphs}. 

Sections \ref{SeedOrbitsSecn} and \ref{QHSecn} are the conceptual core of the paper. We define seed orbits as the smallest equivalence classes of non-normalized seeds on which the mutation rule is unambiguous. In Proposition \ref{equivalentseedscondition} we give a more explicit characterization of seed orbits as orbits with respect to a rescaling action on seeds. Section \ref{QHSecn}  introduces quasi-homomorphisms of seed patterns and their basic properties. We end this section by describing the key differences between quasi-homomorphisms and some preexisting notions, specifically \emph{rooted cluster morphisms} \cite{ADS} and \emph{coefficient specializations} \cite{CAII, Reading, ReadingSurfaces}.

In Section \ref{NormalizedQHs} we discuss quasi-homomorphisms between normalized seed patterns. For seed patterns of geometric type, we relate quasi-homomorphisms to linear combinations of the rows of an extended exchange matrix, making connections to the separation of additions formula and to gradings on cluster algebras. Section \ref{NervesSecn} introduces the easiest way of specifying a quasi-homomorphism in practice, by checking that a given semifield map sends cluster variables to rescaled cluster variables on a \emph{nerve}. In Section \ref{QASecn} we define the \emph{quasi-automorphism group} of a seed pattern and compare it with the cluster modular group \cite{FG} and the group of cluster automorphisms \cite{ASS}.

Sections \ref{SurfacesSecn} and \ref{SurfacesProofsSecn} focus on cluster algebras associated with bordered marked surfaces~\cite{CATSI, CATSII}. The main result is Theorem \ref{whichgworkwithL} describing the quasi-automorphism group of such a cluster algebra as a subgroup of the tagged mapping class group (excluding a few exceptional surfaces). In particular, it establishes that regardless of the choice of coefficients in such a cluster algebra, the quasi-automorphism group is always a finite index subgroup of the cluster modular group.

%Sections \ref{SurfacesSecn} and \ref{SurfacesProofsSecn} focus on cluster algebras associated with bordered marked surfaces~\cite{CATSI, CATSII}. The main result is Theorem \ref{whichgworkwithL} describing the tagged mapping classes that represent quasi-automorphisms of such a cluster algebra. In particular, it establishes that regardless of the choice of coefficients in such a cluster algebra, the set of such tagged mapping classes always forms a finite index subgroup inside the tagged mapping class group. (which conjecturally coincides with the \emph{cluster modular group} of Fock and Goncharov). 
%We interpret this result as evidence that quasi-automorphisms give a ``rich'' group of automorphisms of cluster algebras in the presence of nontrivial coefficients. 

The concept of a nerve introduced in Section \ref{NervesSecn} is new and includes as a special case the star neighborhood of a vertex. Star neighborhoods show up in the algebraic Hartogs' principle argument used to establish that a given cluster algebra is contained in another algebra \cite[Proposition 3.6]{tensors}.  In Appendix \ref{StarfishSecn} we extend this argument from a star neighborhood to an arbitrary nerve. 

Section \ref{GrassmannianandBands} illustrates the techniques in Section \ref{NormalizedQHs}. We generalize Example \ref{GrandBandsExchangeGraphs} by describing a quasi-isomorphism between the Grassmannian cluster algebras \cite{Scott} and polynomial rings arising as coordinate rings of \emph{band matrices}.

\section*{Acknowledgements}
I would like to thank Ian Le, Greg Muller, and Gregg Musiker for helpful conversations. I especially thank Sergey Fomin for many conversations and suggestions. This work was supported by a graduate fellowship from the National Physical Science Consortium and NSF grant DMS-1361789. While in the midst of carrying out this work, I learned that Thomas Lam and David Speyer had independently obtained results similar to Corollary \ref{geomtypeseedhom} and Remark \ref{separationofadditionsrmk}.

%\addcontentsline{toc}{section}{Acknowledgement}

%%%%%%%%%%%%%%%%%%%%%%%%%%%%%%%%%%%%%%%%%%%%%%%%%%%
\section{Preliminaries on seed patterns}\label{PreliminariesSecn}
A (non-normalized) cluster algebra is constructed from a set of data called a \emph{non-normalized seed pattern}. We define this data now while fixing standard notation. For a number $x$ we let $[x]_+ := \max (x,0)$. We let $\sgn(x)$ equal either  $-1$, $0$ or $1$ according to whether $x$ is negative, zero, or positive. We denote $\{1,\dots,n\}$ by $[1,n]$. 

The setup %of a non-normalized seed pattern 
begins with a choice of \emph{ambient field of rational functions} $\mcf$ with coefficients in a \emph{coefficient group } $\bfp$. The coefficient group is an abelian multiplicative group without torsion. The ambient field is a field of rational functions in $n$ variables with coefficients in $\bfp$: it is the set of expressions that can be made out of $n$ elements $x_1,\dots,x_n$ and the elements of of $\bfp$, using the standard arithmetic operations $+,-,\times$ and $\div$, under the usual notion of equivalence of such rational expressions. The integer $n$ is called the \emph{rank}.

\begin{defn}[Non-normalized seed, \cite{CAI,CATSII}]\label{nnseeddefn}
Let $\bfp$ and $\mcf$ be as above. A \emph{non-normalized seed} in $\mcf$ is a triple $\Sigma = (B,\mathbf{p},\mathbf{x})$, consisting of the following three ingredients: 
\begin{itemize}
\item a skew-symmetrizable $n \times n$ matrix $B = (b_{ij})$,
\item a \emph{coefficient tuple} $\mathbf{p} = (p_1^{\pm},\dots,p_n^{\pm})$ consisting of $2n$ elements in $\bfp$, 
\item a \emph{cluster} $\mathbf{x} = (x_1,\dots,x_n )$ in $ \mcf$, whose elements (called \emph{cluster variables}) 
are algebraically independent and freely generate  $\mcf$ over $\bbq \bfp$. 
\end{itemize}
\end{defn}  

The more restrictive notion of \emph{normalized seed} is given in Definition \ref{normalizedpatterndefn}. Normalized seeds are much more studied in the literature, where they are usually simply called \emph{seeds}. Thus, we persistently use the adjective \emph{non-normalized} in our setting, although this is a little clumsy.

\begin{defn} A \emph{labeled $n$-regular tree}, $\bbt_n$, is an $n$-regular tree with edges labeled by integers so that the set of labels emanating from each vertex is $[1,n]$. We write $t \xrightarrow{k} t'$ to indicate that vertices $t,t'$ are joined by an edge with label $k$. An isomorphism $\bbt_n \to \ov{\bbt}_n$ of labeled trees $\bbt_n$ and $\ov{\bbt}_n$ sends vertices to vertices and edges to edges, preserving incidences of edges and the edge labels. Such an isomorhism is uniquely determined by its value at a single vertex $t \in \bbt_n$. 
\end{defn}

\begin{defn}[Non-normalized seed pattern, \cite{CATSII,CAI}]\label{nnseedpatterndefn}
Let $\bfp$ and $\mcf$ be as above. %Fix a coefficient group $\bfp$ and an ambient field $\mcf$ with coefficients in $\bfp$. 
A collection of non-normalized seeds in~$\mcf$, with one seed~$\Sigma(t)= (B(t),\mathbf{p}(t),\mathbf{x}(t))$ for each~$t \in \bbt_n$, is called a \emph{non-normalized seed pattern} if for each edge~$t \xrightarrow{k} t'$, the seeds $\Sigma(t)$ and $\Sigma(t')$ are related by a \emph{mutation in direction~$k$}:
\begin{itemize}
\item The matrices $B(t)$ and $B(t')$ are related by a \emph{matrix mutation}
\begin{equation}\label{brecurrence}
b_{ij}(t') = 
\begin{cases}
-b_{ij}(t) & \text{ if } i = k \text{ or } j = k \\
b_{ij}(t)+ \sgn(b_{ik}(t))[b_{ik}(t)b_{kj}(t)]_+ & \text{ otherwise,}
\end{cases}
\end{equation}
\item  the coefficient tuples $\mathbf{p}(t)$ and $\mathbf{p}(t')$ are related by  
\begin{equation}\label{nnyrecurrenceatk}
p^\pm_k(t') = p^\mp_k(t) \text{ and }
\end{equation} 
\begin{equation}\label{nnyrecurrence}
\frac{p^+_j(t')}{p^-_j(t')} = 
\begin{cases}
\frac{p^+_j(t)}{p^-_j(t}  p^{+}_k(t)^{b_{kj(t)}} & \text{ if } b_{kj} \geq 0 \\
\frac{p^+_j(t)}{p^-_j(t)}  p^{-}_k(t)^{b_{kj(t)}} & \text{ if } b_{kj} \leq 0 \\
\end{cases}
\end{equation}
when $j \neq k$, 
\item and the clusters $\mathbf{x}(t)$ and $\mathbf{x}(t')$ are related by 
\begin{equation}\label{nnxrecurrenceatj}
x_j(t') = x_j(t) \text{ for $j \neq k$, and }
\end{equation} 
\begin{equation}\label{nnxrecurrence}
x_k(t)x_k(t') =p^+_k \prod x_j(t)^{[b_{jk}]_+} +  p^-_k \prod x_j(t)^{[-b_{jk}]_+} ,
\end{equation}
the latter of which is called an \emph{exchange relation}. 
\end{itemize}
\end{defn}

The rules \eqref{brecurrence} through \eqref{nnxrecurrence} are ambiguous, meaning $\Sigma(t')$ is not determined uniquely from $\Sigma(t)$. Indeed, since \eqref{nnyrecurrence} only mentions the ratio $\frac{p^+_j(t')}{p^-_j(t')}$, for each $j \neq k $ one can rescale both of  $p^+_j(t')$ and $p^-_j(t')$ by a common element of $\bfp$ while preserving \eqref{nnyrecurrence}. We write~$\Sigma \overset{\mu_k}{\leftrightsquigarrow} \Sigma'$ to indicate that two seeds~$\Sigma$ and~$\Sigma$ are related by a mutation in direction~$k$; this condition is symmetric in~$\Sigma$ and~$\Sigma'$.   

Thinking of \eqref{nnxrecurrence} as a recipe for computing $x_k(t')$ from $\Sigma(t)$, we crucially observe that the computation is \emph{subtraction-free}: the only operations needed are $+,\times$ and $\div$ in $\mcf$. This motivates the following definition: 

\begin{defn}[Ambient semifield]\label{ambientsemifielddefn}
Let $\mce$ be a non-normalized seed pattern, and $\mathbf{x}(t)$ one of its clusters. The \emph{ambient semifield}, $\mcf_{>0} = \mcf_{>0}(\mce) \subset \mcf$ is the subset of all elements which can be given as a \emph{subtraction-free} rational expression in the elements of $\mathbf{x}(t)$, with coefficients in $\bfp$. Thus, it is the set of rational functions which can be built out of~$x_1(t),\dots,x_n(t)$ and the elements of $\bfp$ using the operations $+,\times$ and $\div$ in $\mcf$. 
\end{defn}

Since \eqref{nnxrecurrence} is subtraction-free, $\mcf_{>0}$ is independent of the choice of $t$ (it only depends on $\mce$), and every cluster variable for $\mce$ lies in $\mcf_{>0}$. Recall that a \emph{semifield} is an abelian multiplicative group, with an additional binary operation (called the auxiliary addition) that is commutative and associative, and distributes over multiplication. The ambient semifield is a semifield with respect to the multiplication and addition operations in $\mcf$, justifying its name. Homomorphisms between semifields are defined in the obvious way. The ambient semifield has the following universality property.

\begin{lem}[{\cite[Definition $2.1$]{CAIV}}]\label{universalsemifieldlemma}
Let $\mce$ be a non-normalized seed pattern with coefficient group $\bfp$ and ambient semifield $\mcf_{>0}$.  Fix a cluster $\mathbf{x}(t)$ in $\mce$. Let $\mcs$ be \emph{any} semifield. Then given a multiplicative group homomorphism $\bfp \to \mcs$, and a function $\mathbf{x}(t) \to \mcs$, there exists a unique semifield homomorphism $\mcf_{>0} \to \mcs$ agreeing with the given maps on $\bfp \cup \mathbf{x}(t)$. 
\end{lem}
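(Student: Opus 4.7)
The plan is to prove this by directly constructing the semifield homomorphism and reducing well-definedness to a statement about the \emph{universal semifield of subtraction-free rational expressions}—namely, that the ambient semifield $\mcf_{>0}$ is the concrete realization of this universal object inside $\mcf$.

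For \emph{uniqueness}, I would simply observe that by Definition \ref{ambientsemifielddefn} every $f \in \mcf_{>0}$ is a subtraction-free rational expression in $x_1(t),\dots,x_n(t)$ and the elements of $\bfp$, built using $+$, $\times$, $\div$. A semifield homomorphism commutes with all three operations, so its value on such an $f$ is forced by its values on the generators $\bfp \cup \mathbf{x}(t)$, which are prescribed.

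For \emph{existence}, I would define $\phi:\mcf_{>0} \to \mcs$ by substitution: given $f \in \mcf_{>0}$, pick any subtraction-free expression $E$ representing $f$, and let $\phi(f)$ be the result of replacing each $x_i(t)$ and each $p \in \bfp$ with its specified image in $\mcs$ and evaluating in $\mcs$. Every $f$ admits a canonical form $f = P/Q$, where $P$ and $Q$ are \emph{positive Laurent polynomials}, i.e.\ finite sums of monomials $p \cdot x_1(t)^{a_1}\cdots x_n(t)^{a_n}$ with $p \in \bfp$ and $a_i \in \bbz$. Well-definedness is the content of the argument. Since $x_1(t),\dots,x_n(t)$ are algebraically independent over $\bbq\bfp$, two representations $P/Q$ and $R/S$ agree as elements of $\mcf$ if and only if $PS = QR$ as elements of $\bbq\bfp[x(t)^{\pm 1}]$. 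Because $PS$ and $QR$ are themselves positive Laurent polynomials, this equality amounts to an equality of the coefficients of each monomial $x(t)^\alpha$, with each coefficient a finite multiset (with multiplicity) of elements of $\bfp$. Since such an equality of positive sums uses only the group structure of $\bfp$ and the auxiliary addition—never subtraction—it is preserved under any semifield homomorphism. Hence $\phi(P/Q) = \phi(R/S)$ in $\mcs$, and $\phi$ is well-defined. The homomorphism property of $\phi$ is built into the construction, and $\phi$ extends the given maps on $\bfp$ and $\mathbf{x}(t)$ by design.

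The main obstacle is precisely the well-definedness step: making rigorous the claim that two positive rational expressions which are equal as rational functions in $\mcf$ are already equal as ``formal positive quotients of positive polynomials.'' This is the substantive content of the universality of the subtraction-free semifield, and it ultimately rests on the algebraic independence of $x_1(t),\dots,x_n(t)$ over $\bbq\bfp$ together with the absence of additive cancellation among genuinely positive sums. In the write-up I would either unpack this argument carefully at the level of the group algebra $\bbz\bfp[x(t)^{\pm 1}]$ or, following the cited \cite[Definition $2.1$]{CAIV}, invoke the universal property of the universal semifield on $n$ variables (tensored over $\bfp$) and identify $\mcf_{>0}$ with it.
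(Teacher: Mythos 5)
Your argument is correct, and it is worth noting that the paper does not actually prove this lemma: it is quoted from \cite[Definition 2.1]{CAIV}, where Fomin and Zelevinsky introduce the universal semifield of subtraction-free rational expressions and record its universal property without detailed proof. What you have written is essentially the standard justification of that universal property, transplanted to the concrete realization of $\mcf_{>0}$ inside $\mcf$. Uniqueness is exactly as you say. For existence, your reduction is the right one, and the two points that carry the weight are (i) every subtraction-free expression can be brought to the form $P/Q$ with $P,Q$ positive Laurent polynomials using only the semifield axioms, so evaluating an arbitrary expression agrees with evaluating its canonical form; and (ii) if $P/Q = R/S$ in $\mcf$ then $PS = QR$ in $\bbq\bfp[x(t)^{\pm 1}]$, and since distinct elements of $\bfp$ are linearly independent in the group algebra (here one quietly uses that $\bfp$ is torsion-free, so $\bbq\bfp$ is a domain and $\mcf$ really is its field of fractions in $n$ indeterminates), this forces the coefficient of each monomial $x(t)^\alpha$ to be the same positive multiset of elements of $\bfp$ on both sides; such a multiset equality is manifestly preserved by substituting the images of the generators into $\mcs$, because only $\oplus$ and $\times$ in $\mcs$ and the group homomorphism on $\bfp$ are invoked. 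One small caution for the write-up: you are given only a multiplicative group homomorphism on $\bfp$, not a semifield map, so be careful to phrase step (ii) as equality of formal positive sums of group elements rather than as ``preservation of an identity in $\bfp$''; your parenthetical about multisets already does this correctly.
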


The following elements of $\mcf_{>0}$ will play a prominent role in Section \ref{SeedOrbitsSecn}. 
\begin{defn}[Hatted variables]\label{yhatsdefn}
Let $\mce$ be a non-normalized seed pattern. Let $\hat{\mathbf{y}}(t) = (\hat{y}_1(t),\dots,\hat{y}_n(t))$ denote the $n$-tuple of \emph{hatted variables}   
\begin{equation}\label{hattedvarsdefn}
\hat{y}_j(t) = \frac{p^+_j(t)}{p^-_j(t)} \prod_{i} x_i(t)^{b_{ij}(t)},
\end{equation} 
obtained by taking the ratio of the two terms on the right hand side of \eqref{nnxrecurrence}.
\end{defn}

%Thus the quantity $\hat{y}_k(t)$ is the ratio of the two terms on the right hand side of an exchange relation \eqref{nnxrecurrence}. 
The hatted variables in adjacent seeds determine each other as follows:  
\begin{prop}[{\cite[Proposition $2.9$]{CATSII}}] \label{nnyhatsmutate} 
Let $\mce = (B(t),\mathbf{p}(t),\mathbf{x}(t))$ be a non-normalized seed pattern with hatted variables $\hat{\mathbf{y}}(t)$. For each edge $t \xrightarrow{k} t'$, the $n$-tuples $\hat{\mathbf{y}}(t)$ and $\hat{\mathbf{y}}(t')$ satisfy
\begin{equation}\label{yrecurrencewplus}
\hat{y}_{j}(t') = 
\begin{cases}
\hat{y}_j(t)^{-1} & \text{ if } j = k  \\
\hat{y}_j(t) \hat{y}_k(t)^{[b_{kj}(t)]_+}(\hat{y}_k(t) + 1)^{-b_{kj}(t)} & \text{ if } j \neq k. 
\end{cases}
\end{equation}
\end{prop}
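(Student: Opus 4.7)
The plan is to verify \eqref{yrecurrencewplus} by direct substitution, applying Definition \ref{yhatsdefn} at the vertex $t'$ and rewriting every ingredient using the mutation rules \eqref{brecurrence}, \eqref{nnyrecurrenceatk}, \eqref{nnyrecurrence}, \eqref{nnxrecurrenceatj}, \eqref{nnxrecurrence}.

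The case $j=k$ is immediate: by \eqref{brecurrence} we have $b_{ik}(t') = -b_{ik}(t)$, and since $b_{kk}(t) = 0$ (skew-symmetrizability forces zero diagonal), the factor $x_k(t')^{b_{kk}(t')} = 1$, so $\prod_i x_i(t')^{b_{ik}(t')} = \prod_i x_i(t)^{-b_{ik}(t)}$. Combined with the reciprocation $p^+_k(t')/p^-_k(t') = p^-_k(t)/p^+_k(t)$ from \eqref{nnyrecurrenceatk}, this yields $\hat{y}_k(t') = \hat{y}_k(t)^{-1}$.

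For $j \neq k$, the key step is to rewrite the exchange relation \eqref{nnxrecurrence} in the two equivalent factored forms
\begin{align*}
x_k(t) x_k(t') &= p^-_k(t) \, \textstyle\prod_i x_i(t)^{[-b_{ik}(t)]_+} \, (\hat{y}_k(t)+1) \\
&= p^+_k(t) \, \textstyle\prod_i x_i(t)^{[b_{ik}(t)]_+} \, \hat{y}_k(t)^{-1}(\hat{y}_k(t)+1),
\end{align*}
using the identity $[a]_+ - [-a]_+ = a$. Next I split the product $\prod_i x_i(t')^{b_{ij}(t')}$ into the $i=k$ contribution $x_k(t')^{-b_{kj}(t)}$ and the $i \neq k$ contribution, where $x_i(t') = x_i(t)$ and the new $b$-entries are $b_{ij}(t) + \sgn(b_{ik}(t))[b_{ik}(t) b_{kj}(t)]_+$ by \eqref{brecurrence}. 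Multiplying by $x_k(t)^{-b_{kj}(t)}x_k(t)^{b_{kj}(t)} = 1$ lets me regroup as
\[
\textstyle\prod_i x_i(t')^{b_{ij}(t')} = (x_k(t)x_k(t'))^{-b_{kj}(t)} \cdot \prod_i x_i(t)^{b_{ij}(t)} \cdot \prod_{i \neq k} x_i(t)^{\sgn(b_{ik}(t))[b_{ik}(t)b_{kj}(t)]_+}.
\]

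The substantive bookkeeping lies in the last product: checking case-by-case on the sign of $b_{kj}(t)$, one sees that it equals $\bigl(\prod_i x_i(t)^{[\epsilon b_{ik}(t)]_+}\bigr)^{b_{kj}(t)}$ with $\epsilon = \sgn(b_{kj}(t))$, because the terms $[b_{ik}b_{kj}]_+$ are nonzero exactly when $b_{ik}$ and $b_{kj}$ share a sign. Combined with the coefficient rule \eqref{nnyrecurrence} --- which contributes exactly the factor $p^\epsilon_k(t)^{b_{kj}(t)}$ --- and with the matching factorization of $x_k(t)x_k(t')$ above, the powers of $p^\epsilon_k(t)$ and of $\prod_i x_i(t)^{[\epsilon b_{ik}(t)]_+}$ cancel cleanly. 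What survives is $\hat{y}_j(t) \cdot \hat{y}_k(t)^{[b_{kj}(t)]_+}(\hat{y}_k(t)+1)^{-b_{kj}(t)}$, noting that the $\hat{y}_k^{-1}$ in the $\epsilon = +$ factorization produces the $\hat{y}_k^{b_{kj}} = \hat{y}_k^{[b_{kj}]_+}$ factor when $b_{kj} \geq 0$, and no such factor appears when $b_{kj} \leq 0$ (in which case $[b_{kj}]_+ = 0$).

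The main obstacle is purely notational: keeping track of which sign convention ($\pm$) is being used at each step and verifying the sign-based equality for $\prod_{i\neq k} x_i(t)^{\sgn(b_{ik})[b_{ik}b_{kj}]_+}$; no conceptually new ideas are required beyond repeated use of $[a]_+ - [-a]_+ = a$.
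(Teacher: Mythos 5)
Your computation is correct and complete: both the $j=k$ case and the two factored forms of the exchange relation check out, the regrouping of $\prod_i x_i(t')^{b_{ij}(t')}$ via the inserted factor $x_k(t)^{-b_{kj}(t)}x_k(t)^{b_{kj}(t)}$ is valid, and the sign analysis showing $\prod_{i\neq k} x_i(t)^{\sgn(b_{ik})[b_{ik}b_{kj}]_+} = \bigl(\prod_i x_i(t)^{[\epsilon b_{ik}]_+}\bigr)^{b_{kj}}$ with $\epsilon = \sgn(b_{kj})$ is exactly the right bookkeeping. Note that the paper itself offers no proof of this proposition --- it is quoted from \cite[Proposition 2.9]{CATSII} --- so your argument is a self-contained verification of a cited result rather than an alternative to anything in the text; it follows the standard direct substitution one would find in the reference.
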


The propagation rule \eqref{yrecurrencewplus} takes place in $\mcf_{>0}$, and only depends on the $B$ matrix. 

%%%%%%%%%%%%%%%%%%%%%%%%%%%%%%%%%%%%%%%%%%

\medskip

The preceding discussion is what we will need for Section \ref{SeedOrbitsSecn}. We briefly recall a few more definitions which will be useful in presenting our examples. First, the \emph{exchange graph} $\mathbf{E}$ associated with a seed pattern $\mce$ is the graph whose vertices are the \emph{unlabeled seeds} in $\mce$, and whose edges correspond to mutations between these seeds. More precisely, permuting the indices $[1,n]$ in a non-normalized seed commutes with the mutation rules \eqref{brecurrence} through \eqref{nnxrecurrence}. The exchange graph is the $n$-regular graph obtained by identifying vertices $t_1,t_2 \in \bbt_n$ if the  seeds $\Sigma(t_1)$ and  $\Sigma(t_2)$ are permutations of each other. The \emph{star neighborhood} $\starr(t)$ of a vertex $t \in \mathbf{E}$ is the set of $n$ edges adjacent to it. Rather than being indexed by $[1,n]$, the data in an unlabeled seed~$\Sigma(t)$ for $t \in \mathbf{E}$ is indexed by the $n$ seeds adjacent to~$\Sigma(t)$, i.e. by the elements of $\starr(t)$.  

Second, in the concrete examples in this paper, we have chosen a distinguished finite set of elements called \emph{frozen variables}, and the coefficient group $\bfp$ is the free abelian multiplicative group of Laurent monomials in these frozen variables. The \emph{cluster algebra}~$\mca$ associated with the seed pattern~$\mce$ is the~$\bbz$-algebra generated by the frozen variables and all  of the cluster variables arising in the seeds of~$\mce$.

\begin{example}\label{GrandBandsExchangeGraphs}
We now introduce a pair of affine algebraic varieties $\bfx$ and $\bfy$ and a pair of seed patterns in their respective fields of rational functions. The cluster algebras associated with these seed patterns are the coordinate rings $\bbc[\bfx]$ and $\bbc[\bfy]$. Both cluster algebras are of finite Dynkin type $A_2$.

Let $\bfx = \tGr(3,5)$ be the affine cone over the Grassmann manifold of $3$-dimensional planes in $\bbc^5$. The points in $\bfx$ are the decomposable tensors $\{x \wedge y \wedge z \colon x,y,z \in \bbc^5\} \subset \Lambda^3(\bbc^5)$. Its coordinate ring is generated by the \emph{Pl\"ucker coordinates} $\Delta_{ijk}$ for $1 \leq i < j < k \leq 5$, extracting the coefficient of $e_i \wedge e_j \wedge e_k$ in $x \wedge y \wedge z$, where $e_1,\dots,e_5$ is the standard basis for $\bbc^5$. 
%Thus, $\Delta_{ijk}$ is the maximal minor occupying columns $i,j$ and $k$ of a generically chosen $3 \times 5$ matrix. 
Representing a given $x \in \tGr(3,5)$ by a $3 \times 5$ matrix, $\Delta_{ijk}(x)$ is the maximal minor of this matrix in columns $i,j,$ and $k$. 

There is a well known cluster structure on $\bbc[\bfx]$ \cite{CAI,CAII}. It is a special case of a cluster structure for arbitrary Grassmannians constructed by Scott \cite{Scott}. The frozen variables are the Pl\"ucker coordinates consisting of cyclically consecutive columns
\begin{equation}
\Delta_{123}, \Delta_{234}, \Delta_{345}, \Delta_{145}, \Delta_{125} \label{FrozenXs}. \\   
\end{equation}

There are five cluster variables, listed in \eqref{ClusterXs} with cyclically adjacent pairs of cluster variables forming clusters 
\begin{equation}
\Delta_{245}, \Delta_{235}, \Delta_{135}, \Delta_{134}, \Delta_{124}.  \label{ClusterXs}   
\end{equation}
The clusters and exchange relations are given in Figure \ref{GrThreeFiveFig}. All of the other data in the seed pattern can be determined from these. For example, focusing on the seed whose cluster is $(x_1,x_2) = (\Delta_{235}, \Delta_{245})$, from the first and fifth exchange relations in Figure \ref{GrThreeFiveFig} follows
\begin{align}
(p_1^+,p^-_1,p_2^+,p_2^-) &= (\Delta_{125}\Delta_{234},\Delta_{123},\Delta_{145},\Delta_{345}\Delta_{125}) \\
(\hat{y}_1, \hat{y}_2) &=(\displaystyle{\frac{\Delta_{125}\Delta_{234}}{\Delta_{123}\Delta_{245}}, \frac{\Delta_{145}\Delta_{235}}{\Delta_{345}\Delta_{125}}}).
\end{align}
The exchange relations are written so that mutating is moving clockwise in the exchange graph. If a mutation moves counterclockwise, one should swap the order of the two terms in the exchange relation.

\begin{figure}[ht]
%\begin{multicols}{2}
\begin{tabular}{ll}
\begin{tikzpicture}[scale = .5]
\def  \rsize{3.25}; 

\foreach \s in {1,...,5}
{
 \node at ({72*(-\s)+162}:\rsize) {$\bullet$};
 \draw ({72*(-\s)+162}:\rsize) -- ({72*(-\s+1)+162}:\rsize);
}

\node at (90: 1.2*\rsize) {$ (\Delta_{235},\Delta_{245})$};
\node at (25 :1.5*\rsize) {$(\Delta_{135},\Delta_{235})$};
\node at (-52: 1.3*\rsize) {$(\Delta_{134}, \Delta_{135})$};
\node at (-140: 1.5*\rsize) {$(\Delta_{124}, \Delta_{134})$};
\node at (-210: 1.4*\rsize) {$(\Delta_{245}, \Delta_{124})$};
\end{tikzpicture}
& 
{
$\begin{aligned}
\Delta_{245} \Delta_{135} &= \Delta_{145} \Delta_{235}+\Delta_{125}\Delta_{345} \\[.1cm] 
\Delta_{235} \Delta_{134} &= \Delta_{234} \Delta_{135}+\Delta_{123}\Delta_{345} \\[.1cm] 
\Delta_{135} \Delta_{124} &= \Delta_{125} \Delta_{134}+\Delta_{123}\Delta_{145} \\[.1cm] 
\Delta_{134} \Delta_{245} &= \Delta_{345} \Delta_{124}+\Delta_{123}\Delta_{345} \\[.1cm] 
\Delta_{124} \Delta_{235} &= \Delta_{123} \Delta_{245}+\Delta_{125}\Delta_{234}
\end{aligned}
$
}

\end{tabular}
%\end{multicols}
\caption{The exchange graph for $\bbc[\bfx]$. The vertices are clusters and edges between vertices are mutations. Each mutation exchanges two cluster variables via an exchange relation listed in the table at top right. The extra data in each seed can be inferred from these exchange relations. 
\label{GrThreeFiveFig}}
\end{figure}

Second, let $\bfy \cong \bbc^9$ the affine space of \emph{band matrices} of the form
\begin{equation}
y =  \begin{pmatrix}
y_{1,1} &y_{1,2} & y_{1,3}& 0& 0 \\
0 & y_{2,2} & y_{2,3} & y_{2,4}& 0 \\
0 &0 & y_{3,3} &  y_{3,4} & y_{3,5}
\end{pmatrix}.\end{equation}
Its coordinate ring $\bbc[\bfy]$ contains the \emph{minors} $Y_{I,J}$. Evaluating $Y_{I,J}$ on $y \in \bfy$ returns the minor of $y$ occupying rows $I$ and columns $J$, e.g. $Y_{i,j}(y) = y_{i,j}$ and $Y_{12,23}(y) = y_{1,2}y_{2,3}-y_{1,3}y_{2,2}$. Some of these minors factor, e.g. $Y_{12,13} = Y_{1,1}Y_{2,3}$. 

Figure \ref{BandThreeFiveFig} shows a seed pattern whose cluster algebra is $\bbc[\bfy]$. The frozen variables are the following minors
\begin{equation}
Y_{1,1}, Y_{2,2}, Y_{3,3} , Y_{1,3},Y_{2,4},Y_{3,5}, Y_{123,234} \label{FrozenYs}. \\
\end{equation}

The cluster variables are listed in \eqref{ClusterYs}, with cyclically adjacent pairs forming clusters
\begin{equation}
Y_{1,2}, Y_{12,23},Y_{2,3}, Y_{23,34}, Y_{3,4}\label{ClusterYs}.     
\end{equation}  

%As can be checked by hand, this is a cluster algebra of type $A_2$: the frozen variables are in \eqref{FrozenYs}, and the cluster variables are in \eqref{ClusterYs}, with cyclically adjacent pairs forming clusters. Figure \ref{BandThreeFiveFig} gives the exchange graph and exchange relations, mirroring Figure \ref{GrThreeFiveFig}.  

\begin{figure}[ht]
\begin{center}
\begin{tabular}{ll}
\begin{tikzpicture}[scale = .5]
\def  \rsize{3.4}; 
\foreach \s in {1,...,5}
{
 \node at ({72*(-\s)+162}:\rsize) {$\bullet$};
 \draw ({72*(-\s)+162}:\rsize) -- ({72*(-\s+1)+162}:\rsize);
}

\node at (95: 1.2*\rsize) {$(Y_{12,23}, Y_{1,2})$};
\node at (30: 1.35*\rsize) {$(Y_{2,3}, Y_{12,23})$};
\node at (-60: 1.2*\rsize) {$(Y_{23,34}, Y_{2,3})$};
\node at (-140: 1.6*\rsize) {$(Y_{3,4}, Y_{23,34})$};
\node at (-205: 1.4*\rsize) {$(Y_{1,2}, Y_{3,4})$};
\end{tikzpicture}

&
{$
\begin{aligned}
Y_{1,2}Y_{2,3}     &= Y_{12,23}+Y_{2,2}Y_{1,3}  \\[.1cm]
Y_{12,23}Y_{23,34} &= Y_{123,234} Y_{2,3}+Y_{2,2}Y_{3,3}Y_{1,3}Y_{2,4} \\[.1cm]
Y_{2,3}Y_{3,4}     &= Y_{23,34}+Y_{3,3}Y_{2,4} \\[.1cm]
Y_{23,34}Y_{1,2}   &= Y_{2,2}Y_{1,3}  Y_{3,4}+Y_{123,234} \\[.1cm]
Y_{3,4}Y_{12,23}   &= Y_{3,3}Y_{2,4} Y_{1,2}+Y_{123,234} 
\end{aligned}
$
}
\end{tabular}
\caption{The exchange graph and exchange relations for $\bbc[\bfy]$, mirroring Figure \ref{GrThreeFiveFig}. \label{BandThreeFiveFig}}
\end{center}
\end{figure}
\end{example}

\section{Seed orbits}\label{SeedOrbitsSecn}
We introduce seed orbits by first describing them as equivalence classes under a certain equivalence relation on seeds. Proposition \ref{equivalentseedscondition} gives another characterization as orbits under an explicit rescaling action. 

\begin{defn}
Let $\vec{k} = (k_1,\dots,k_\ell)$ be a sequence of elements of $[1,n]$. Choosing a base point $t_0 \in \bbt_n$, such a sequence determines a walk $t_0 \xrightarrow {k_1} t_1 \xrightarrow{k_2} \cdots \xrightarrow{k_\ell} t_\ell $ in $\bbt_n$. We say that~$\vec{k}$ is \emph{contractible} if this walk starts and ends at the same vertex of $\bbt_n$, i.e. $t_\ell = t_0$. 

Given non-normalized seeds $\Sigma$ and $\Sigma^*$, we write $\Sigma \sim \Sigma^*$ if there is a contractible sequence of mutations from $\Sigma$ to $\Sigma^*$, i.e. a contractible sequence $\vec{k}$ and non-normalized seeds $\Sigma_1,\dots,\Sigma_{\ell-1}$ such that  
\begin{equation}\label{equivalentseedsequence}
\Sigma = \Sigma_0 \overset{\mu_{k_1}}{\leftrightsquigarrow} \Sigma_1 \overset{\mu_{k_2}}{\leftrightsquigarrow} \Sigma_2 \cdots \Sigma_{\ell-1} \overset{\mu_{k_\ell}}{\leftrightsquigarrow} \Sigma_{\ell} = \Sigma^*.
\end{equation}
\end{defn}

Clearly, $\sim$ is an equivalence relation on non-normalized seeds. Furthermore, it removes the ambiguity present in mutation of non-normalized seeds:  
\begin{lem}\label{equivalenceclassesmutate}
The mutation rule $\overset{\mu_k}{\leftrightsquigarrow}$ becomes unambiguous and involutive once it is thought of as a rule on equivalence classes of seeds under $\sim$. That is, fixing a $\sim$-equivalence class $\mfs$ and a direction $k \in [n]$, the set of seeds 
\begin{equation}
\{ \Sigma' \colon  \Sigma' \overset{\mu_k}{\leftrightsquigarrow} \Sigma \text{ for some } \Sigma \in \mfs \} 
 \end{equation}
is again a $\sim$-equivalence class of seeds. 
\end{lem}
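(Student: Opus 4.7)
The plan is to verify two assertions about the set in the lemma, which I will denote $T_k(\mfs)$: first, that any two elements of $T_k(\mfs)$ are $\sim$-equivalent, and second, that $T_k(\mfs)$ is closed under $\sim$-equivalence. Both rest on a single combinatorial observation about the labeled tree $\bbt_n$: since each vertex has a unique $k$-neighbor, if $\vec{j}$ is any contractible sequence based at $t_0 \in \bbt_n$ and $t_{-1}$ denotes the $k$-neighbor of $t_0$, then the sandwich $(k, \vec{j}, k)$ traces the closed walk $t_{-1} \xrightarrow{k} t_0 \xrightarrow{j_1} \cdots \xrightarrow{j_m} t_0 \xrightarrow{k} t_{-1}$ and is therefore again contractible.

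For the first assertion, I would take two elements $\Sigma_1', \Sigma_2' \in T_k(\mfs)$, with witnessing $k$-mutations $\Sigma_i \overset{\mu_k}{\leftrightsquigarrow} \Sigma_i'$ for some $\Sigma_1, \Sigma_2 \in \mfs$. Concatenating a contractible walk realizing $\Sigma_1 \sim \Sigma_2$ with the two $k$-edges at its endpoints produces, via the sandwich observation, a contractible walk that certifies $\Sigma_1' \sim \Sigma_2'$.

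For the second assertion, suppose $\Sigma_1' \in T_k(\mfs)$ is witnessed by $\Sigma_1 \in \mfs$, and that $\Sigma_2' \sim \Sigma_1'$. I would first produce \emph{some} seed $\Sigma_2$ with $\Sigma_2 \overset{\mu_k}{\leftrightsquigarrow} \Sigma_2'$. Such a $\Sigma_2$ always exists: the matrix and cluster of $\Sigma_2$ are forced by $\Sigma_2'$ via \eqref{brecurrence}, \eqref{nnxrecurrenceatj} and \eqref{nnxrecurrence}, while \eqref{nnyrecurrenceatk} and \eqref{nnyrecurrence} only constrain the ratios $p_j^+(\Sigma_2)/p_j^-(\Sigma_2)$ for $j \neq k$, a condition any choice of $\mathbf{p}$-tuple can meet. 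The sandwich $\Sigma_2 \overset{\mu_k}{\leftrightsquigarrow} \Sigma_2' \sim \Sigma_1' \overset{\mu_k}{\leftrightsquigarrow} \Sigma_1$ is then contractible, so $\Sigma_2 \sim \Sigma_1 \in \mfs$ and hence $\Sigma_2' \in T_k(\mfs)$. Involutivity is then automatic: the length-two sequence $(k,k)$ is itself contractible, so $T_k(T_k(\mfs)) = \mfs$. The only genuine obstacle is the existence step for $\Sigma_2$; once that is in hand, everything else is formal bookkeeping with contractible walks.
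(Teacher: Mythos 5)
Your proof is correct; the paper states this lemma without proof, treating it as immediate from the definition of $\sim$, and your argument (the sandwich observation that $(k,\vec{j},k)$ is again contractible, plus the existence of a $k$-mutation partner for any seed) is precisely the intended unwinding of that definition. The only point you elide is that the forced cluster of $\Sigma_2$ is again algebraically independent and freely generates $\mcf$ over $\bbq\bfp$ — this is the standard fact that an exchange relation is a birational change of variables, so the gap is harmless.
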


We now characterize $\sim$-equivalence classes explicitly. We say two elements $z,x \in \mcf$ are \emph{proportional}, written $z \asymp x$, if  $\frac{z}{x} \in \bfp$. We emphasize that $\bfp$ does not include constants, e.g. $-1,2 \notin \bfp$, and thus $x$ is not proportional to $-x, 2x$, etc. 

\begin{prop}[Seed orbits]\label{equivalentseedscondition} Let $\Sigma = (B,\mathbf{p},\mathbf{x}), \Sigma^* = (B^*,\mathbf{p}^*,\mathbf{x}^*)$ be non-normalized seeds in $\mcf$, of rank $n \geq 2$, with $\mathbf{x} = (x_i),\mathbf{p} = (p^\pm_i), \mathbf{x}^* = (x^*_i),\mathbf{p}^* = ((p^*)^\pm_i)$. 
%\footnote{when $n=1$, non-normalized mutation is unambiguous}.
Then the following are equivalent: 
\begin{enumerate}
\item $\Sigma \sim \Sigma^*$.
\item $B = B^*, \hat{\mathbf{y}}(\Sigma) = \hat{\mathbf{y}}(\Sigma^*)$, and $x_i \asymp x^*_i$ for all $i$.  
\item $B = B^*$, and there exist scalars $c_1,\dots,c_n,d_1,\dots,d_n \in \bfp$, such that 
\begin{align}
x^*_j &= \frac{x_j}{c_j} \label{rescalingxs} \\
(p^*)^\pm_j &= \frac{p^{\pm}_j}{d_j} \prod c_i^{[\pm b_{ij}]_+}  \label{rescalingps}.
\end{align}
\end{enumerate}
\end{prop}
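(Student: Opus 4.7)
My plan is to establish $(2) \Leftrightarrow (3)$ directly, then close the cycle via $(1) \Rightarrow (2)$ and $(3) \Rightarrow (1)$. The equivalence $(2) \Leftrightarrow (3)$ reduces to algebraic manipulation of \eqref{hattedvarsdefn}. Given $(2)$ I set $c_j := x_j/x_j^* \in \bfp$ (using $x_j \asymp x_j^*$) and $d_j := p_j^+/\bigl((p^*)_j^+ \prod_i c_i^{[b_{ij}]_+}\bigr) \in \bfp$; the companion formula for $(p^*)_j^-$ then follows by rearranging $\hat y_j = \hat y_j^*$ and using the identity $[b_{ij}]_+ - [-b_{ij}]_+ = b_{ij}$. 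Conversely, given $(3)$, substituting the rescaling formulas into \eqref{hattedvarsdefn} shows that the $c_i$-factors contributed by $\mathbf{p}^*$ and $\mathbf{x}^*$ cancel, leaving $\hat y_j^* = \hat y_j$, while $x_i \asymp x_i^*$ is immediate from $c_i \in \bfp$.

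For $(1) \Rightarrow (2)$, both the matrix mutation rule \eqref{brecurrence} and the $\hat y$-mutation rule \eqref{yrecurrencewplus} of Proposition \ref{nnyhatsmutate} are deterministic and involutive, so any contractible walk in $\bbt_n$ returns $B$ and $\hat{\mathbf{y}}$ to their starting values. For the proportionality $x_i \asymp x_i^*$ I would argue by induction on walk length, relying on two observations: (i) the only ambiguity in a single mutation step is the freedom to rescale each pair $(p_j^+(t'), p_j^-(t'))$ for $j \neq k$ by a common element of $\bfp$, which is a special case of the rescaling action in $(3)$; and (ii) this rescaling action commutes with further mutations, so two rescaling-equivalent seeds can be mutated to produce two rescaling-equivalent outcomes (with matching choices). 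Together these imply that any two executions of a contractible walk yield rescaling-equivalent end seeds; since one can exhibit specific rescaling choices --- for instance, those that make each backtracking step exactly reverse the immediately preceding step --- under which the end seed equals $\Sigma$, every execution produces a rescaling of $\Sigma$, and in particular each $x_i^*$ is a $\bfp$-multiple of $x_i$.

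For $(3) \Rightarrow (1)$, the rescalings $(c, d) \in \bfp^{2n}$ form an abelian group acting on the set of seeds with fixed $B$, and the rescalings realizable by contractible walks form a subgroup; my task reduces to exhibiting generators. A direct analysis shows that the double mutation $(k, k)$ leaves $\mathbf{x}$ and $p_k^\pm$ fixed but allows arbitrary independent $\bfp$-rescalings of each pair $(p_j^+, p_j^-)$ with $j \neq k$, thereby realizing all rescalings with $c = 1$. For nontrivial $c$-rescalings, I would use four-step walks $(k, j, j, k)$ with $j \neq k$: an explicit computation using the exchange relations \eqref{nnxrecurrence} shows that a $\bfp$-rescaling of $p_k^\pm$ introduced during the intermediate $\mu_j$-mutations propagates, upon completing the final $\mu_k$-step, to a rescaling of $x_k$ by an arbitrary element of $\bfp$; this works regardless of whether $b_{jk} = 0$, so the walk $(k, j, j, k)$ can rescale $x_k$ even when $k$ is an isolated vertex of $B$ (the assumption $n \geq 2$ guaranteeing the existence of such a $j$). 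The main obstacle is the combinatorial bookkeeping here: tracking exactly how the rescaling choices at each step cascade through the exchange relations, and verifying that composing walks of the two types $(k, k)$ and $(k, j, j, k)$ decouples the $c$ and $d$ coordinates to span all of $\bfp^{2n}$.
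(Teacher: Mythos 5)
Your proposal is correct and takes essentially the same route as the paper: $(2)\Leftrightarrow(3)$ by direct calculation; $(1)\Rightarrow(2)$ by the fact that the rescaling action commutes with mutation, so any execution of a contractible walk stays in the same $\bfp^n\times\bfp^n$-orbit as the anchor execution that simply retraces and returns $\Sigma$; and $(3)\Rightarrow(1)$ by producing the generators $\hat{d}_j(a)\cdot\Sigma$ and $\hat{c}_j(a)\cdot\Sigma$ via explicit contractible walks, using $n\ge 2$ to have a spare index available.

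The one genuine variation is the choice of walk for the $\hat{c}$-rescalings. The paper uses a length-$6$ walk $(k,k,j,k',k',j)$: first realize $\hat{d}_j(a^{-1})\cdot\Sigma$ via a double mutation, then $\mu_j$ converts the $\hat{d}_j$ into a $\hat{c}_j$-factor (plus an extra $\hat{d}_j$, cleaned up by another double mutation), then $\mu_j$ returns home. Your length-$4$ walk $(k,j,j,k)$ sandwiches the $\hat{d}_k$-rescaling (produced by the inner $(j,j)$) between the two $\mu_k$'s; the final $\mu_k$ then rescales $x_k$ as you describe. However, as you suspect, your walk alone produces $\hat{c}_k(a)\hat{d}_k(a)\cdot\Sigma$ (up to extra $\hat{d}_\ell$'s with $\ell\ne k$, which you may absorb into the freedom in the last step), not $\hat{c}_k(a)\cdot\Sigma$ exactly; the residual $\hat{d}_k$ factor still needs a cleanup via a double mutation $(\ell,\ell)$ with $\ell\ne k$. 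So the combinatorial bookkeeping you flag as the main obstacle really does require this extra composition, and once it is noted both walks achieve the same thing in roughly the same number of steps. The underlying mechanism --- that $\mu_j$ applied to a $\hat{d}_j$-rescaled seed converts the $\hat{d}_j$ into a $\hat{c}_j$, as in the paper's equation \eqref{nnxrecurrenceredoneend} --- is identical in both arguments.
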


Equations \eqref{rescalingxs} and \eqref{rescalingps} define a \emph{rescaling action} of $\bfp^n \times \bfp^n$ on non-normalized seeds, denoted by $(\vec{c},\vec{d}) \cdot \Sigma$ where $(\vec{c},\vec{d}) \in \bfp^n \times \bfp^n$ and $\Sigma$ is a non-normalized seed. Proposition \ref{equivalentseedscondition} says that a $\sim$-equivalence class of non-normalized seeds is precisely a $\bfp^n \times \bfp^n$ orbit under this action; we henceforth refer to these equivalence classes as \emph{seed orbits}.

\begin{proof} 
Conditions (2) and (3) are a re-translation of each other by immediate calculation.

We show (1) implies (2). Defining a seed orbit by $(2)$, this implication follows from the fact that seed orbits are ``closed under mutation.'' More precisely, if $\Sigma$ and $\Sigma^\dagger = (\vec{c},\vec{d}) \cdot \Sigma$ are in the same seed orbit and $\Sigma'$ and  $(\Sigma^\dagger)'$ are two seeds satisfying  $\Sigma \overset{\mu_{k}}{\leftrightsquigarrow} \Sigma'$ and $\Sigma^\dagger \overset{\mu_{k}}{\leftrightsquigarrow} (\Sigma^\dagger)'$, then $\Sigma'$ and $(\Sigma^\dagger)'$ are in the same seed orbit. By \eqref{brecurrence} and Proposition \ref{nnyhatsmutate}, we know that $B' = (B^\dagger)'$ and $\hat{\mathbf{y}}' = (\hat{\mathbf{y}}^\dagger)'$, so the claim will follow if we check $(x^\dagger)'_j \asymp x'_j$ for all $j$. This is obvious when $j \neq k$ from \eqref{nnxrecurrenceatj}. When $j= k$, \eqref{nnxrecurrence} for the mutation $\Sigma^\dagger \overset{\mu_{k}}{\leftrightsquigarrow} (\Sigma^\dagger)'$ says that 
\begin{align}\label{nnxrecurrenceredone}
(x^\dagger)'_k &= (x_k^\dagger)^{-1}((p^\dagger)^+_k \prod (x^\dagger_j)^{[b_{jk}]_+} +  (p^\dagger)^-_k \prod (x^\dagger_j)^{[-b_{jk}]_+}) \\
&=(x_k^\dagger)^{-1}(p^\dagger)^-_k(1+\hat{y}_k(\Sigma^\dagger)) \prod (x^\dagger_j)^{[-b_{jk}]_+} \\
&= \frac{c_k} {d_k} (x_k)^{-1}  p^-_k(1+\hat{y}_k(\Sigma)) \prod x_j^{[-b_{jk}]_+} \\ 
&= \frac{c_k} {d_k} x_k', \label{nnxrecurrenceredoneend}
\end{align}
as desired. Returning to the implication $(1) \Rightarrow (2)$, from the symmetry of $\overset{\mu_k}{\leftrightsquigarrow}$ it follows that $\Sigma$ is related to itself along any contractible sequence $\vec{k}$. Since seed orbits are closed under mutation, any seed $\Sigma^*$ related to $\Sigma$ by a contractible sequence of mutations is therefore in the same seed orbit as $\Sigma$.   

Now we show $(3)$ implies $(1)$. Let $\hat{c}_j(a) \in \bfp^n \times \bfp^n$ denote the vector with $c_j = a$ and all other entries equal to $1$, and define similarly $\hat{d}_j(a)$. Clearly, it suffices to show that $\Sigma \sim \hat{c}_j(a) \cdot \Sigma$ and $\Sigma \sim \hat{d}_j(a) \cdot \Sigma$, since rescalings of this type generate $\bfp^n \times \bfp^n$. 

Seeds of the form $\hat{d}_j(a) \cdot \Sigma$ are equivalent to $\Sigma$, as follows by mutating twice in any direction $k \neq j$. For seeds of the form $\hat{c}_j(a) \cdot \Sigma$, let $\Sigma'$ be any seed satisfying $\Sigma \overset{\mu_{j}}{\leftrightsquigarrow}  \Sigma'$:
\begin{align}
\Sigma &\sim \hat{d}_j(a^{-1}) \cdot  \Sigma  \label{commutecalci}  \\
 &\overset{\mu_{j}}{\leftrightsquigarrow} (\hat{c}_j(a^{-1})\hat{d}_j(a^{-1})) \cdot \Sigma'  \label{commutecalcii}\\
&\sim (\hat{c}_j(a^{-1}) ) \cdot \Sigma'  \label{commutecalciii} \\
&\overset{\mu_{j}}{\leftrightsquigarrow} \hat{c}_j(a) \cdot \Sigma, \label{commutecalciv}
\end{align}
where \eqref{commutecalcii} and \eqref{commutecalciv} follow from the calculation in \eqref{nnxrecurrenceredoneend}, and \eqref{commutecalci} and \eqref{commutecalciii} are admissible since we already know rescaling by $\hat{d}_j(a)$ preserves equivalence of seeds. Since \eqref{commutecalci} through \eqref{commutecalciv} amounts to mutating in direction $j$ twice on seed equivalence classes, it follows that $\Sigma \sim \hat{c}_j( a) \cdot \Sigma$ as desired. 
\end{proof}

\section{Quasi-homomorphisms}\label{QHSecn}
We will now give the definition of a quasi-homomorphisms from a seed pattern $\mce$ to another seed pattern $\ov{\mce}$. We retain the notation of Section \ref{PreliminariesSecn} for all the data in $\mce$, and 
we use bars to denote the analogous quantities in the second pattern $\ov{\mce}$. Thus $\ov{\mce}$ has coefficient group $\ov{\bfp}$, ambient field  $\ov{\mcf}$, seeds $\ov{\Sigma}(\ov{t}) = (\ov{B}(\ov{t}),\ov{\mathbf{p}}(\ov{t}),\ov{\mathbf{x}}(\ov{t}))$, hatted variables $\hat{\ov{y}}_j(\ov{t})$, and so on. It is built on a second copy of the $n$-regular tree, $\ov{\bbt}_n$. 

The motivating observation is the following: since the mutation rules \eqref{nnyrecurrence} through \eqref{nnxrecurrence} are certain algebraic relations in in $\mcf_{>0}$, they are preserved by a homomorphism of semifields. 

\begin{defn}[Quasi-homomorphism]\label{qhdefn} Let $\mce$ and $\ov{\mce}$ be non-normalized seed patterns. Let~$\Psi \colon \mcf_{>0} \to \ov{\mcf}_{>0}$ be a semifield homomorphism satisfying~$\Psi(\bfp) \subset \ov{\bfp}$ (in this case we say~$\Psi$ \emph{preserves coefficients}). We say~$\Psi$  
is a \emph{quasi-homomorphism from $\mce$ to $\ov{\mce}$} if it maps each seed in $\mce$ to a seed that is $\sim$-equivalent to a seed in $\ov{\mce}$, in a way that is compatible with mutation. More precisely, let $t \mapsto \ov{t}$ be an isomorphism of the labeled trees $\bbt_n$~and~$\ov{\bbt}_n$. Then $\Psi$ is a quasi-homomorphism 
if and only if 
\begin{equation}\label{qhsimdefn}
\Psi(\Sigma(t)) \sim \ov{\Sigma}(\ov{t})
\end{equation}
for all $t \in \bbt_n$, where $\Psi(\Sigma(t))= (B(t),\Psi(\mathbf{p}),\Psi(\mathbf{x}))$ is the triple obtained by evaluating $\Psi$ on $\Sigma(t)$. 
\end{defn}

As motivation for this definition, we imagine a situation where $\mce$ is well understood combinatorially, and we would like to understand  another seed pattern $\ov{\mce}$ by comparing it with $\mce$. The requirement \eqref{qhsimdefn} says that the seeds $\Psi(\Sigma(t))$ mutate ``in parallel'' with the seeds in $\ov{\mce}$, in the sense that their corresponding seeds only differ by the rescalings \eqref{rescalingxs} and \eqref{rescalingps}. 

The following Propositions \ref{basicqhpropsi} and \ref{basicqhpropsii} show two ways in which quasi-homomorphisms are well-behaved. Both of their proofs follow immediately from the observation that applying a semifield homomorphism commutes with mutation.

\begin{prop}\label{basicqhpropsi}
Let $\Psi \colon \mcf_{>0} \to \ov{\mcf}_{>0}$ be a semifield homomorphism satisfying \eqref{qhsimdefn} for \emph{some} $t \in \bbt_n$. Then $\Psi$ is a quasi-homomorphism. 
\end{prop}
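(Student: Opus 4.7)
The plan is to induct on the distance in $\bbt_n$ from the vertex $t$ at which \eqref{qhsimdefn} is assumed to hold. The base case is the hypothesis, so everything reduces to propagating the $\sim$-equivalence across a single edge $t_0 \xrightarrow{k} t_1$ of $\bbt_n$ (with corresponding edge $\ov{t}_0 \xrightarrow{k} \ov{t}_1$ of $\ov{\bbt}_n$), under the inductive assumption that $\Psi(\Sigma(t_0)) \sim \ov{\Sigma}(\ov{t}_0)$.

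First I would formalize the guiding observation: applying a coefficient-preserving semifield homomorphism $\Psi \colon \mcf_{>0} \to \ov{\mcf}_{>0}$ commutes with mutation of non-normalized seeds. Concretely, inspecting the mutation rules \eqref{brecurrence}--\eqref{nnxrecurrence}, the matrix mutation does not involve elements of $\mcf$ at all, the coefficient recurrence \eqref{nnyrecurrenceatk}--\eqref{nnyrecurrence} is a subtraction-free identity in $\bfp$, and the exchange relation \eqref{nnxrecurrence} uses only $+$ and $\times$ in $\mcf_{>0}$. Hence $\Sigma(t_0) \overset{\mu_k}{\leftrightsquigarrow} \Sigma(t_1)$ implies $\Psi(\Sigma(t_0)) \overset{\mu_k}{\leftrightsquigarrow} \Psi(\Sigma(t_1))$, where the latter mutation takes place in $\ov{\mcf}_{>0}$ with coefficients in $\ov{\bfp}$.

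Now I would invoke Lemma~\ref{equivalenceclassesmutate}: mutation in direction $k$ is an unambiguous, well-defined operation on $\sim$-equivalence classes of seeds. On one hand, $\Psi(\Sigma(t_0)) \overset{\mu_k}{\leftrightsquigarrow} \Psi(\Sigma(t_1))$, so the seed orbit of $\Psi(\Sigma(t_1))$ is the $k$-mutation of the seed orbit of $\Psi(\Sigma(t_0))$. On the other hand, $\ov{\Sigma}(\ov{t}_0) \overset{\mu_k}{\leftrightsquigarrow} \ov{\Sigma}(\ov{t}_1)$ by definition of $\ov{\mce}$, so the orbit of $\ov{\Sigma}(\ov{t}_1)$ is the $k$-mutation of the orbit of $\ov{\Sigma}(\ov{t}_0)$. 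Since the inductive hypothesis identifies these two starting orbits, the two resulting orbits coincide, yielding $\Psi(\Sigma(t_1)) \sim \ov{\Sigma}(\ov{t}_1)$. Propagating this across each edge of $\bbt_n$ by a walk from $t$ establishes \eqref{qhsimdefn} at every vertex.

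The only potential subtlety is ensuring that $\Psi$ really does convert the mutation relation in $\mce$ into a mutation relation in the ambient semifield of $\ov{\mce}$ with coefficients correctly landing in $\ov{\bfp}$; this is exactly what the assumption $\Psi(\bfp) \subset \ov{\bfp}$ and the subtraction-free nature of the recurrences guarantee. No new ideas are needed beyond Lemma~\ref{equivalenceclassesmutate}; the proposition is essentially a structural consequence of seed orbits being the correct unit on which the non-normalized mutation rules become functorial under coefficient-preserving semifield maps.
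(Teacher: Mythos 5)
Your argument is correct and is exactly the paper's intended proof: the paper disposes of this proposition (together with Proposition \ref{basicqhpropsii}) with the one-line remark that applying a semifield homomorphism commutes with mutation, and your write-up simply makes explicit the induction along $\bbt_n$ and the appeal to Lemma \ref{equivalenceclassesmutate} that this remark tacitly relies on.
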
 

That is, rather than checking that \eqref{qhsimdefn} holds at \emph{every} $t \in \bbt_n$, it suffices to check this at a single $t \in \ov{\bbt}_n$.

\begin{prop}\label{basicqhpropsii}
Let $\Psi$  be a quasi-homomorphism from $\mce$ to $\ov{\mce}$. Let $\Sigma$ be a seed in $\mce$, and let $\Sigma^*$ be a non-normalized seed satisfying $\Sigma \sim \Sigma^*$. Then $\Psi(\Sigma) \sim \Psi(\Sigma^*) $. 
\end{prop}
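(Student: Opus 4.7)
The plan is to invoke the explicit characterization of seed orbits given in Proposition~\ref{equivalentseedscondition}(3). Since $\Sigma \sim \Sigma^*$, there exist $\vec{c}, \vec{d} \in \bfp^n$ realizing $\Sigma^*$ as the rescaling $(\vec{c}, \vec{d}) \cdot \Sigma$ via the formulas \eqref{rescalingxs}--\eqref{rescalingps}. Because $\Psi \colon \mcf_{>0} \to \ov{\mcf}_{>0}$ is a semifield homomorphism with $\Psi(\bfp) \subset \ov{\bfp}$, it preserves both the multiplicative structure of the rescaling identities and the target coefficient group. Applying $\Psi$ term-by-term to \eqref{rescalingxs}--\eqref{rescalingps} then yields
\[ \Psi(\Sigma^*) = (\Psi(\vec{c}), \Psi(\vec{d})) \cdot \Psi(\Sigma), \]
where $\Psi(\vec{c}), \Psi(\vec{d}) \in \ov{\bfp}^n$ and the action is the rescaling action on non-normalized seeds in $\ov{\mcf}$. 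The underlying exchange matrices agree trivially, both equalling $B$.

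The one subtlety is that Proposition~\ref{equivalentseedscondition} only speaks of genuine non-normalized seeds, so I need to check that $\Psi(\Sigma)$ (and hence its rescaling $\Psi(\Sigma^*)$) actually is one, i.e. that its cluster is algebraically independent and freely generates $\ov{\mcf}$ over $\bbq\ov{\bfp}$. This is ensured by the hypothesis that $\Psi$ is a quasi-homomorphism: by Definition~\ref{qhdefn} we have $\Psi(\Sigma) \sim \ov{\Sigma}(\ov{t})$ for some seed of $\ov{\mce}$, so by Proposition~\ref{equivalentseedscondition}(3) each component of $\Psi(\mathbf{x})$ is a $\ov{\bfp}$-multiple of the corresponding cluster variable in $\ov{\mathbf{x}}(\ov{t})$, hence $\Psi(\Sigma)$ is a bona fide non-normalized seed. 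With this in hand, Proposition~\ref{equivalentseedscondition}(3) applied in $\ov{\mcf}$ delivers $\Psi(\Sigma) \sim \Psi(\Sigma^*)$, as required.

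An equivalent route, closer to the slogan ``semifield homomorphisms commute with mutation,'' would instead choose a contractible sequence \eqref{equivalentseedsequence} from $\Sigma$ to $\Sigma^*$, apply $\Psi$ seed-by-seed, and use the subtraction-free form of the mutation rules \eqref{nnyrecurrenceatk}--\eqref{nnxrecurrence} together with the invariance of $B$-matrices to verify that $\Psi(\Sigma_i) \overset{\mu_{k_{i+1}}}{\leftrightsquigarrow} \Psi(\Sigma_{i+1})$ for each $i$; this exhibits the same contractible sequence relating $\Psi(\Sigma)$ and $\Psi(\Sigma^*)$. I do not foresee any serious obstacle; the only step requiring any real care is the algebraic-independence verification above, which is immediate once one unpacks the quasi-homomorphism hypothesis.
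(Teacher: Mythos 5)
Your proposal is correct. The paper disposes of Propositions \ref{basicqhpropsi} and \ref{basicqhpropsii} in one line, via exactly the route you sketch at the end: apply $\Psi$ to a contractible mutation sequence \eqref{equivalentseedsequence} and observe that, since the mutation rules \eqref{nnyrecurrenceatk}--\eqref{nnxrecurrence} are subtraction-free relations in $\mcf_{>0}$, a coefficient-preserving semifield homomorphism sends seeds related by $\overset{\mu_k}{\leftrightsquigarrow}$ to seeds related by $\overset{\mu_k}{\leftrightsquigarrow}$. Your primary argument is a genuinely different (and arguably cleaner) decomposition: it bypasses the mutation sequence entirely by invoking the static characterization of seed orbits in Proposition~\ref{equivalentseedscondition}(3) and noting that the rescaling identities \eqref{rescalingxs}--\eqref{rescalingps} are purely multiplicative, hence transported verbatim by $\Psi$ into $\ov{\mcf}_{>0}$. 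What this buys is that you never need to re-verify that intermediate seeds $\Psi(\Sigma_i)$ in the sequence are legitimate seeds; what it costs is reliance on the full strength of Proposition~\ref{equivalentseedscondition} (including its $n \geq 2$ hypothesis and the seed axioms for $\Psi(\Sigma)$), whereas the paper's route needs only the easy direction that orbits are closed under mutation. Your care over the algebraic-independence of $\Psi(\mathbf{x})$ is well placed, though one can also observe that Definition~\ref{qhdefn} already presupposes $\Psi(\Sigma(t))$ is a non-normalized seed, since $\sim$ is only defined on such triples.
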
 

Proposition \ref{basicqhpropsii} says that quasi-homomorphism preserves $\sim$-equivalence of seeds. Thus, if $\mfs(t)$ denotes the seed orbit of $\Sigma(t)$ and ditto for $\ov{\mfs}(\ov{t})$ and $\ov{\Sigma}(\ov{t})$, then $\Psi$ maps $\mfs(t)$ inside~$\ov{\mfs}(\ov{t})$ for all $t$. A quasi-homomorphism is therefore a natural notion of homomorphism between the respective seed orbit patterns $(t, \mfs(t))$ and $(\ov{t},\ov{\mfs}(\ov{t}))$.

Now we describe a quasi-homomorphism between the pair of seed patterns in Example~\ref{GrandBandsExchangeGraphs}. 

\begin{example}\label{GrandBandsQH} Given $Y \in \bfy$, let $Y[1],Y[2],Y[3] \in \bbc^5$ denote its rows. There is a surjective map of varieties $F \colon \bfy \to \bfx$ sending $Y \overset{F}{\mapsto} Y[1] \wedge Y[2] \wedge Y[3]$. It determines a map on cluster algebras $F^* \colon \bbc[\bfx] \to \bbc[\bfy]$ sending $\Delta_{ijk} \mapsto Y_{123,ijk}$. Figure \ref{NNBandThreeFiveFig} shows the non-normalized seed pattern that arises from applying $F^*$  to Figure \ref{GrThreeFiveFig} and factoring the cluster variables inside $\bbc[\bfy]$. The seeds in Figure 
\ref{NNBandThreeFiveFig} are in the same seed orbit as the corresponding seeds in Figure \ref{BandThreeFiveFig}, and thus $F^*$ is a quasi-homomorphism from $\bbc[\bfx]$ to $\bbc[\bfy]$. 

\begin{figure}[ht]
%\begin{tabular}{c}
\begin{tikzpicture}[scale = .5]
\def  \rsize{4}; 
\foreach \s in {1,...,5}
{
 \node at ({72*(-\s)+162}:\rsize) {$\bullet$};
 \draw ({72*(-\s)+162}:\rsize) -- ({72*(-\s+1)+162}:\rsize);
}

\node at (90: 1.2*\rsize) {$ (Y_{3,5} Y_{12,23},Y_{2,4}Y_{3,5} Y_{1,2})$};
\node at (17 :1.9*\rsize) {$(Y_{1,1} Y_{3,5} Y_{2,3},Y_{3,5}Y_{12,23})$};
%\node at (10 :1.6*\rsize) {$Y_{3,5}Y_{12,23})$};

\node at (-25: 1.9*\rsize) {$(Y_{1,1} Y_{23,34},Y_{1,1} Y_{3,5} Y_{2,3})$};
%\node at (-35: 1.8*\rsize) {$Y_{1,1} Y_{3,5} Y_{2,3})$};

\node at (-155: 1.9*\rsize) {$(Y_{1,1}Y_{2,2}Y_{3,4}, Y_{1,1} Y_{23,34})$};
%\node at (-135: 1.6*\rsize) {$Y_{1,1} Y_{23,34})$};

\node at (164: 2.0*\rsize) {$(Y_{2,4}Y_{3,5} Y_{1,2},Y_{1,1}Y_{2,2}Y_{3,4} )$};
%\node at (-185: 1.7*\rsize) {$Y_{1,1}Y_{2,2}Y_{3,4} )$};
\end{tikzpicture}

\[
\begin{aligned} 
Y_{2,4}Y_{3,5}Y_{1,2} \cdot Y_{1,1} Y_{3,5} Y_{2,3} &= Y_{1,1}Y_{3,5}^2Y_{2,4}(Y_{12,23}+Y_{2,2}Y_{1,3}) \\[.1cm] 
Y_{3.5}Y_{12,23} \cdot Y_{1,1} Y_{23,34}            &= Y_{1,1}Y_{3,5} (Y_{123,234} Y_{2,3}+Y_{2,2}Y_{3,3}Y_{1,3}Y_{2,4}) \\[.1cm] 
Y_{1.1} Y_{3.5} Y_{2,3} \cdot Y_{1.1}Y_{2.2}Y_{3,4} &= Y_{1.,1}^2 Y_{2,2}Y_{35} (Y_{23,34}+Y_{3,3}Y_{24}) \\[.1cm] 
Y_{1.1} Y_{23,34} \cdot Y_{24}Y_{3.5}Y_{1,2}        &= Y_{1.1}Y_{24}Y_{3,5} (Y_{2,2}Y_{1,3}  Y_{3,4}+Y_{123,234}) \\[.1cm] 
Y_{1.1}Y_{2.2}Y_{3,4} \cdot Y_{35}Y_{12,23}         &= Y_{1.1}Y_{22}Y_{3,5}(Y_{3,3}Y_{2,4} Y_{1,2}+Y_{123,234})
\end{aligned}
\]

\caption{
The non-normalized seed pattern obtained by applying $F^*$ to the seed pattern in Figure \ref{GrThreeFiveFig}. The clusters agree with the clusters in Figure \ref{BandThreeFiveFig} up to the frozen variables listed in \eqref{FrozenYs}. Cancelling the common frozen variable factors from both sides of the exchange relations yields the exchange relations in Figure \ref{BandThreeFiveFig}. It follows that the $\hat{y}$ values are the same in both figures.   
\label{NNBandThreeFiveFig}
}
\end{figure}
\end{example}

\begin{defn}\label{proportionality}
Two quasi-homomorphisms $\Psi_1,\Psi_2$ from $\mce$ to $\ov{\mce}$ are called \emph{proportional} if $\Psi_1(\Sigma) \sim \Psi_2(\Sigma)$ for all seeds $\Sigma$ in $\mce$. We say a quasi-homomorphism $\Psi$ from $\mce$ to $\ov{\mce}$ is a \emph{quasi-isomorphism} if there is a quasi-homomorphism $\Phi$ from $\ov{\mce}$ to $\mce$ such that $\Phi \circ \Psi$ is proportional to the identity map on $\mcf_{>0}$. We say that $\Psi$ and $\Phi$ are \emph{quasi-inverses} of one another.    
\end{defn}

Once we have a quasi-isomorphism between two seed patterns, we think of them as being essentially ``the same.'' Up to coefficients, the maps in both directions allows us to write the cluster variables in one seed pattern in terms of the cluster variables in the other one. 

\begin{rmk}\label{QHcategory} The set of seed patterns with quasi-homomorphisms as morphisms is a category. Proportionality is an equivalence relation on the morphisms in this category, and this equivalence relation respects composition of quasi-homomorphisms. This yields a quotient category whose objects are seed patterns and whose morphisms are proportionality classes of quasi-homomorphisms. A morphism in this quotient category is an isomorphism if and only if one (hence any) of its constituent quasi-homomorphisms is a quasi-isomorphism. 
\end{rmk}

The following lemma provides a simple method for checking a candidate map is a quasi-inverse of a given quasi-homomorphism. 

\begin{lem}\label{constructQIs}
Let $\Psi$ % \colon \mcf_{>0} \to \ov{\mcf}_{>0}$ 
be a quasi-homomorphism from $\mce$ to $\ov{\mce}$, and $\Phi \colon \ov{\mcf}_{>0} \to \mcf_{>0}$ a semifield map that preserves coefficients and for which  
$\Phi \circ \Psi(x) \asymp x$ for all cluster variables $x$ in $\mce$. Then $\Psi$ and $\varphi$ are quasi-inverse quasi-isomorphisms.  
\end{lem}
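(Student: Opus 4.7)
The plan is to establish, in order: (i) $\Phi\circ\Psi$ is proportional to the identity on $\mce$; (ii) $\Phi$ is a quasi-homomorphism from $\ov{\mce}$ to $\mce$; and (iii) $\Psi\circ\Phi$ is proportional to the identity on $\ov{\mce}$, so that $\Psi$ and $\Phi$ are quasi-inverses. Fix once and for all a basepoint $t_0\in\bbt_n$ with image $\ov{t}_0 \in \ov{\bbt}_n$.

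Step (i) is the technical core of the argument, and the part I expect to be hardest. Using criterion (2) of Proposition \ref{equivalentseedscondition}, showing $\Phi\Psi(\Sigma(t_0))\sim \Sigma(t_0)$ reduces to verifying that the exchange matrices agree (trivial), that each $\Phi\Psi(x_j(t_0))$ is proportional to $x_j(t_0)$ (given by hypothesis), and that $\Phi\Psi(\hat{y}_k(t_0)) = \hat{y}_k(t_0)$ for every $k$. The hypothesis gives no direct control over how $\Phi\Psi$ acts on coefficients, so my plan for this last condition is to extract that information from the exchange relation. Writing $\Phi\Psi(x_i(t_0)) = e_i\, x_i(t_0)$ with $e_i \in \bfp$, and similarly $\Phi\Psi(x'_k) = e'_k\, x'_k$ for the cluster variable $x'_k$ adjacent to $\Sigma(t_0)$ in direction $k$ (also covered by the hypothesis), I apply $\Phi\Psi$ to the exchange relation
\[
x_k x'_k = P^+ + P^-, \qquad P^{\pm} = p^{\pm}_k\prod_j x_j(t_0)^{[\pm b_{jk}]_+},
\]
and compare with $e_k e'_k(P^+ + P^-)$. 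Since $P^+$ and $P^-$ are $\bfp$-multiples of distinct monomials in the algebraically independent variables $x_j(t_0)$, they are linearly independent, which forces $\Phi\Psi(p^+_k)\prod_j e_j^{[b_{jk}]_+} = e_k e'_k\, p^+_k$ and analogously for $p^-_k$. Taking the ratio eliminates $e_k e'_k$ and yields $\Phi\Psi(p^+_k/p^-_k) = (p^+_k/p^-_k)\prod_j e_j^{-b_{jk}}$, which plugged into the definition of $\hat{y}_k$ gives exactly $\Phi\Psi(\hat{y}_k(t_0)) = \hat{y}_k(t_0)$.

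For step (ii), I first observe that any semifield homomorphism preserving coefficients carries $\sim$-related triples to $\sim$-related triples, since the rescaling action \eqref{rescalingxs}--\eqref{rescalingps} from characterization (3) of Proposition \ref{equivalentseedscondition} is transported under such a map. Applying this to the relation $\Psi(\Sigma(t_0))\sim\ov{\Sigma}(\ov{t}_0)$ and combining with step (i) yields $\Phi(\ov{\Sigma}(\ov{t}_0)) \sim \Sigma(t_0)$. The cluster of $\Phi(\ov{\Sigma}(\ov{t}_0))$ consists of $\bfp$-unit multiples of $\mathbf{x}(t_0)$, hence is algebraically independent over $\bbq\bfp$ and freely generates $\mcf$, so $\Phi(\ov{\Sigma}(\ov{t}_0))$ is a bona fide seed; then Proposition \ref{basicqhpropsi} upgrades this single verification at $\ov{t}_0$ to the full quasi-homomorphism property of $\Phi$. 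For step (iii), I apply $\Psi$---which preserves $\sim$ by Proposition \ref{basicqhpropsii}, being a quasi-homomorphism---to $\Phi(\ov{\Sigma}(\ov{t}_0))\sim\Sigma(t_0)$ to obtain $\Psi\Phi(\ov{\Sigma}(\ov{t}_0)) \sim \Psi(\Sigma(t_0)) \sim \ov{\Sigma}(\ov{t}_0)$; a short induction on mutation distance using Lemma \ref{equivalenceclassesmutate} propagates this single-seed proportionality across all of $\ov{\mce}$, completing the proof.
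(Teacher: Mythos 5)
Your argument is correct, and its skeleton differs from the paper's only in that it is self-contained. The paper obtains Lemma \ref{constructQIs} as a corollary of the nerve criterion, Proposition \ref{checkqhonnerves}, applied to a star neighborhood: $\Phi\circ\Psi$ satisfies the proportionality hypotheses there, and so does $\Phi$ itself because $\Phi(\ov{x}_j(\ov{t}))\asymp\Phi\Psi(x_j(t))\asymp x_j(t)$. Your step (i) re-derives precisely the computation inside the proof of that proposition---apply the semifield map to an exchange relation and match the two terms by algebraic independence of the cluster---but in a cleaner special case: since $\Phi\Psi(\Sigma(t_0))$ carries the matrix $B(t_0)$ and a cluster coordinatewise proportional to $\mathbf{x}(t_0)$, both sides are supported on the \emph{same} monomials $\prod_j x_j^{[\pm b_{jk}]_+}$, so the ``Case 2'' (opposite-seed) ambiguity of Proposition \ref{checkqhonnerves} cannot arise. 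Your steps (ii) and (iii) make explicit bookkeeping that the paper leaves implicit (that $\Phi$ is a quasi-homomorphism via Proposition \ref{basicqhpropsi}, and that both composites are proportional to identities). One shared caveat: matching $P^+$ with $P^-$ needs the two monomials to be distinct, i.e.\ the $k$-th column of $B(t_0)$ to be nonzero; Proposition \ref{checkqhonnerves} excludes the degenerate case by its indecomposability hypothesis, which Lemma \ref{constructQIs} does not state, so you should either import that hypothesis or treat an isolated index $k$ separately.
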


Lemma \ref{constructQIs} follows from the more general Proposition \ref{checkqhonnerves} below. In fact, it will suffice to merely check that $\varphi \circ \Psi(x) \asymp x$ for all $x$ lying on a \emph{nerve} (cf.~Definition \ref{NerveDefn}).  

\begin{example}\label{GrandBandsQI} 
Using Lemma \ref{constructQIs} we describe a quasi-inverse $G^*$ for the quasi-~homomorphism $F^*$ from Example \ref{GrandBandsQH}. Let $G \colon \bfx \to \bfy$ be the morphism sending $X \in \bfx$ to the the band matrix
$$G(X) =  \begin{pmatrix}
\Delta_{145}(X) & \Delta_{245}(X) & \Delta_{345}(X) & 0 & 0 \\
0& \Delta_{125}(X) & \Delta_{135}(X) & \Delta_{145}(X) &  0 \\
0& 0 & \Delta_{123}(X) & \Delta_{124}(X) &  \Delta_{125}(X) \\
\end{pmatrix} \in \bfy$$
all of whose entries are Pl\"ucker coordinates of $X$. The coordinate ring map $\bbc[\bfy] \to \bbc[\bfx]$ sends $Y_{i,j}$ to the Pl\"ucker coordinate in the $(i,j)$ entry of $G(X)$, e.g. $G^*(Y_{1,2}) = \Delta_{245}$. 

The matrix $G(X)$ has an interesting property: all of its minors are monomials in the Pl\"ucker coordinates of $X$. In particular, its maximal minors agree with those of $X$, up to a multiplicative factor:
\begin{equation}\label{FGcomp}
\Delta_{ijk}(G(X)) = \Delta_{145}(X) \Delta_{125}(X) \Delta_{ijk}(X).
\end{equation} 

Thus, $G^* \circ F^*(\Delta_{ijk}) = \Delta_{145}\Delta_{125}\Delta_{ijk} \asymp \Delta_{ijk}$ for each cluster variable $\Delta_{ijk}$. Since $G^*$ preserves coefficients (the only nontrivial check is $G^*(Y_{123,234}) = \Delta_{125}\Delta_{145}\Delta_{234}$), from Lemma \ref{constructQIs} it follows that $G^*$ is a quasi-inverse of $F^*$. 
\end{example}

\begin{rmk}\label{largersubring}
A quasi-homomorphism is defined as a map on ambient semifields since these maps transparently preserve the mutation rules \eqref{brecurrence}--\eqref{nnxrecurrence}. This should be suitable for most purposes, since one is mostly interested in evaluating a quasi-homomorphism on cluster variables or coefficients. However, the cluster algebra~$\mca$ is the more familiar algebraic object associated to a seed pattern. If one wants to think of a quasi-homomorphism $\Psi$ as an algebra map of cluster algebras $\mca \to \ov{\mca}$, one will sometimes need to first localize at the frozen variables in $\ov{\mca}$. 
\end{rmk}

We close this section by explaining the differences between quasi-homomorphisms and preexisting notions of homomorphisms between cluster algebras. Specifically, we consider the notion of a \emph{rooted cluster morphism} in the category of cluster algebras described by Assem, Dupont, and Shiffler \cite{ADS}, and also of a \emph{coefficient specialization} defined by Fomin and Zelevinsky \cite{CAIV} and studied by Reading \cite{Reading,ReadingSurfaces}. The key difference between these notions and quasi-homomorphisms is that a quasi-homomorphism allows for cluster variables to be \emph{rescaled} by an element of $\ov{\bfp}$. This extra flexibility provides more freedom in constructing new cluster algebras from old ones (cf.~Section \ref{NormalizedQHs}) or in finding nice self-maps of cluster algebras giving rise to elements of the cluster modular group (cf.~Section~\ref{QASecn}).

In a little more detail, a coefficient specialization is a map whose underlying map on coefficients can be any group homomorphism $\bfp \to \ov{\bfp}$, but that must send each cluster variable to a cluster variable. Thus, each coefficient specialization is a quasi-homomorphism, but a very special one since cluster variables are not allowed to be rescaled by elements of $\ov{\bfp}$. Rooted cluster morphisms require choosing a pair of initial seeds in $\mce$ and $\ov{\mce}$ (this is the sense in which the morphism is rooted). Between this pair of seeds, a morphism is an algebra map that sends each cluster variable to either a cluster variable or an integer, and sends each frozen variable to either a frozen variable, a cluster variable, or an integer. Hence, while quasi-homomorphisms are more flexible in allowing for cluster variables to be rescaled and for frozen variables to be sent to \emph{monomials} in the frozen variables, they are also less flexible as they do not allow for unfreezing frozen variables or specializing variables to integers. It probably would not be hard to combine these two notions.    

One more technicality: to streamline the discussion, we have formulated Definition \ref{qhdefn} so that quasi-homomorphisms preserve exchange matrices, whereas rooted cluster morphisms allow for $B \mapsto -B$. To make Definition \ref{qhdefn} more consonant with these preexisting notions, one could modify \eqref{qhsimdefn} to say that either $\Psi(\Sigma(t)) \sim \Sigma(t)$ or $\Psi(\Sigma(t)) \sim \Sigma(t)^{\text{opp}}$ (see the definition of \emph{opposite seed} in Section \ref{NervesSecn} below) without any significant changes.

\section{Normalized seed patterns}\label{NormalizedQHs}
In this section, we recall the definition of normalized seed patterns and apply the results of Section \ref{SeedOrbitsSecn} in the case that $\mce$ and $\ov{\mce}$ are normalized. 

\begin{defn}[Normalized seed pattern]\label{normalizedpatterndefn}
A seed pattern $\mce$ as in Definition \ref{nnseedpatterndefn} is called \emph{normalized} if the coefficient group $\bfp$ is a semifield, and 
each coefficient tuple $\mathbf{p}(t)$ satisfies 
\begin{equation}\label{normalized}
p^+_j(t) \oplus p^-_j(t) = 1 \text{ for all $j$},
\end{equation}
where $\oplus$ is the addition in $\bfp$. 
\end{defn}

The advantage of this normalization condition is that it makes the mutation rule \eqref{nnyrecurrence}, and therefore mutation of normalized seeds, unambiguous.  Indeed, \eqref{nnyrecurrence} specifies the ratio $y_j(t') = \frac{p^+_j(t')}{p^-_j(t')}$ in terms of $B(t)$ and $\mathbf{p}(t)$, and there is a unique choice of $p^\pm_j(t') \in \bfp$ with this ratio and satisfying the normalization condition, namely the pair   
\begin{equation}\label{ysandps}
p^+_j(t') = \frac{y_j(t')}{1 \oplus y_j(t')} \text{ and } p^-_j(t') = \frac{1}{1 \oplus y_j(t')}. 
\end{equation}
Furthermore, mutating twice in a given direction is the identity. 

At the same time, the disadvantage is that computing a cluster algebra now involves three operations, the two operations present in $\mcf_{>0}$ along with $\oplus$ in $\bfp$. The definition of quasi-homomorphism prioritizes these first two operations. Proposition \ref{qhnormalizedprop} says that in the case of a quasi-homomorphism between normalized seed patterns, there is a ``separation of additions'' phenomenon, separating the addition in $\ov{\mcf_{>0}}$ from the one in $\ov{\bfp}$.

\medskip
Before stating Proposition \ref{qhnormalizedprop}, we say a little more about normalized seed patterns and~\emph{$Y$-patterns}. In a normalized seed pattern, the tuple of ratios $(y_1(t),\dots,y_n(t))$ determines the coefficient tuple by \eqref{ysandps}. Accordingly, for normalized seed patterns one keeps track of $y_j(t)$ rather than $p^\pm_j(t)$. 
Rewriting \eqref{nnyrecurrenceatk} and \eqref{nnyrecurrence} in terms of $y_j(t)$  determines a \emph{$Y$-pattern recurrence in the semifield $\bfp$}:
\begin{equation}\label{yrecurrence}
y_{j}(t') = 
\begin{cases}
y_j(t)^{-1} & \text{ if } j = k  \\
y_j(t) y_k(t)^{[b_{kj}(t)]_+}(y_k(t)\oplus 1)^{-b_{kj}(t)} & \text{ if } j \neq k. 
\end{cases}
\end{equation}

A collection of quantities $(B(t),\mathbf{y}(t))_{t \in \bbt_n}$ satisfying \eqref{brecurrence} and \eqref{yrecurrence}, with the $\mathbf{y}(t)$ lying in some semifield $\mcs$, is called a \emph{Y-pattern in the semfield $\mcs$}. Notice that the concept of semifield is now playing two different roles, either as the ambient semifield $\mcf_{>0}$ in which the exchange relation calculations take place, or as the coefficient semifield $\bfp$ used to remove the ambiguity in mutation of seeds. The surprising connection between these two roles is Lemma \ref{nnyhatsmutate}, which we now recognize as saying that the $(B(t),\mathbf{\hat{y}}(t))$ form a~$Y$-pattern in the ambient semfield $\mcf_{>0}$.

The most important example of a coefficient semifield arising in applications is the \emph{tropical semifield}. 
\begin{defn}
A \emph{tropical semifield} is a free abelian multiplicative group in some generators $u_1,\dots,u_m$, with auxiliary addition $\oplus$ given by 
$$\prod_{j} u_j^{a_j} \oplus \prod_{j} u_j^{b_j} = \prod_{j} u_j^{\min (a_j,b_j)}.$$
\end{defn}

A normalized seed pattern over a tropical semifield is said to be  \emph{of geometric type}. When this is the case, denoting the frozen variables by $x_{n+1},\dots,x_{n+m}$, the data of $(B(t),\mathbf{y}(t))$ is entirely described by an $(n+m) \times n$ matrix $\tilde{B}(t) = (b_{ij}(t))$, called the \emph{extended exchange matrix}. Its top $n \times n$ submatrix is $B(t)$ and is called the \emph{principal part}. Its bottom $m$ rows are called \emph{coefficient rows}. They are specified from the equality $\displaystyle y_j(t) = \prod_{i=1}^m x_{n+i}^{b_{n+i,j}(t)}$.  The mutation rule \eqref{yrecurrence} translates into the rule \eqref{brecurrence} on $\tilde{B}$.

\medskip
The seed pattern in Figure \ref{GrThreeFiveFig} is of geometric type over the tropical semifield in the frozen variables in \eqref{FrozenXs}. The same holds for the seed pattern in Figure \ref{BandThreeFiveFig} over the frozen variables in \eqref{FrozenYs}. On the other hand, the seed pattern in Figure \ref{NNBandThreeFiveFig} is not normalized, e.g. the first exchange relation there satisfies $p^+_2 \oplus p^-_2 = Y_{1,1}Y_{2,4}Y_{3,5}$. 
\medskip

Now we state Proposition \ref{qhnormalizedprop} describing quasi-homomorphisms between normalized seed patterns $\mce$ and $\ov{\mce}$. It arose during the process of writing a forthcoming book on cluster algebras \cite{CABook}, in proving one direction of the finite type classification (namely, that a cluster algebra with a quiver whose principal part is an orientation of a Dynkin quiver necessarily has only finitely many seeds).  We will state it as a recipe for \emph{constructing} a normalized seed pattern from a given one, since we envision this being useful in applications.

\begin{prop}\label{qhnormalizedprop}
Let $\mce$ be a non-normalized seed pattern, with the usual notation.  Let $(x_i) = (x_i(t_0))$ be a fixed initial cluster in $\mce$. Let $\ov{\bfp}$ be a semifield, and $\ov{\mcf}_{>0}$ the semifield of subtraction-free rational expressions in algebraically independent elements $\ov{x}_1,\dots,\ov{x}_n$ with coefficients in $\ov{\bfp}$.

Let $\Psi \colon \mcf_{>0} \to \ov{\mcf}_{>0}$ be a semifield map satisfying $\Psi(x_i) \asymp \ov{x}_i$, and let $c \colon \mcf_{>0} \to \ov{\bfp}$ be the composition of semifield maps $\mcf_{>0} \xrightarrow{\Psi} \ov{\mcf}_{>0} \xrightarrow{\ov{x}_i \mapsto 1} \ov{\bfp}$ where the second map in this composition specializes all $\ov{x}_i$ to  $1$ and is the identity on $\ov{\bfp}$.  

Then there is a normalized seed pattern $\ov{\mce}$ in $\ov{\mcf}_{>0}$ with seeds $(\ov{B}(\ov{t}),\ov{\mathbf{p}}(\ov{t}), \ov{\mathbf{x}}(\ov{t}))$ satisfying 
\begin{align}
\ov{B}(\ov{t}) &= B(t) \label{seedhomtopatternhomeqsi} \\
\ov{x}_i(\ov{t}) &= \frac{\Psi(x_i(t)) }{c(x_i(t))}  \label{seedhomtopatternhomeqsiv}\\
\hat{\ov{y}}_i(\ov{t}) & = \Psi(\hat{y}_i(t)) \label{seedhomtopatternhomeqsiii}\\
\ov{y}_i(\ov{t}) &= c(\hat{y}_i(t)) \label{seedhomtopatternhomeqsii}.
\end{align}
Clearly, $\Psi$ is a quasi-homomorphism from $\mce$ to $\ov{\mce}$.
\end{prop}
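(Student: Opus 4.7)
The plan is to construct $\ov{\mce}$ directly by prescribing its seed at each $\ov{t} \in \ov{\bbt}_n$ via the target formulas
\[
\ov{B}(\ov{t}) := B(t), \qquad \ov{x}_i(\ov{t}) := \Psi(x_i(t))/c(x_i(t)), \qquad \ov{y}_i(\ov{t}) := c(\hat{y}_i(t)),
\]
and then taking $\ov{p}^{\pm}_i(\ov{t})$ to be the unique normalized pair with ratio $\ov{y}_i(\ov{t})$ as dictated by \eqref{ysandps}. The base case at $\ov{t}_0$ is immediate: the hypothesis $\Psi(x_i) \asymp \ov{x}_i$ means $\Psi(x_i) = \alpha_i \ov{x}_i$ for some $\alpha_i \in \ov{\bfp}$, whence $c(x_i) = \alpha_i$ and the defining formula yields $\ov{x}_i(\ov{t}_0) = \ov{x}_i$, giving an algebraically independent generating cluster. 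The remaining content is to check that the prescribed data actually satisfies the matrix, $Y$, and $X$ mutation rules of a normalized seed pattern; once this is done, \eqref{seedhomtopatternhomeqsi}, \eqref{seedhomtopatternhomeqsiv}, and \eqref{seedhomtopatternhomeqsii} hold by construction.

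The matrix mutation of $\ov{B}$ is automatic. The $Y$-pattern mutation of $\ov{y}_i$ uses the key observation that $c \colon \mcf_{>0} \to \ov{\bfp}$ is a semifield homomorphism: by Proposition \ref{nnyhatsmutate} the $\hat{y}_i(t)$ satisfy the recurrence \eqref{yrecurrencewplus} in $\mcf_{>0}$, and applying $c$ turns this into \eqref{yrecurrence} in $\ov{\bfp}$ (since $c$ sends $+$ to $\oplus$). I expect the crux to be the $X$-exchange relation. Applying $\Psi$ to the exchange relation for $x_k$ in $\mce$ gives
\[
\Psi(x_k(t))\,\Psi(x_k(t')) = \Psi(p^+_k)\,M_+ + \Psi(p^-_k)\,M_-, \qquad M_{\pm} := \prod_j \Psi(x_j(t))^{[\pm b_{jk}]_+}.
\]
Dividing through by $c(x_k(t))\,c(x_k(t'))$ and writing $\prod \ov{x}_j(\ov{t})^{[\pm b_{jk}]_+} = M_{\pm}/N_{\pm}$ with $N_{\pm} := \prod c(x_j(t))^{[\pm b_{jk}]_+}$ reduces the normalized exchange relation at $\ov{t}$ to the two identities $\ov{p}^{\pm}_k = \Psi(p^{\pm}_k)\,N_{\pm}/(c(x_k(t))\,c(x_k(t')))$. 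A short calculation using $c = \Psi$ on $\bfp$ shows their ratio equals $c(\hat{y}_k(t)) = \ov{y}_k(\ov{t})$, so the $Y$-ratio is correct. For normalization I would apply $c$ itself to the same exchange relation in $\mce$: because $c$ carries $+$ in $\mcf_{>0}$ to $\oplus$ in $\ov{\bfp}$, we obtain $c(x_k(t))\,c(x_k(t')) = \Psi(p^+_k)\,N_+ \oplus \Psi(p^-_k)\,N_-$, which forces $\ov{p}^+_k \oplus \ov{p}^-_k = 1$. Hence these are the canonical normalized coefficients from \eqref{ysandps} and the exchange relation closes.

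Finally, \eqref{seedhomtopatternhomeqsiii} follows from expanding $\hat{\ov{y}}_i(\ov{t}) = \ov{y}_i(\ov{t}) \prod \ov{x}_j(\ov{t})^{b_{ji}(t)}$ and substituting the definitions: the factors $c(x_j(t))^{b_{ji}(t)}$ in the denominators cancel against the same factors hidden in $c(\hat{y}_i(t)) = c(y_i(t)) \prod c(x_j(t))^{b_{ji}(t)}$, and using $c = \Psi$ on $\bfp$ leaves $\Psi(y_i(t)) \prod \Psi(x_j(t))^{b_{ji}(t)} = \Psi(\hat{y}_i(t))$. Combining this with $\ov{x}_i(\ov{t}) \asymp \Psi(x_i(t))$ and $\ov{B}(\ov{t}) = B(t)$ and invoking condition (2) of Proposition \ref{equivalentseedscondition} yields $\Psi(\Sigma(t)) \sim \ov{\Sigma}(\ov{t})$, so $\Psi$ is a quasi-homomorphism. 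The main obstacle is the bookkeeping in the $X$-exchange verification; the conceptual content is the dual role of $c$, which simultaneously resolves the ambiguity of rescaling $x$'s (through $\Psi$, valued in $\ov{\mcf}_{>0}$) and of normalizing $y$'s (through the specialization, valued in $\ov{\bfp}$).
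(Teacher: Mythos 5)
Your proof is correct and follows the second route the paper itself indicates---the direct verification of the recurrences---which the paper merely asserts is ``straightforward'' and defers to \cite{CABook}: your base case, the application of the semifield map $c$ to the $\hat{y}$-recurrence \eqref{yrecurrencewplus} to get \eqref{yrecurrence}, and the division of the $\Psi$-image of the exchange relation by $c(x_k(t))c(x_k(t'))$ (with the normalization $\ov{p}^+_k \oplus \ov{p}^-_k = 1$ obtained by applying $c$ to that same relation) are exactly the computations needed. The only point worth flagging is that both you and the statement implicitly assume $\Psi(\bfp)\subset\ov{\bfp}$ (so that $c=\Psi$ on $\bfp$ and the new exchange-relation coefficients land in $\ov{\bfp}$), which is clearly the intended reading since the conclusion asserts $\Psi$ is a quasi-homomorphism.
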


\begin{proof}
Formulas \eqref{seedhomtopatternhomeqsiv} through \eqref{seedhomtopatternhomeqsii} follow by applying \cite[Proposition $3.4$]{CATSII}) to $\Psi(\mce)$, renormalizing by the scalars $c(x_i)$, and massaging the formulas given there. Alternatively, it is also straightforward to check the right hand sides of \eqref{seedhomtopatternhomeqsiv} through \eqref{seedhomtopatternhomeqsii} satisfy the required recurrences directly (this is carried out in \cite{CABook}).   
\end{proof}

\begin{example} Beginning with the seed pattern in Figure \ref{GrThreeFiveFig}, one can construct the normalized seed pattern in Figure \ref{BandThreeFiveFig} by first applying the semifield map $F^*$ -- obtaining the non-normalized seed pattern in Figure \ref{NNBandThreeFiveFig}-- and then normalizing by a semifield map~$c \colon \mcf_{>0} \to \ov{\bfp}$. This map $c$ agrees with $F^*$ on frozen variables and sends a cluster variable $x$ to the frozen variable monomial dividing $F^*(x)$, e.g. $c(\Delta_{235}) = Y_{3,5}$ and $c(\Delta_{245}) = Y_{2,4}Y_{3,5}$.  
\end{example}

When both $\mce$ and $\ov{\mce}$ are of geometric type, constructing a quasi-homomorphism that sends one seed into (the seed orbit of) another seed is a matter of linear algebra: 
\begin{cor}\label{geomtypeseedhom}
Let $\Sigma = (\tilde{B},\{x_1,\dots,x_n\})$ and $\ov{\Sigma} = (\ov{\tilde{B}},\{\ov{x}_1,\dots,\ov{x}_n \})$ be seeds of geometric type, with frozen variables  
$x_{n+1},\dots,x_{n+m}$  and $\ov{x}_{n+1},\dots,\ov{x}_{n+\ov{m}}$ respectively. Let $\mce$ and $\ov{\mce}$ be the respective seed patterns. 

Let $\Psi$ be a quasi-homomorphism from $\mce$ to $\ov{\mce}$ such that $\Psi(\Sigma) \sim \ov{\Sigma}$. It determines a monomial map from the $x_i$ to the $\ov{x}_i$. Let $M_\Psi$ denote the matrix of exponents of this monomial map, thus $M_\Psi$ is an $(n+\ov{m}) \times (n+m)$ matrix satisfying $\displaystyle \Psi(x_k) = \prod_{i=1}^{n+{\ov{m}}}\ov{x}_i^{(M_\Psi)_{ik}}$. Then the extended exchange matrices $\tilde{B},\ov{\tilde{B}}$ are related by 
\begin{equation}\label{newbtilde}
\ov{\tilde{B}}= M_\Psi \tilde{B}.
\end{equation}

In particular, such a a quasi-homomorphism $\Psi$ exists if and only if the principal parts of $\tilde{B},\ov{\tilde{B}}$ agree,
and the (integer) row span of $\tilde{B}$ contains the (integer) row span of $\ov{\tilde{B}}$.
\end{cor}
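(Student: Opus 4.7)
The plan is to translate the seed-orbit condition $\Psi(\Sigma)\sim\ov\Sigma$ into a linear-algebraic identity involving the exponent matrix of the underlying monomial map, using the hatted-variable characterization of seed orbits from Proposition~\ref{equivalentseedscondition}. Since $\Psi$ preserves coefficients and $\Psi(\Sigma)\sim\ov\Sigma$ forces $\Psi(x_i)\asymp\ov x_i$ for $i\in[1,n]$, each of the algebraically independent generators $x_1,\ldots,x_{n+m}$ of $\mcf_{>0}$ maps under $\Psi$ to a Laurent monomial in the algebraically independent generators $\ov x_1,\ldots,\ov x_{n+\ov m}$ of $\ov\mcf_{>0}$, with integer exponents assembled into the matrix $M_\Psi$. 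The constraint $\Psi(\bfp)\subset\ov\bfp$ kills the top-right $n\times m$ block, while $\Psi(x_i)\asymp\ov x_i$ for $i\le n$ makes the top-left $n\times n$ block equal to $I_n$, giving the block form
\[
M_\Psi \;=\; \begin{pmatrix} I_n & 0 \\ N_1 & N_2 \end{pmatrix}.
\]

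To derive \eqref{newbtilde}, I would apply $\Psi$ to the geometric-type identity $\hat y_j = \prod_{i=1}^{n+m} x_i^{b_{ij}}$, which combines the tropical formula for $y_j$ with the cluster-variable product in \eqref{hattedvarsdefn}. Substituting $\Psi(x_i) = \prod_k \ov x_k^{(M_\Psi)_{ki}}$ and collecting exponents yields $\Psi(\hat y_j) = \prod_k \ov x_k^{(M_\Psi\tilde B)_{kj}}$. On the other hand, $\Psi(\Sigma)\sim\ov\Sigma$ forces $\Psi(\hat y_j) = \hat{\ov y}_j = \prod_k \ov x_k^{\ov b_{kj}}$ by Proposition~\ref{equivalentseedscondition}. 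Matching exponents on the algebraically independent $\ov x_k$ gives $M_\Psi\tilde B = \ov{\tilde B}$. Reading off the top $n$ rows recovers $\ov B = B$, while the bottom $\ov m$ rows exhibit each coefficient row of $\ov{\tilde B}$ as an integer combination of rows of $\tilde B$, proving the ``only if'' half of the final claim.

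For the converse, given $\ov B = B$ and the integer row-span condition, I would build $M_\Psi$ by setting its top $n$ rows to $(I_n \mid 0)$ (valid since $\ov B = B$ is the top block of $\tilde B$) and filling in its bottom $\ov m$ rows via any integer combinations witnessing the rows of $\ov{\tilde B}$ in the row span of $\tilde B$. Setting $\Psi(x_i) = \prod_k \ov x_k^{(M_\Psi)_{ki}}$ on generators, Lemma~\ref{universalsemifieldlemma} extends $\Psi$ uniquely to a semifield map $\mcf_{>0}\to\ov\mcf_{>0}$; the top-right zero block ensures coefficient preservation, the identity block gives $\Psi(x_i)\asymp\ov x_i$, and reversing the exponent calculation above produces $\Psi(\hat y_j) = \hat{\ov y}_j$. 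Invoking Proposition~\ref{equivalentseedscondition} then yields $\Psi(\Sigma)\sim\ov\Sigma$, and Proposition~\ref{basicqhpropsi} promotes $\Psi$ to a full quasi-homomorphism $\mce\to\ov\mce$.

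The main point requiring care is that the row-span condition must be interpreted over $\bbz$ rather than $\bbq$: the entries of $M_\Psi$ are exponents of a monomial map and therefore must be integers for $\Psi$ to be well defined on the freely generated semifield $\ov\mcf_{>0}$. This integrality is already built into the hypothesis, and it is the only place the geometric-type framework is essential, since algebraic independence of the $\ov x_k$ is precisely what converts the multiplicative equality $\Psi(\hat y_j)=\hat{\ov y}_j$ into an exact equality of integer exponent vectors.
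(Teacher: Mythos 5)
Your proof is correct and follows essentially the same route as the paper: identity \eqref{newbtilde} is obtained by matching exponents of the $\ov{x}_i$ in $\Psi(\hat{y}_j)=\hat{\ov{y}}_j$ (the paper cites \eqref{seedhomtopatternhomeqsiii} where you invoke Proposition \ref{equivalentseedscondition} directly, an immaterial difference), and the converse is built by prescribing the bottom $\ov{m}$ rows of $M_\Psi$ from the integer row-span containment and extending via Lemma \ref{universalsemifieldlemma}. Your explicit block decomposition of $M_\Psi$ is just a more detailed rendering of the paper's remark that the ``interesting'' rows are the bottom $\ov{m}$ ones.
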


\begin{proof}
Indeed, the $(i,j)$ entry of the left hand side of \eqref{newbtilde}
encodes the exponent of $\ov{x}_i$ in $\hat{\ov{y}}_j$, while the $(i,j)$ entry of the 
right hand side encodes the exponent of $\ov{x}_i$ in $\Psi(\hat{y}_j)$. So \eqref{newbtilde} now follows from \eqref{seedhomtopatternhomeqsiii}.

The final statement follows by studying \eqref{newbtilde}: the ``interesting'' rows of $M_\Psi$ are its bottom $\ov{m}$ rows. 
Each of these rows determines a particular linear combination of the rows of $\tilde{B}$, and these linear combinations can be prescribed arbitrarily by prescribing the exponent of $\ov{x}_i$ in $\Psi(x_j)$ for $1 \leq i \leq \ov{m}, 1 \leq j \leq n+m$ using Lemma \ref{universalsemifieldlemma}. 
\end{proof}

\begin{rmk}[Exchange graphs and separation of additions]\label{separationofadditionsrmk} 
The formulas  \eqref{seedhomtopatternhomeqsi},\eqref{seedhomtopatternhomeqsiv} and \eqref{seedhomtopatternhomeqsii} show that if there is a quasi-homomorphism from $\mce$ to $\ov{\mce}$, then the exchange graph of $\mce$ covers that of $\ov{\mce}$. In particular, by Corollary \ref{geomtypeseedhom}, if the rows of $\tilde{B}$ span $\bbz^n$, then the exchange graph for the corresponding cluster algebra $\mca(\tilde{B})$ covers the exchange graph of every other cluster algebra $\ov{\mca}$ with the same underlying exchange matrix. 

This is a natural generalization of the \emph{separation of additions formula} \cite[Theorem $3.7$]{CAIV} from the case of a quiver with principal coefficients to \emph{any} $\tilde{B}$-matrix whose rows span~$\bbz^n$. Namely, let~$\Sigma_0 = (B_0,\mathbf{y},\mathbf{x})$ and~$\ov{\Sigma}_0 =  (B_0,\ov{\mathbf{y}},\ov{\mathbf{x}})$ be a pair of normalized seeds with the same exchange matrix, and suppose~$\Sigma_0$ has principal coefficients, i.e.~$y_i = x_{n+i}$. There is a natural choice of maps~$\Psi$ mapping~$\Sigma_0$ to~$\ov{\Sigma_0}$ as in Proposition \ref{qhnormalizedprop}, defined by~$\Psi(x_i) =  \ov{x}_i$ and~$\Psi(x_{n+i}) = \ov{y}_i$. For this choice of~$\Psi$, formula~\eqref{seedhomtopatternhomeqsiv} becomes separation of additions: the numerator of~\cite[Theorem $(3.7)$]{CAIV} (evaluating the~``$X$ polynomial'' in~$\ov{\mcf}$) is applying the semifield homomorphism~$\Psi$, while the denominator (specializing the cluster variables to~$1$ and evaluating the~$X$ polynomial in~$\ov{\bfp}$) is applying the semifield map~$c$.
\end{rmk}

\begin{rmk}[Proportionality and gradings]\label{propandgradings} Let $\mce$ be a seed pattern of geometric type. We recall briefly the concept of a \emph{$\bbz^r$-grading on $\mce$} cf.~\cite{Grabowski, GrabowskiLaunois}. Choosing an initial seed $(\tilde{B},\{x_i\})$
in $\mce$, such a choice of grading is determined by a $r \times (n+m)$ grading matrix $G$ satisfying $G \tilde{B} = 0$. The $i^{\text{th}}$ column of $G$ determines the grading of $x_i$ as a vector in $\bbz^r$, for $1 \leq i \leq n+m$. The condition $G \tilde{B} = 0$ guarantees that every exchange relation \eqref{nnxrecurrence} is homogeneous with respect to this $\bbz^r$-grading; this in turn defines the multi-grading of each adjacent cluster variable and thereby each adjacent grading matrix. It can be seen that these adjacent grading matrices again satisfy the left kernel condition, so that the grading propagates to a $\bbz^r$-grading on the entire cluster algebra in which the cluster variables and coefficients are homogeneous. 

Now we suppose we are given two seeds $\mce$ and $\ov{\mce}$ of geometric type with notation as in Corollary \ref{geomtypeseedhom}. Let $\Psi_1$ and $\Psi_2$ be a pair of proportional quasi-homomorphisms of $\mce$ and~$\ov{\mce}$. We obtain as in \eqref{newbtilde} matrices $M_{\Psi_1}$ and $M_{\Psi_2}$ such that $M_{\Psi_1} \tilde{B} = M_{\Psi_2} \tilde{B} = \ov{\tilde{B}}$, which implies that $M_{\Psi_1}-M_{\Psi_2}$ defines a $\bbz^{\ov{m}}$-grading $G$ on $\mce$ (the first $n$ rows of $M_{\Psi_1}-M_{\Psi_2}$ define the trivial grading). Conversely, fixing a quasi-homomorphism $\Psi_1$ with matrix $M_{\Psi_1}$, any choice of $\bbz^{\ov{m}}$-grading matrix $G$ on $\tilde{B}$ provides a quasi-homomorphism $\Psi_2$, proportional to~$\Psi_1$, whose matrix is $M_{\Psi_2} = M_{\Psi_1} + G$. 
\end{rmk}

\begin{rmk}\label{QRowSpans}
For simplicity, we stated Corollary \ref{geomtypeseedhom} in terms of $\bbz$ row spans, but a similar statement holds for  $\bbq$ row spans. To do this, one enlarges the tropical semifield $\ov{\bfp}$ to the \emph{Puiseux tropical semifield} consisting of Puiseux monomials with rational exponents in the frozen variables. This is unpleasant from the perspective of cluster algebras as coordinate rings, but is perfectly fine if one is only interested in writing algebraic formulas for cluster variables, etc. 

%In a similar vein, it would appear that the generalization of the separation of additions formula stated in the preceding remark only holds when the second cluster algebra $\ov{\mce}$ is of geometric type. In fact, there is a workaround: Corollary \ref{geomtypeseedhom} holds even when $\ov{\bfp}$ is a semifield in \emph{infinitely many generators} $\ov{x}_{n+1},\ov{x}_{n+2},\dots$, and this immediately implies that the Corollary holds for an arbitrary choice of semifield $\ov{\bfp}$ (by choosing a surjection of multiplicative groups from the tropical semifield onto the given one). 

This is foreshadowed in the work of Sherman and Zelevinsky \cite[Section 6]{Sherman}, which discusses the coefficient-free rank 2 cluster algebra $\mca(b,c)$ with exchange matrix 
$\begin{pmatrix}
0 & a \\
-b & 0 
\end{pmatrix}
$. The authors write the cluster variables in any cluster algebra with this $B$ matrix in terms of the cluster variables for $\mca(b,c)$. Their formulas involve Puiseux monomials in the frozen variables. 
\end{rmk}

\section{Nerves}\label{NervesSecn}
By Proposition \ref{basicqhpropsi}, to check that a given semifield map $\Psi$ is a quasi-homomorphism from $\mce$ to $\ov{\mce}$, it suffices to check that $\Psi(\Sigma(t)) \sim \ov{\Sigma}(\ov{t})$ for \emph{some} pair of seeds $\Sigma(t)$ in $\mce$ and $\ov{\Sigma}(\ov{t})$. By Proposition \ref{equivalentseedscondition}, this means checking that $B(t) = B(\ov{t})$ and  $\hat{\mathbf{y}}(t) = \hat{\mathbf{y}}(\ov{t})$, and furthermore $\Psi(x_j(t)) \asymp \ov{x}_j(\ov{t})$ holds for all $j$. We envision applications where checking the proportionality condition on cluster variables is easy and can be done in \emph{many} seeds $t$, but checking the equality of exchange matrices or $\hat{\mathbf{y}}$'s is inconvenient. The goal of this section is to give a criterion that guarantees $\Psi$ is a quasi-homomorphism by only checking these proportionality conditions. The relevant concept is that of a \emph{nerve} for a seed pattern. 

\begin{defn}\label{NerveDefn}
Let $\mce$ be a seed pattern.  A \emph{nerve}~$\mcn$ for~$\bbt_n$, is a connected subgraph of~$\bbt_n$ such that every edge label~$k \in [1,n]$ arises at least once in $\mcn$.   
\end{defn}

The basic example of a nerve is the star neighborhood of a vertex. We believe that there are many theorems of the form, ``if a property holds on a nerve, then it holds on the entire seed pattern.'' We give an example of such a theorem in the appendix, generalizing the ``Starfish Lemma'' \cite[Proposition 3.6]{tensors} from a star neighborhood to a nerve.

Before stating the result of this section, we need to address the (mostly unimportant) difference between a seed and its opposite seed. We say a seed is \emph{indecomposable} if the underlying graph described by its exchange matrix (the vertex set is $[1,n]$ and vertices $i,j$ are joined by an edge if $b_{i,j} \neq 0$) is connected. For a seed $\Sigma = (B,\mathbf{p},\mathbf{x})$, the \emph{opposite seed}~$\Sigma^{\text{opp}} = (B^{\text{opp}},\mathbf{p}^{\text{opp}}, \mathbf{x}^{\text{opp}})$ is the seed defined by $B^{\text{opp}} = -B$, $(p^{\text{opp}})^\pm_j = p^\mp_j$, and $x^{\text{opp}}_i = x_i$. It satisfies~$\hat{y}^{\text{opp}}_j = \frac{1}{\hat{y}_j}$. The operations of restricting to an indecomposable component and replacing a seed by its opposite seed both commute with mutation.

\begin{prop}\label{checkqhonnerves} Let $\mce$ and $\ov{\mce}$ be non-normalized seed patterns, with respective ambient semifields $\mcf_{>0}$ and $\ov{\mcf}_{>0}$. Suppose the seeds in $\ov{\mce}$ are indecomposable.  Let $\Psi \colon \mcf_{>0} \to \ov{\mcf}_{>0}$ be a semifield homomorphism that preserves coefficients and satisfies $\Psi(x_j(t)) \asymp \ov{x}_j(\ov{t})$ for every vertex $t$ and label $j$ such that $t \xrightarrow{j} t'$ is in  $\mcn$. 
Then $\Psi$ is a quasi-homomorphism from $\mce $ to $\ov{\mce}$ or from $\mce$ to $\ov{\mce}^{\text{opp}}$. 
\end{prop}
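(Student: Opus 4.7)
The plan is to verify, at one single vertex $t_0 \in \mcn$, that either $\Psi(\Sigma(t_0)) \sim \ov{\Sigma}(\ov{t_0})$ or $\Psi(\Sigma(t_0)) \sim \ov{\Sigma}(\ov{t_0})^{\opp}$; Proposition \ref{basicqhpropsi} (applied to $\ov{\mce}$ or to $\ov{\mce}^{\opp}$) then promotes this into the desired conclusion. By Proposition \ref{equivalentseedscondition}, such a single-vertex check amounts to three items at $t_0$: (a) $\Psi(x_j(t_0)) \asymp \ov{x}_j(\ov{t_0})$ for every $j$; (b) $B(t_0) = \epsilon\, \ov{B}(\ov{t_0})$ for a single sign $\epsilon \in \{\pm 1\}$; and (c) $\Psi(\hat{y}_j(t_0)) = \hat{\ov{y}}_j(\ov{t_0})^{\epsilon}$ for every $j$, with the same $\epsilon$.

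Item (a) is established first, in strengthened form at every $t \in \mcn$ and every $j$. By \eqref{nnxrecurrenceatj}, $x_j$ is preserved under any mutation in a direction $k \neq j$, so along any path in $\mcn$ the ratio $\Psi(x_j(\cdot))/\ov{x}_j(\ov{\cdot})$ is locally constant off $j$-edges. The label-coverage of $\mcn$ provides a $j$-nerve edge at one of whose endpoints the hypothesis supplies the required proportionality, and the connectedness of $\mcn$ transports this value to every $t$.

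Next, at each nerve edge $t \xrightarrow{k} t'$, I apply $\Psi$ to the exchange relation \eqref{nnxrecurrence} and compare with the corresponding relation in $\ov{\mce}$. Writing $\Psi(x_i(t)) = c_i \ov{x}_i(\ov{t})$ with $c_i \in \ov{\bfp}$ from (a), and using $\Psi(x_k(t))\Psi(x_k(t')) = \lambda\, \ov{x}_k(\ov{t})\ov{x}_k(\ov{t'})$ for some $\lambda \in \ov{\bfp}$, one obtains an identity in $\ov{\mcf}_{>0}$ whose two sides are each a $\ov{\bfp}$-combination of two coprime Laurent monomials in the algebraically independent generators $\ov{x}_i(\ov{t})$ (coprimality uses $b_{kk} = \ov{b}_{kk} = 0$). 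Such a binomial identity forces the exponent vectors on either side to match up to possibly swapping the two terms: one extracts a sign $\epsilon(t,k) \in \{\pm 1\}$ with $b_{ik}(t) = \epsilon(t,k)\, \ov{b}_{ik}(\ov{t})$ for all $i$, and, matching the $\ov{\bfp}$-scalars, $\Psi(\hat{y}_k(t)) = \hat{\ov{y}}_k(\ov{t})^{\epsilon(t,k)}$.

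The hard step, and the main obstacle, is unifying the local signs $\epsilon(t,k)$ to a single global $\epsilon$, and then extending the column data at $t_0$ to labels $k$ not labeling a nerve edge at $t_0$. Within a single vertex $t \in \mcn$ having two incident nerve-labels $k_1, k_2$, the skew-symmetrizability of both $B(t)$ and $\ov{B}(\ov{t})$ forces $b_{k_1 k_2}$ and $b_{k_2 k_1}$ to have opposite nonzero signs (and analogously for $\ov{b}$), and comparing the column-$k_1$ and column-$k_2$ identities yields $\epsilon(t,k_1) = \epsilon(t,k_2)$ whenever $\ov{b}_{k_1 k_2}(\ov{t}) \neq 0$. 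The indecomposability hypothesis on $\ov{\mce}$ makes the indecomposability graph of $\ov{B}(\ov{t})$ connected, so all local signs at $t$ coincide; across an edge $t \xrightarrow{m} t'$ of $\mcn$, the column-$m$ sign is preserved because \eqref{brecurrence} negates column $m$ of both $B$ and $\ov{B}$. Traversing $\mcn$ and using its connectedness together with label-coverage then unifies the local signs to a common $\epsilon$. For (b) and (c) at $t_0$, a label $k$ not labeling a nerve edge at $t_0$ is handled by walking a path in $\mcn$ from $t_0$ to a nerve-$k$ vertex: at each step one propagates the column-$k$ and $\hat{y}_k$ data via \eqref{brecurrence} and \eqref{yrecurrencewplus}, using that the nerve edge traversed supplies the column corresponding to the mutation label at its source (so the mutation formula can be evaluated), and using the identities $\mu_m(-B) = -\mu_m(B)$ and the analogous behavior of \eqref{yrecurrencewplus} under $\hat{y} \leftrightarrow \hat{y}^{-1}$ to keep the global sign $\epsilon$ intact. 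This yields (b) and (c) at $t_0$, and completes the proof via Proposition \ref{equivalentseedscondition}.
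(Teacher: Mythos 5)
Your overall strategy matches the paper's: propagate the proportionality $\Psi(x_j(t)) \asymp \ov{x}_j(\ov{t})$ to all $(t,j)$ with $t \in \mcn$, compare the $\Psi$-image of each exchange relation along a nerve edge $t \xrightarrow{k} t'$ with the corresponding relation in $\ov{\mce}$ to extract a local sign $\epsilon(t,k)$ governing the $k$-th column of $B(t)$ and $\hat{y}_k(t)$, and then globalize via Propositions \ref{equivalentseedscondition} and \ref{basicqhpropsi}. Your step (a) and the binomial comparison are correct (the propagation argument for (a) is in fact more explicit than what the paper writes; note only that the relevant nondegeneracy for matching the two terms is that column $k$ of $\ov{B}(\ov{t})$ is nonzero, which comes from indecomposability, not from $b_{kk}=0$).

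The gap is in your sign unification within a vertex. The local signs $\epsilon(t,k)$ are defined only for labels $k$ of nerve edges incident to $t$, whereas the indecomposability graph of $\ov{B}(\ov{t})$ lives on all of $[1,n]$: a path in that graph joining two nerve-labels $k_1,k_2$ may pass through labels that are not nerve-labels at $t$, for which no sign exists, so connectivity of the indecomposability graph does not give $\epsilon(t,k_1)=\epsilon(t,k_2)$. (Concretely: $n=3$, $\ov{b}_{12}(\ov{t})=0$ but $\ov{b}_{13},\ov{b}_{23}\neq 0$, with $\mcn$ a path carrying labels $1,2,3$ in succession.) Your step (iii) cannot repair this, because propagating the $k$-part across an edge $u \xrightarrow{m} u'$ preserves a sign only when the $k$-part and the $m$-part match with the \emph{same} sign at $u$ (if $\epsilon_k \neq \epsilon_m$, the mutated $k$-part need not match $\ov{\mce}$ with either sign), so (iii) presupposes exactly the global consistency that (ii) was meant to deliver. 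The paper's Lemma \ref{nerveYsyslem} reverses the order: it first transports the agree/opposite status of \emph{every} $j$-part, $j\in[1,n]$, to a single vertex, and only then applies skew-symmetrizability plus indecomposability there, where all $n$ signs are available. The observation that legitimizes that transport — and which your proposal is missing — is that if the $k$-part and $j$-part carry opposite signs at the same vertex, then skew-symmetrizability of both matrices forces $b_{kj}(t)=\ov{b}_{kj}(\ov{t})=0$, so the $j$-part is unchanged by $\mu_k$ in both patterns and its status crosses the edge intact. With that added, your argument closes; without it, the unification of the $\epsilon(t,k)$ does not go through.
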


In particular applying the proposition when $\mcn$ is the star neighborhood of a vertex $t$, to check that $\Psi$ is a quasi-homomorphism, it suffices to check that $\Psi(x_j(t)) \asymp \ov{x}_j(\ov{t})$ for all $j \in [1,n]$, as well as checking $\Psi(x_j(t')) \asymp \ov{x}_j(\ov{t'})$ for each adjacent edge $t \xrightarrow{j} t'$. Lemma \ref{constructQIs} now follows. 

\begin{proof}
Choose a vertex $t \in \mcn$. By hypothesis for all $j$, $\Psi(x_j(t)) = c_j(t) \ov{x}_j(\ov{t})$ for some $c_j(t) \in \ov{\bfp}$, so we are left checking that $B(t) = B(\ov{t})$ and $\hat{\mathbf{y}}(t) = \hat{\mathbf{y}}(\ov{t})$. Suppose $t \xrightarrow{k} t'$ is an edge in $\mcn$, then there is a scalar $c_k(t')$ such that $\Psi(x_k(t' )) = c_k(t') \ov{x}_k(\ov{t'})$.  

The exchange relation defining $\ov{x}_k(\ov{t'})$ in $\ov{\mce}$ is
\begin{equation}\label{barredexchange}
\ov{x}_{k}(\ov{t})\ov{x}_{k}(\ov{t'}) =  \ov{p}^+_k(\ov{t}) \prod \ov{x}_{j}(\ov{t})^{[\ov{b}_{jk}(\ov{t})]_+} 
 + \ov{p}^-_k(\ov{t})\prod \ov{x}_{k}(\ov{t})^{[-\ov{b}_{jk}(\ov{t})]_+}.
\end{equation}
 
On the other hand, applying $\Psi$ to the relation defining $x_k(t')$ in $\mce$ and rearranging yields 
\begin{equation}\label{applyPsitoexchange}
\ov{x}_k(\ov{t})\ov{x}_{k}(\ov{t'}) = \frac{1}{c_k(t')c_k(t)} ( \Psi(  p^+_k(t)) \prod \Psi(x_j(t))^{[b_{jk}(t)]_+}
+\Psi(p^-_k(t)) \prod (\Psi(x_j(t)))^{[-b_{jk}(t)]_+} ).
\end{equation}

Abbreviating the two terms on the right hand side of \eqref{barredexchange} as $X+Y$, and the two terms in \eqref{applyPsitoexchange} as $Z+W$, 
we see by algebraic independence in the seed at $t$ that either $X = Z, Y = W$, or $X = W, Y = Z$. Refer to these as Case $1$ or Case $2$ respectively. 
By inspection, we see that $\hat{\ov{y}}_k(\ov{t})$ is the ratio $\frac{X}{Y}$, while  $\Psi(\hat{y}_k(t))$ is $\frac{Z}{W} $. Thus in Case $1$ we deduce that $\Psi(\hat{y}_k(t)) = \hat{\ov{y}}_k(\ov{t})$ and the matrices $B(t)$ and $B(\ov{t})$ have the same $k^{\text{th}}$ column. In Case $2$ we deduce the same thing once we replace $\ov{\mce}$ by $\ov{\mce}^{\text{opp}}$. 

Now apply Lemma \ref{nerveYsyslem}.
\end{proof}

\begin{lem}\label{nerveYsyslem}
Let $\mcy = \{\mathbf{y}(t),B(t)\}$ and $\ov{\mcy} = \{\ov{\mathbf{y}}(t),\ov{B}(t)\}$ be two $Y$-patterns whose matrices $B(t)$ are indecomposable. Let $\mcn$ be a nerve for $\bbt_n$. Suppose for every vertex $t \in \mcn$ and label $k$ such that the edge $t \xrightarrow{k} t'$ is in $\mcn$, one of the following holds 
\begin{align}
y_k(t) = \ov{y}_k(t)  &\text{ and } b_{jk}(t) = \ov{b}_{jk}(t) \text{ for all $j \in [1,n]$, or} \label{epartsagree}\\
y^{\text{opp}}_k(t) = \ov{y}_k(t)  &\text{ and } b^{\text{opp}}_{jk}(t) = \ov{b}_{jk}(t) \text{ for all $j \in [1,n]$}, \label{epartsagreeii}
\end{align}
then $\mcy = \ov{\mcy}$ or $\mcy^{\text{opp}} = \ov{\mcy}$ accordingly. 
\end{lem}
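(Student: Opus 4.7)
The plan exploits the fact that a $Y$-pattern is determined by its data $(B(t),\mathbf{y}(t))$ at any single vertex $t$ via the recurrences~\eqref{brecurrence} and~\eqref{yrecurrence}. Consequently it suffices to find \emph{one} vertex $t^{*}\in\bbt_n$ at which $(B(t^{*}),\mathbf{y}(t^{*}))=(\ov{B}(t^{*}),\ov{\mathbf{y}}(t^{*}))$ in the ``same'' case (giving $\mcy=\ov{\mcy}$), or at which the analogous equality holds with the opposite seed in the ``opp'' case (giving $\mcy^{\text{opp}}=\ov{\mcy}$).

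After possibly replacing $\ov{\mcy}$ by $\ov{\mcy}^{\text{opp}}$, I would assume that at a chosen initial nerve edge $t_{0}\xrightarrow{k_{0}}t_{0}'$ it is case~\eqref{epartsagree} (``Type~1'') that holds, and aim for $\mcy=\ov{\mcy}$. Two local facts would then be established. \emph{First:} Type~1 at $(t,k)$ propagates along any $k$-edge to $(\mu_{k}t,k)$, since mutation in direction $k$ inverts $y_{k}$ and negates the $k$-th column of $B$, operations that commute with the equalities defining Type~1. \emph{Second:} if at a vertex $t$ two nerve directions $k,k'$ satisfy \emph{opposite} conditions~\eqref{epartsagree},~\eqref{epartsagreeii}, then $b_{kk'}(t)=0$. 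Indeed, the skew-symmetrizability relations $d_{k}b_{kk'}(t)=-d_{k'}b_{k'k}(t)$ and $\ov{d}_{k}\ov{b}_{kk'}(t)=-\ov{d}_{k'}\ov{b}_{k'k}(t)$, combined with $b_{k'k}(t)=\ov{b}_{k'k}(t)$ and $b_{kk'}(t)=-\ov{b}_{kk'}(t)$, force $d_{k}/d_{k'}=-\ov{d}_{k}/\ov{d}_{k'}$, impossible for positive diagonal skew-symmetrizers unless $b_{kk'}(t)=0$.

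Next the plan is to extend Type~1 across all of $\mcn$ using these local facts together with the connectedness of $\mcn$, the occurrence of every edge label in $\mcn$, and the indecomposability of the exchange matrices. Walking along $\mcn$, any transition from Type~1 to Type~2 at a common vertex $t_{i}$ would require the $b$-entry between the two adjacent labels to vanish there; indecomposability of $B(t_{i})$ supplies a chain of nonzero $b$-entries linking those labels, and invoking the nerve edges at each intermediate label (which exist by the label-completeness of $\mcn$) should rule out any such transition. A supporting observation used throughout is that Type~1 agreement for column $k$ and $y_{k}$ at $t$ is preserved by mutation in any other direction $m$ at which Type~1 also holds at $t$, a direct check from~\eqref{brecurrence} and~\eqref{yrecurrence}.

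The hardest step I anticipate is consolidating these nerve-wise agreements---initially sitting at different vertices and pertaining to specific columns of $B$ and specific coordinates of $\mathbf{y}$---into a \emph{simultaneous} agreement at one vertex $t^{*}$. Using the auxiliary observation just mentioned, the plan is, for each direction $k\in[1,n]$, to pick a nerve vertex $s_{k}$ where Type~1 holds for direction $k$, and then to transport these agreements to a common $t^{*}$ along mutation sequences that use only directions whose agreements have already been secured at the current vertex, so that no previously established column-$k$ agreement is destroyed en route. Choosing the order of transports to respect this constraint is the main technical hurdle. Once $t^{*}$ is reached, full agreement there and the rigidity of $Y$-patterns under mutation yield $\mcy=\ov{\mcy}$ globally.
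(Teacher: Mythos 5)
Your plan follows the same route as the paper's proof: normalize so that Type~1 holds at an initial nerve edge, propagate agreement along the nerve, and use indecomposability of $B$ at a single vertex to rule out a Type~1/Type~2 mix. Your ``Local fact~2'' --- the skew-symmetrizability computation forcing $b_{kk'}(t)=0$ when the $k$-part and $k'$-part have conflicting status --- is the key mechanism, and it is the content behind the paper's terse appeal to ``the connectedness hypothesis.'' But there is a gap in how you deploy it, and you have singled out the wrong step as the bottleneck.

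Your Local fact~2 is stated only for two \emph{nerve} directions $k,k'$ at a common vertex $t$, yet the ``chain of nonzero $b$-entries'' you want to run through $B(t_i)$ generally involves labels that do \emph{not} index nerve edges at $t_i$, so you cannot directly invoke their hypothesized statuses there; those statuses only reach $t_i$ via transport along the nerve, and naive transport breaks whenever the mutation direction's status disagrees with the status of the part being carried. The fix is to notice that your own calculation already proves the stronger statement: if at any $t$ the $k$-part and $k'$-part are each comparable (equal or opposite) between the two patterns, with differing statuses, then $b_{kk'}(t)=0$, with no reference to nerve edges. That vanishing means mutation in direction $k'$ leaves the $k$-part untouched, so comparability and status propagate \emph{unconditionally}, even across a status mismatch. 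With this in hand one shows, by induction on the distance in $\mcn$ from $t$ to the nearest nerve edge labeled $j$, that the $j$-parts are comparable at every $t\in\mcn$ for every $j$; then the sharpened fact together with indecomposability of $B(t_0)$ force all statuses at $t_0$ to coincide, giving $\mcy=\ov{\mcy}$ after the initial normalization. By contrast, the consolidation step you flag as ``the main technical hurdle'' is routine once every nerve edge is Type~1: mutation in a direction whose parts agree preserves agreement of every other part, so there is no ordering of transports to arrange.
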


Roughly, there are two issues here: first the question of whether $Y$-patterns can be checked on a nerve (they can), and second whether we are dealing with $\mcy$ or $\mcy^{\text{opp}}$ (this relies on indecomposability). 

\begin{proof}
In any $Y$-pattern, for a given $(k,t)$ pair (not necessarily in $\mcn$), we will refer to $y_k(t)$ and the $k^{\text{th}}$ column of $B(t)$  as the \emph{$k$-part of the seed at $t$}. The equations in \eqref{epartsagree} say that $\mcy,\ov{\mcy}$ have either the same $k$-parts, or opposite $k$-parts, for any edge  $t \xrightarrow{k} t' \in \mcn$. 

Pick a vertex $t_0 \in \mcn$, and an edge $k \in \mcn$ incident to $t_0$. If necessary, replace $\mcy$ by $\mcy^{\text{opp}}$ so that 
the given $Y$-patterns have the same $k$-part at $t_0$. We seek to prove $\mcy,\ov{\mcy}$ have the same $j$-part at $t_0$, for all $j \in [n]$. 
 
Let $t_0 \xrightarrow{k} t_1 \in \mcn$ be an edge in the nerve incident to $t_0$. 
The mutation rules \eqref{brecurrence}, \eqref{yrecurrence} are involutive %commute with replacing $\mcy$ with $\mcy^{\text{opp}}$, 
and have the property that for any $j$, the $j$-part of the seed at $t_1$ depends only on the $j$-part and $k$-part of the seed at $t_0$. Since the given $Y$-patterns agree at $k$, we see that their $j$-parts agree at $t_1$ if and only if they agree at $t_0$. Repeatedly apply this observation, mutating in all possible directions in the nerve, while preserving the fact that the $j$-parts at $t \in \mcn$ coincide if and only if they coincide at $t_0$. Since the nerve is connected and every edge label shows up at least once in $\mcn$, we conclude that for all $j$, the $j$-parts at $\mcy$ and $\ov{\mcy}$ are either the same or opposite. The connectedness hypothesis assures they are all in fact the same.  
\end{proof}

\section{Quasi-automorphisms and the cluster modular group}\label{QASecn}
A \emph{quasi-automorphism} is a quasi-isomorphism from a given seed pattern $\mce$ to itself, cf.~Definition~\ref{proportionality}. One can think of a quasi-automorphism as a choice of a map describing an automorphism of the pattern of seed orbits associated to $\mce$. We will use quasi-automorphisms to define a variant of a group of automorphisms of $\mce$, generalizing the group of cluster automorphisms defined for seed patterns with trivial coefficients in \cite{ASS} while retaining many of the properties of cluster automorphisms (e.g. Proposition \ref{basicqhpropsi}, Corollary \ref{geomtypeseedhom} and Proposition \ref{checkqhonnerves}).  

The following example illustrates that the notion of quasi-automorphism is more general than the ``naive'' notion of a semifield automorphism preserving the seed orbit pattern. 
\begin{example}\label{QAsarenotautos} A quasi-automorphism does not have to be an automorphism of semifields. Consider the composition $G^* \circ F^*$ from Example \ref{GrandBandsQI}, which is a quasi-automorphism of~$\bbc[\bfx]$ proportional to the identity map. It rescales each Pl\"ucker variable by a product of frozens: $G^* \circ F^* (\Delta_S) = \Delta_{145}\Delta_{125}\Delta_S$. The ambient semifield of $\bbc[\bfx]$ has a grading for which every Pl\"ucker variable is degree one, and every homogeneous element in the image of $G^* \circ F^*$ has degree a multiple of $3$. Thus $G^* \circ F^*$ cannot be surjective. 
\end{example}

\begin{defn}\label{QAutDefn} The \emph{quasi-automorphism group} $\QAut_0(\mce)$ is the set of proportionality classes of quasi-automorphisms of $\mce$. This is the automorphism group of $\mce$ in the quotient category discussed in Remark \ref{QHcategory}. 
\end{defn}

\begin{rmk}\label{CMGandCoeffs} Let us call a quasi-automorphism trivial if it is proportional to the identity map. The set of trivial quasi-automorphisms is a monoid (but not usually a group) under composition; the composition $G^* \circ F^*$ from Example \ref{GrandBandsQI} bears witness to this. One way to construct quasi-automorphisms proportional to a given~$\Psi$ is to compose $\epsilon_1 \circ \Psi \circ \epsilon_2$ with $\epsilon_1$ and $\epsilon_2$ trivial. It is tempting to try and define $\QAut_0(\mce)$ purely in terms of thse trivial quasi-automorphisms, without mentioning proportionality. However the relation~$\equiv$ defined by $\Psi_1 \equiv \Psi_2$ if $\Psi_2 = \epsilon_1 \circ \Psi \circ \epsilon_2$ is neither symmetric nor transitive, so one cannot form a quotient category using this relation. 
\end{rmk}

We write~$\QAut_0(\mce) = \QAut_0(\tilde{B})$ when $\mce$ is of geometric type and specified by an initial matrix $\tilde{B}$. By Remark \ref{propandgradings}, two quasi-automorphisms are proportional to each other if and only if their ratio defines a $\bbz^m$-grading on $\mce$ (taking exponents of elements of $\bfp$ to obtain elements of $\bbz^m$). Fixing a particular quasi-automorphism $\Psi$, the number of degrees of freedom in specifying another quasi-automorphism proportional to $\Psi$ is therefore the corank of~$\tilde{B}$.

\begin{lem}\label{symmetryofspans} Let $\mce$ be a seed pattern of geometric type and $\Psi$ a quasi-homomorphism from $\mce$ to itself. Then $\Psi$ is a quasi-automorphism.
\end{lem}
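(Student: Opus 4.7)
The plan is to construct a quasi-inverse $\Phi$ for $\Psi$ by invoking Corollary \ref{geomtypeseedhom} in both directions. Let $\tau \colon \bbt_n \to \bbt_n$ be the labeled tree isomorphism with $\Psi(\Sigma(t)) \sim \Sigma(\tau(t))$ and set $\bar{t} = \tau(t)$. Corollary \ref{geomtypeseedhom} applied to $\Psi$ produces an integer matrix $M_\Psi$ satisfying $\tilde{B}(\bar{t}) = M_\Psi\,\tilde{B}(t)$, which immediately gives the inclusion $\rowspan(\tilde{B}(\bar{t})) \subseteq \rowspan(\tilde{B}(t))$. The heart of the argument---matching the lemma's title ``symmetry of spans''---will be to establish the reverse inclusion. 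Once that is in hand, a second appeal to Corollary \ref{geomtypeseedhom} yields a quasi-homomorphism $\Phi \colon \mce \to \mce$ with $\Phi(\Sigma(\bar{t})) \sim \Sigma(t)$.

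To establish the reverse inclusion, I plan to use the Fomin--Zelevinsky decomposition of matrix mutation, $\mu_k(\tilde{B}) = E_k\,\tilde{B}\,F_k$ with $E_k \in \GL_{n+m}(\bbz)$ and $F_k \in \GL_n(\bbz)$. For each vertex $s \in \bbt_n$, let $V(s) \subseteq \bbq^n$ denote the $\bbq$-span of the rows of $\tilde{B}(s)$ and set $\Lambda(s) = V(s) \cap \bbz^n$. Because $E_k$ does not affect rowspans and right-multiplication by $F_k$ is a $\bbz$-linear bijection of $\bbz^n$ that maps $V$-lattices to $V$-lattices and preserves indices, the quantity $[\Lambda(s) : \rowspan(\tilde{B}(s))]$ is constant across the mutation class $\mce$; in particular it takes the same value at $t$ and at $\bar{t}$. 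Moreover, matrix rank is mutation-invariant, so the equation $\tilde{B}(\bar{t}) = M_\Psi\,\tilde{B}(t)$ forces the containment $V(\bar{t}) \subseteq V(t)$ to be an equality of $\bbq$-subspaces, whence $\Lambda(t) = \Lambda(\bar{t})$. Consequently $\rowspan(\tilde{B}(t))$ and $\rowspan(\tilde{B}(\bar{t}))$ are two sublattices of a common lattice $\Lambda$, one contained in the other and of equal finite index---so they must coincide.

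With such a $\Phi$ in hand, I will compute $(\Phi \circ \Psi)(\Sigma(t)) \sim \Phi(\Sigma(\bar{t})) \sim \Sigma(t)$ using Proposition \ref{basicqhpropsii}. The labeled tree isomorphism associated with $\Phi \circ \Psi$ therefore fixes the vertex $t$; being determined by its value at a single vertex, it is the identity automorphism of $\bbt_n$, and hence $(\Phi \circ \Psi)(\Sigma(s)) \sim \Sigma(s)$ for every $s$, making $\Phi \circ \Psi$ proportional to the identity map on $\mcf_{>0}$. An analogous computation shows $\Psi \circ \Phi \sim \mathrm{id}$, completing the proof that $\Psi$ is a quasi-automorphism. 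The only technical point requiring care is the mutation-invariance of the index $[\Lambda(s) : \rowspan(\tilde{B}(s))]$; the rest of the argument is a routine application of already-established machinery from Sections \ref{SeedOrbitsSecn} and \ref{NormalizedQHs}.
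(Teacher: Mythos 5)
Your proposal is correct and follows essentially the same route as the paper: reduce via Corollary \ref{geomtypeseedhom} to showing that containment of integer row spans between mutation-equivalent extended exchange matrices is automatically an equality, and deduce that from the unimodular decomposition of matrix mutation. The paper cites \cite[Lemma 3.2]{CAIII} and asserts the symmetry in one line; your index-invariance argument for $[\Lambda(s):\rowspan(\tilde{B}(s))]$ is just a fleshed-out justification of that same step.
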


Thus when $\mce$ is of geometric type, every quasi-homomorphism $\Psi$ from $\mce$ to itself determines an element of $\QAut_0(\mce)$, i.e. any such $\Psi$ has a quasi-inverse.

\begin{proof}  By \cite[Lemma 3.2]{CAIII}, if two $\tilde{B}$-matrices $\tilde{B}(t_0)$ and $\tilde{B}(\ov{t}_0)$ are in the same mutation class, they are related by a pair of unimodular integer matrices: $ \tilde{B}(\ov{t}_0) = M \tilde{B}(t_0) N$, for~$ M \in \GL_{m+n}(\bbz),$ and~$N \in \GL_n(\bbz)$.

By Corollary \ref{geomtypeseedhom}, for a pair of vertices $t_0,\ov{t_0} \in \bbt_n$, there is a quasi-homomorphism $\Psi$ sending the seed orbit at $t_0$ to the seed orbit at $\ov{t}_0$ if and only if the principal parts of $\tilde{B}(t_0)$ and $\tilde{B}(\ov{t}_0)$ agree, and the row span of $\tilde{B}(t_0)$ contains the row span of $\tilde{B}(\ov{t}_0)$. By the unimodularity of mutation, this criterion is preserved under swapping the roles of $t_0$ and $\ov{t}_0$ -- if the row span of $\tilde{B}(t_0)$ contains the row span of $\tilde{B}(\ov{t}_0)$ then in fact the two row spans are equal submodules of $\bbz^n$. 
\end{proof}

\begin{rmk}\label{twistremark} Marsh and Scott \cite{MarshScott} described a version of the twist for the Grassmannian cluster algebras. One can show that it is a quasi-automorphism using \cite[Corollary 8.6]{MarshScott}. 
\end{rmk}

We will now recall the definitions of some preexisting groups of automorphisms associated a seed pattern $\mce$. Namely: 
\begin{itemize}
\item the cluster modular group $\CMG(\mce)$ of Fock and Goncharov \cite{FG}, and
\item the group $\Aut(\mce)$ of automorphisms in the category of (rooted) cluster algebras defined by Assem, Dupont and Schiffler \cite{ADS}.
% As an important special case, this includes the group of cluster automorphisms studied by Assem, Schiffler, and Schramchenko \cite{ASS} when $\mce$ has trivial coefficients.    
\end{itemize}

We first present these definitions and then discuss a particular example where all the groups are computed and compared to each other and to the quasi-automorphism group.  

\begin{defn}[Cluster modular group {\cite[Definition $2.14$]{FG}}]\label{CMGDefn} Let $\mce$ be a seed pattern with exchange graph~$\mathbf{E}$. The \emph{cluster modular group} $\CMG(\mce)$ is the group of graph automorphisms $g \in \Aut(\mathbf{E})$ that preserve the exchange matrices. More precisely, recall that the unlabeled seed at vertex $t \in \mathbf{E}$ is indexed not by $[1,n]$ but by the elements of $\starr(t)$. Then an element of the cluster modular group is a graph automorphism $g \in \text{Aut}(\mathbf{E})$ satisfying $B(t)_{t',t''} = B(g(t))_{g(t'),g(t'')}$ for all $t \in \mathbf{E}$ and $t',t'' \in \starr(t)$. Such a graph automorphism can be determined by choosing a pair of vertices $t_0,\ov{t_0} \in \mathbf{E}$ and an identification of $\starr(t_0)$ with $\starr(\ov{t}_0)$ under which $B(t_0) = B(\ov{t_0})$. 
\end{defn}

\begin{rmk} Because Definition \ref{CMGDefn} is in terms of automorphisms of the exchange graph, the cluster modular group appears to depend on the entire seed pattern $\mce$, and not just the underlying exchange matrices in $\mce$. However, it is widely believed that the exchange graph -- and therefore the cluster modular group -- is in fact independent of the choice of coefficients (i.e., it only depends on the exchange matrices, and therefore can be prescribed by giving a single such matrix). This has been proven for skew-symmetric  exchange matrices \cite{IKLP}.  
\end{rmk}

The quasi-automorphism group is a subgroup of the cluster modular group. Indeed, each quasi-automorphism $\Psi$ determines a cluster modular group element $g$ via~$\Psi(\Sigma(t)) \sim \Psi(g(t))$, and proportional quasi-automorphisms determine the same $g$. Since $\Psi$ preserves exchange matrices and evaluating $\Psi$ commutes with permuting the cluster variables in a seed, the element $g$ produced this way is indeed an element of the cluster modular group.

One can also consider automorphisms in the category of cluster algebras defined in \cite{ADS}. We reproduce a version of the definition for the sake of convenience.  
\begin{defn}\label{strongautomorphism} Let $\mce$ be a seed pattern. We say two seeds $\Sigma_1$ and $\Sigma_2$ in $\mce$ are similar if $\Sigma_2$ coincides with $\Sigma_1$ after first permuting the frozen variables, and then permuting the indices $[1,n]$ appropriately. 

Suppose the exchange matrices in $\mce$ are indecomposable. Let $\mca$ be its cluster algebra. A $\bbz$-algebra map $f \colon \mca \to \mca$ is an \emph{automorphism} of $\mce$ if for every (equivalently, for any) seed $\Sigma$ in $\mce$, $f(\Sigma)$ or $f(\Sigma)^{\text{opp}}$ is similar to a seed in $\mce$. We denote the group of automorphisms of $\mce$ by $\Aut(\mce)$.  
\end{defn}

The elements of $\Aut(\mce)$ are similar to \emph{strong isomorphisms} from \cite{CAII} but slightly more general since one is allowed to permute the frozen variables. 

We say $f$ as in Definition \ref{strongautomorphism} is a \emph{direct automorphism} or \emph{inverse automorphism} according to whether $f(\Sigma)$ or $f(\Sigma)^{\text{opp}}$ is a seed in $\mce$. Let $\Aut^+(\mce) \subset \Aut(\mce)$ denote the subgroup of direct automorphisms. By similar reasoning to~\cite[Theorem 2.11]{ASS}, this subgroup has index two  in $\Aut(\mce)$ if each seed $\Sigma$ in $\mce$ is mutation-equivalent to $\Sigma^{\text{opp}}$; otherwise $\Aut^+(\mce) = \Aut(\mce)$.

An important special case of Definition \ref{strongautomorphism} is when $\mce$ has trivial coefficients in which case the group $\Aut(\mce)$ is the group of \emph{cluster automorphisms} \cite{ASS}. When $\mce$ has trivial coefficients, we have $\Aut^+(\mce) = \CMG(\mce)$. Furthermore, a direct cluster automorphism is the same as a quasi-automorphism in this case.   

\medskip
We can summarize the containments between the preceding groups as 
\begin{equation}\label{containmentseq}
\Aut^+(\mce) \subset \QAut_0(\mce) \subset \CMG(\mce) \overset{?}{=} \Aut^+(\mce_\triv).
\end{equation} 
where $\mce$ is a seed pattern and $\mce_\triv$ is the seed pattern obtained from $\mce$ by trivializing its coefficients. The equality~$\CMG(\mce) \overset{?}{=} \Aut^+(\mce_\triv)$ depends on the belief that $\CMG(\mce) = \CMG(\mce_\triv)$, cf.~Remark~\ref{CMGandCoeffs}. The group $\Aut(\mce_\triv)$ contains all of the groups in \eqref{containmentseq}, and the group $\Aut(\mce)$ doesn't sit nicely with the rest of the containments when $\Aut^+(\mce) \subsetneq \Aut(\mce)$. 

We next illustrate the differences between the groups in \eqref{containmentseq} using a particular cluster algebra associated with a bordered marked surface. Basic notions and references concerning this class of cluster algebras are given in Section \ref{SurfacesSecn}.  

\begin{example}\label{annulusclustereg} Let $(\mathbf{S,M})$ be an annulus with two marked points on each boundary component cf.~Figure~\ref{annulusfig}. We have colored the marked points either black or white to aid in describing the automorphism groups below. 

\begin{figure}[ht]
\begin{center}
\begin{tikzpicture}[scale = .8]
\def  \rsizeo{.3};
\def  \rsizet{1.2};
\draw [fill= lightgray] (0,0) circle [radius = \rsizeo];
\draw (\rsizeo,0) arc [radius = \rsizeo , start angle = 0, end angle = 360];
\draw (\rsizet,0) arc [radius = \rsizet , start angle = 0, end angle = 360];
\draw [thick, dashed] (0,-\rsizeo)--(0,-\rsizet);
\node at (0,-.7^\rsizet) {$\vee$};
\node at (0,1.3*\rsizet) {$v_1$};
\node at (0,-1.3*\rsizet) {$v_4$};
\node at (60:2*\rsizeo) {$v_2$};
\node at (-40:2.2*\rsizeo) {$v_3$};
\draw [fill= white] (0,-\rsizet) circle [radius = .07];
\draw [fill= white] (0,-\rsizeo) circle [radius = .07];
\draw [fill= black] (0,\rsizet) circle [radius = .07];
\draw [fill= black] (0,\rsizeo) circle [radius = .07];

\begin{scope}[yshift = -.6cm, xshift = 3cm]
\node at (0,.6) {$\wedge$};
\node at (2,.6) {$\wedge$};
\draw [thick,dashed] (0,0)--(0,1.2);
\draw [thick,dashed] (2,0)--(2,1.2);
\draw [thick] (0,0)--(2,0);
\draw [thick] (0,1.2)--(2,1.2);
\draw [fill= white] (0,0) circle [radius = .07];
\draw [fill= white] (0,1.2) circle [radius = .07];
\draw [fill= white] (2,0) circle [radius = .07];
\draw [fill= white] (2,1.2) circle [radius = .07];
\draw [fill= black] (1,0) circle [radius = .07];
\draw [fill= black] (1,1.2) circle [radius = .07];
\node at (0,1.6) {$v_4$};
\node at (2,1.6) {$v_4$};
\node at (1,1.6) {$v_1$};
\node at (0,-.4) {$v_3$};
\node at (2,-.4) {$v_3$};
\node at (1,-.4) {$v_2$};
\end{scope}
\end{tikzpicture}
\caption{An annulus with two marked points on each boundary component. At right, we show a ``flat form'' of this annulus obtained by cutting along the dashed line. \label{annulusfig}}
\end{center}
\end{figure}

The cluster modular group $\CMG(\mathbf{S,M})$ for a cluster algebra associated with this annulus coincides with the \emph{mapping class group} of the annulus (see Proposition~\ref{ASSconjecture} below).  
%as follows from \cite[Theorem $4.12$]{ASS}. 
This group has the following explicit description: let $\rho$ be the (isotopy class of) the homeomorphism of $\mathbf{S}$ that rotates the inner boundary of the annulus clockwise by a half-turn. Let $\tau$ be the clockwise half-turn of the outer boundary. Let $\sigma$ be the homeomorphism represented by a $180$ degree turn of the flat form of the annulus; it swaps the inner and outer boundary components. Then the elements $\rho,\tau,$ and $\sigma$ generate the cluster modular group.  The group has a presentation $\CMG(\mathbf{S,M}) = \langle \rho,\tau,\sigma \colon (\rho \tau)^2 = \sigma^2 = 1,  \rho \tau = \tau \rho, \sigma \rho = \tau \sigma \rangle$ with respect to these generators. It is a central extension $1 \mapsto \bbz / 2 \bbz \mapsto \CMG \mapsto \Dih_\infty \mapsto 1$ of the infinite dihedral group $\Dih_\infty = \langle r,s \colon s^2 = (sr)^2 = 1 \rangle $ by~$\bbz  / 2 \bbz = \langle  \rho \tau \rangle$, using  the map $\sigma \mapsto s, \rho \mapsto r, \tau \mapsto r^{-1}$.

\begin{figure}
\begin{tikzpicture}[scale = 1]
\def  \rsizeo{.3};
\def  \rsizet{1.5};
\node at (0,2.2) {$T$};
\draw [fill= lightgray] (0,0) circle [radius = \rsizeo];
\draw (\rsizeo,0) arc [radius = \rsizeo , start angle = 0, end angle = 360];
\draw (\rsizet,0) arc [radius = \rsizet , start angle = 0, end angle = 360];
\draw [red] (\rsizeo,0)--(10:\rsizet);
\draw [red] (23:\rsizeo)--(15:\rsizet);
\draw [thick] (0,\rsizeo)--(0,\rsizet);
\draw [thick] (0,-\rsizeo)--(0,-\rsizet);
\draw [thick] (-90: \rsizeo) to [out = -40, in = 270] (0:.6*\rsizet) to [out = 90, in = -40] (90: \rsizet);
\draw [thick] (90: \rsizeo) to [out = 150, in = 90] (180:.6*\rsizet) to [out = 270, in = 150] (-90: \rsizet);
\node at (100:.8*\rsizet) {$a$};
\node at (-80:.75*\rsizet) {$b$};
\node at (50:.85*\rsizet) {$c$};
\node at (-155:.85*\rsizet) {$d$};
\node at (0:.83*\rsizet) {$L$} ; 
\draw [fill= white] (0,-\rsizet) circle [radius = .07];
\draw [fill= white] (0,-\rsizeo) circle [radius = .07];
\draw [fill= black] (0,\rsizet) circle [radius = .07];
\draw [fill= black] (0,\rsizeo) circle [radius = .07];

\begin{scope}[xshift = 4cm, yshift = -1.3cm]
\node at (0,3.5) {$\tilde{B}(T)$};
\node at (0,0) {$x_a$};
\node at (0,2.5) {$x_b$};
\node at (-1.5,1.25) {$x_d$};
\node at (1.5,1.25) {$x_c$};
\node at (0,1.25) {$\boxed{x_L} $};
\draw [thick, ->] (.2,.2)--(1.3,1.05);
\draw [thick, ->] (-.2,.2)--(-1.3,1.05);
\draw [thick, ->] (.2,2.3)--(1.3,1.45);
\draw [thick, ->] (-.2,2.3)--(-1.3,1.45);
\draw [thick, ->] (.5,1.16)--(1.11,1.16);
\draw [thick, ->] (.5,1.34)--(1.11,1.34);
\end{scope}

\begin{scope}[xshift = 8cm, yshift = -1.3cm]
\node at (0,3.5) {$\tilde{B}(\rho(T))$};
\node at (0,0) {$x_c$};
\node at (0,2.5) {$x_d$};
\node at (-1.5,1.25) {$x_f$};
\node at (1.5,1.25) {$x_e$};
\node at (0,1.25) {$\boxed{x_L} $};
\draw [thick, ->] (.2,.2)--(1.3,1.05);
\draw [thick, ->] (-.2,.2)--(-1.3,1.05);
\draw [thick, ->] (.2,2.3)--(1.3,1.45);
\draw [thick, ->] (-.2,2.3)--(-1.3,1.45);
\draw [thick, ->] (-.15,.85)--(-.15,.4);
\draw [thick, ->] (.15,.85)--(.15,.4);
\end{scope}

\begin{scope}[xshift = 12cm, yshift = -1.3cm]
\node at (0,3.5) {$\tilde{B}(\rho^2(T))$};
\node at (0,0) {$x_e$};
\node at (0,2.5) {$x_f$};
\node at (-1.5,1.25) {$x_h$};
\node at (1.5,1.25) {$x_g$};
\node at (0,1.25) {$\boxed{x_L} $};
\draw [thick, ->] (.2,.2)--(1.3,1.05);
\draw [thick, ->] (-.2,.2)--(-1.3,1.05);
\draw [thick, ->] (.2,2.3)--(1.3,1.45);
\draw [thick, ->] (-.2,2.3)--(-1.3,1.45);
\draw [thick, ->] (-.15,.85)--(-.15,.4);
\draw [thick, ->] (.15,.85)--(.15,.4);
\draw [thick, ->] (-.15,1.65)--(-.15,2.1);
\draw [thick, ->] (.15,1.65)--(.15,2.1);
\draw [thick, ->] (-1.11,1.16)--(-.5,1.16);
\draw [thick, ->] (-1.11,1.34)--(-.5,1.34);
\end{scope}

\end{tikzpicture}
\caption{A lamination $L$ consisting of two copies of the same curve on the annulus, determining a single frozen variable $x_L$. We have also drawn a triangulation $T$ of this annulus by the arcs $a,b,c,d$. The quivers $\tilde{B}(T),\tilde{B}(\rho(T))$, and $\tilde{B}(\rho^2(T))$ are shown at right, where the extra arcs are~$e = \rho^2(a), f = \rho^2(b), g = \rho^2(c), h = \rho^2(d)$. The values of $\hat{y}$ in each quiver are read off as the Laurent monomial ``incoming variables divided by outgoing variables.''  
\label{annulusclusterfig}} 
\end{figure}

Figure~\ref{annulusclusterfig} gives a choice of lamination~$L$ and triangulation $T$, as well as the quivers $\tilde{B}(T), \tilde{B}(\rho(T))$ and $\tilde{B}(\rho^2(T))$. Let $\mca$ be the corresponding cluster algebra with frozen variable $x_L$, $\mce$ its seed pattern,  and $\mcf_{>0}$ its ambient semifield.  

The cluster modular group element $\sigma \rho \tau$ permutes the arcs in $T$. It induces an automorphism of the quiver $\tilde{B}(T)$, and therefore an element of $\Aut(\mce)$.   

The quivers $\tilde{B}(T)$ and $\tilde{B}(\rho^2(T))$ are neither isomorphic nor opposite, so there is no strong automorphism sending the seed at $T$ to the seed at $\rho^2(T)$. Likewise, there is no strong automorphism between $T$ and $\rho^{\pm 4}( T), \rho^{\pm 6} (T), \rho^{\pm 8} ( T)$, and so on. However, there \emph{is} a quasi-automorphism relating these seeds, which we describe now. It is the semifield map $\Psi \colon \mcf_{>0} \to \mcf_{>0}$ defined by $\Psi(x_L) = x_L$ as well as $\Psi(x_\gamma) = x_L^{-1} \cdot  x_{\rho^2(\gamma)}$ for $\gamma = a,b,c,d$. It sends each $\hat{y}$ for $\Sigma(T)$ to the corresponding $\hat{y}$ for $\Sigma(\rho^2 \cdot T)$, defining a quasi-automorphism of $\mce$ whose map on seed orbits is $\rho^2$. It has a simple global description on cluster variables which can be checked inductively by performing appropriate mutations away from $T$. Namely, for each arc $\gamma$ let $\iota(\gamma,L)$ denote the number of times $\gamma$ crosses the two curves in $L$. For example, $\iota(a,L) = 0$ and $\iota(c,L) = 1$. Then 
\begin{equation}\label{powerofqforgamma}
\Psi(x_\gamma) = x_L^{\iota(\gamma,L) - \iota(\rho^2(\gamma),L)} \cdot x_{\rho^2( \gamma)}
\end{equation}
for all arcs $\gamma$ in the annulus. The power of $x_L$ on the right hand side of \eqref{powerofqforgamma} is always equal to~$0,1$, or $- 1$. It is also simple to describe quasi-automorphisms realizing $\sigma$ and $\rho \tau$. 

Perhaps surprisingly, the seeds at $T$ and $\rho (T)$ are \emph{not} related by a quasi-automorphism. Indeed, the values of $\hat{y}$ are equal at the top and bottom vertices of $\tilde{B}(T)$ in Figure \ref{annulusclusterfig}, but they are not equal in $\tilde{B}(\rho ( T))$. The same holds for $T$ and $\tau ( T)$. 

Putting all of this together, there is only one nontrivial strong automorphism of $\mce$, namely the element $\sigma \rho \tau$. On the other hand, the quasi-automorphism group is infinite, generated by $\rho^2 , \sigma$ and  $\rho \tau$. It is a direct product $\Dih_\infty \times \bbz / 2 \bbz$. It is an index two subgroup of $\CMG(\mathbf{S,M})$, namely the kernel of the map $\CMG \twoheadrightarrow \bbz  / 2 \bbz$ that computes the parity of the number of black marked points sent to a white marked point.
\end{example}

Example \ref{annulusclustereg} suggests that although the cluster modular group $\CMG(\mce)$ may be strictly larger than the quasi-automorphism group $\QAut_0(\mce)$, the gap between these groups is not so large. Indeed, Section \ref{SurfacesSecn} establishes that for seed patterns associated with surfaces, $\QAut_0(\mce)$ is always a finite index subgroup of the cluster modular group.

\section{Quasi-automorphisms of cluster algebras from surfaces}\label{SurfacesSecn}
In this section we place Example~\ref{annulusclustereg} in context via results valid for any cluster algebra associated to a marked bordered surface as in \cite{ModuliSpaces, CATSI,CATSII, GSV2003}. We describe quasi-automorphisms of these cluster algebras in terms of the tagged mapping class group of the marked surface. 

 We follow the setup and notation in \cite{CATSII}. Let $\mca(\mathbf{S,M,L})$ denote the cluster algebra of geometric type determined by the triple $(\mathbf{S,M,L})$. Here $\mathbf{S}$ is an oriented bordered surface with a nonempty set~$\mathbf{M}$ of \emph{marked points}. The marked points reside either in the interior of $\mathbf{S}$ (we call these \emph{punctures}) or in $\partial \mathbf{S}$ (we call these \emph{cilia}). The set of punctures is $\ov{\mathbf{M}}$. We disallow a few possibilities for $(\mathbf{S,M})$, namely a sphere with three or fewer punctures, an $n$-gon when $n<4$, and a once-punctured monogon. The choice of coefficients is specified by a \emph{multi-lamination} $\mathbf{L} = (L_1,\dots,L_m)$, an $m$-tuple of (integral unbounded measured) laminations on $\mathbf{(S,M)}$. Each lamination $L_i$ consists of a finite number of \emph{curves} in $\mathbf{(S,M)}$.  

The cluster variables in $\mca(\mathbf{S,M})$ are indexed by \emph{tagged arcs}~$\gamma$, the set of which we denote by~$\mathbf{A}^\bowtie(\mathbf{S,M})$. The seeds in~$\mca(\mathbf{S,M,L})$ are indexed by \emph{tagged triangulations}~$T$ of~$(\mathbf{S,M})$. The extended exchange matrix~$\tilde{B}(T)$ for a seed has the \emph{signed adjacency matrix}~$B(T)$ as its principal part, and has the \emph{shear coordinate vector}~$\vec{b}(T,L_i)$ of the lamination~$L_i$ with respect to~$T$ as its~$i^{\text{th}}$ row of coefficients. 

The exchange graph of the resulting cluster algebra is independent of the choice of coefficients \cite[Corollary 6.2]{CATSII}. We let $\CMG(\mathbf{S,M})$ denote the corresponding cluster modular group. It is closely related to the following geometrically defined group.  

\begin{defn}[Tagged mapping class group \cite{ASS}]\label{markedmcgdefn}
Let $(\mathbf{S,M})$ be a bordered marked surface that is not a closed surface with exactly one puncture. 
A \emph{tagged mapping class} for $(\mathbf{S,M})$ is a pair $g = (f,\psi)$, where 
\begin{itemize}
\item $f$ is an element of the \emph{mapping class group} of $(\mathbf{S,M})$ -- i.e. $f$ is an orientiation-preserving homeomorphism of $\mathbf{S}$ mapping $\mathbf{M}$ to itself setwise, considered up to isotopies of $\mathbf{S}$ that fix $\mathbf{M}$ pointwise, and  
\item $\psi \colon \ov{\mathbf{M}} \to \{\pm 1\}$ is a function from the set of punctures to $\{\pm 1\}$.
\end{itemize}
When $(\mathbf{S,M})$ is a closed surface with one puncture $p$, we make the same definition but impose $\psi(p) = 1$ since tagged versions of arcs are not in the cluster algebra. The tagged mapping classes comprise the \emph{tagged mapping class group}, denoted $\mcm \mcg_\bowtie(\mathbf{S,M})$.
\end{defn}

We understand $\mcm \mcg_\bowtie(\mathbf{S,M})$ by its action on tagged arcs $\gamma \in A^\bowtie(\mathbf{S,M})$. A tagged mapping class $g = (f,\psi)$ acts on $\gamma$ by first performing the homeomorphism $f$ to $\gamma$, and then changing the tag of any end of $\gamma$ incident to a puncture $p$ for which $\psi(p) = -1$. 
The resulting action on tagged triangulations preserves the signed adjacency matrices, and embeds $\mcm \mcg_\bowtie(\mathbf{S,M})$ as a subgroup of  $\CMG(\mathbf{S,M})$, cf.~\cite{ASS}. In fact the following is true: 

\begin{prop}[Bridgeland-Smith]\label{ASSconjecture} The tagged mapping class group $\mcm \mcg _\bowtie (\mathbf{S,M})$ coincides with the cluster modular group $\CMG(\mathbf{S,M})$, unless $(\mathbf{S,M})$ is a sphere with four punctures, a once-punctured square, or a digon with one or two punctures. 
\end{prop}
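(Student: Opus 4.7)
The containment $\mcm\mcg_\bowtie(\mathbf{S,M}) \subseteq \CMG(\mathbf{S,M})$ is already recorded after Definition~\ref{markedmcgdefn}, so the plan is to prove the reverse containment. The strategy is to construct, for each cluster modular group element~$g$, a tagged mapping class that realizes it. Fix a tagged triangulation~$T_0$ of $(\mathbf{S,M})$, and let $T_1$ be a tagged triangulation representing $g(T_0)$. By Definition~\ref{CMGDefn}, the bijection $\starr(T_0) \to \starr(T_1)$ induced by $g$, i.e.\ a bijection between the arcs of $T_0$ and those of $T_1$, preserves signed adjacency matrices.

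The key step is the following rigidity lemma: whenever $T_0$ and $T_1$ are tagged triangulations of $(\mathbf{S,M})$ and there is a bijection between their arcs inducing an isomorphism of their signed adjacency matrices, this bijection is realized by a tagged mapping class, and this class is unique. The exceptional surfaces listed in the proposition are precisely those for which rigidity fails -- for them the signed adjacency matrix admits symmetries that do not come from any homeomorphism of the surface. Once the rigidity lemma is in hand, the assignment $g \mapsto \tilde{g}$ yields a well-defined group homomorphism $\CMG(\mathbf{S,M}) \to \mcm\mcg_\bowtie(\mathbf{S,M})$ that is inverse to the inclusion of tagged mapping classes into the cluster modular group.

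The main obstacle is proving the rigidity lemma. I would approach it combinatorially by first reducing to the case of an ordinary ideal triangulation of $(\mathbf{S,M})$ without tags: this reduction uses the involutions on $\mathbf{A}^\bowtie(\mathbf{S,M})$ that change tags at individual punctures, whose effect on signed adjacency matrices can be read off from the combinatorics of notched ends. For an ordinary triangulation~$T$, the signed adjacency matrix $B(T)$ encodes, for each arc, the triangles on either side together with the cyclic order of arcs around each interior vertex; piecing this information together recovers the surface as an oriented CW-complex whose triangulation is $T$. Any abstract isomorphism of signed adjacency matrices therefore extends to an orientation-preserving homeomorphism of the surface, i.e.\ a mapping class, and a case-by-case inspection of the small surfaces reveals exactly the exceptions listed.

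An alternative, more conceptual route -- closer to the actual Bridgeland--Smith argument -- associates to $(\mathbf{S,M})$ a CY3 triangulated category whose space of stability conditions is identified with a moduli space of meromorphic quadratic differentials on~$\mathbf{S}$. Cluster modular group elements act as autoequivalences on this category, and compatibility of this action with the stability-to-differential identification produces the desired map to the mapping class group. This route handles the exceptional cases transparently, since they correspond to moduli spaces with extra automorphisms that inflate the cluster modular group beyond $\mcm\mcg_\bowtie$.
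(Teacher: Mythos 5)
The paper does not prove this proposition at all: it is quoted as a known result, with a pointer to \cite{BridgelandSmith} (Proposition 8.5 and the discussion following it) and to Conjecture 1 of \cite{ASS}. So your ``alternative, more conceptual route'' via stability conditions and quadratic differentials is not an alternative --- it is the actual source the paper relies on, and correctly identifying it is the most that can be asked here. Your overall reduction is also sound in outline: the inclusion $\mcm\mcg_\bowtie(\mathbf{S,M})\subseteq\CMG(\mathbf{S,M})$ is recorded in the paper, an automorphism of the exchange graph preserving exchange matrices is determined by a single isomorphism of signed adjacency matrices $B(T_0)\cong B(T_1)$, and the whole content is the ``rigidity lemma'' that such an isomorphism is induced by a unique tagged mapping class outside the exceptional list.

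The gap is in your combinatorial proof of that rigidity lemma. The assertion that $B(T)$ ``encodes, for each arc, the triangles on either side together with the cyclic order of arcs around each interior vertex'' is not justified and is false as stated: the entry $b_{ij}$ is a signed \emph{count} of shared triangles, so two arcs bounding two triangles with opposite orientations contribute $0$ and become invisible; self-folded triangles and low-valence punctures further distort the dictionary between matrix entries and triangles. Indeed, the very existence of the exceptional surfaces shows that reconstruction of the triangulated surface from $B(T)$ can genuinely fail, so a proof must identify exactly where the reconstruction argument breaks, not merely ``inspect small surfaces'' after asserting it works. You would also need faithfulness of the action of $\mcm\mcg_\bowtie(\mathbf{S,M})$ on tagged triangulations to get the \emph{uniqueness} in your lemma (and hence that $g\mapsto\tilde g$ is a well-defined homomorphism); this is a separate nontrivial fact with its own exceptional cases. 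None of these points is fatal to the strategy --- this is essentially how the block-decomposition arguments of Fomin--Shapiro--Thurston proceed --- but as written the central step is asserted rather than proved.
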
  

Thus barring these exceptional cases, two tagged triangulations of $(\mathbf{S,M})$ have isomorphic quivers precisely when they are related by an element of the tagged mapping class group (see \cite[Proposition 8.5]{BridgelandSmith} and the subsequent discussion; see also \cite[Conjecture 1]{ASS}). In the exceptional cases listed in Proposition \ref{ASSconjecture}, the tagged mapping class group is a proper finite index subgroup of the cluster modular group. 

Motivated by Proposition \ref{ASSconjecture}, we set out to describe, for various choices of coefficients~$\mathbf{L}$, the quasi-automorphism group $\QAut_0(\mathbf{S,M,L})$ from Definition \ref{QAutDefn} as a subgroup of the tagged mapping class group. The main ingredient in our answer is a black-white coloring similar to one in the examples from Section \ref{QASecn}. 

\begin{defn}\label{colorCs} The \emph{even components} of $(\mathbf{S,M})$ are the punctures $C \in \ov{M}$ as well the as boundary components $C \subset \partial \mathbf{S}$ having an even number of cilia. 
%We think of a puncture as a shrunk boundary component with $0$ cilia. 
We let $r$ denote the number of even components, and label the even components $C_1,\dots,C_r$. For each even boundary component $C \subset \partial \mathbf{S}$, we color the cilia on $C$ black or white so that the colors alternate, i.e. adjacent cilia  have opposite colors. 
\end{defn}

Using the black-white coloring in Definition \ref{colorCs}, each tagged mapping class $g = (f,\psi)$ determines an $r \times r$ signed permutation matrix $\pi_g$ whose entries are indexed by the even components. The $(i,j)$ entry of $\pi_g$ is $0$ unless $f(C_i) = C_j$. If $C_j \subset \partial \mathbf{S}$ is a boundary component and $f(C_i) = C_j$, then the $(i,j)$ entry is $+1$ if $f$ sends black cilia on $C_i$ to black cilia on $C_j$, and is $-1$ if $f$ sends black cilia on $C_i$ to white cilia on $C_j$. When $C_i$ and $C_j$ are punctures, the sign of the $(i,j)$ entry is the sign $\psi(C_j)$. Not all signed permutation matrices will arise in this way  since $f$ can only permute components that have the same number of cilia.

%This sign is $+1$ if $E$ is on an even boundary component $C$, and its nearest neighboring cilium (traveling clockwise along $C$ from $E$) is black. The sign is $-1$ one if this nearest neighbor is white. If $E$ spirals around a puncture $C$, the sign is $+1$ if $E$ spirals counterclockwise around $C$ and $-1$ otherwise. 

\begin{defn}\label{signconvention}
Let $L$ be a lamination. For each curve $\aa$ in $L$, Figure \ref{colorsofendsfig} shows how to assign a sign to an end of $\aa$ that either lands on even boundary component or spirals around a puncture. At a puncture, the sign is chosen according to whether~$\aa$ spirals counterclockwise or clockwise into the puncture. At a boundary component, the sign is chosen according to whether the nearest neighboring cilium in the clockwise direction along~$C$ is black or white. An end on an odd component has zero sign. The \emph{pairing}~$p( L;C) $ of a lamination~$L$ with the even component~$C$ is the sum of all the signs associated to~$L$, i.e. the sum over all curves $\aa$ in $L$ of the signs of the two ends of $\aa$. We let~$\vec{p}(L) = (p(L;C_i))_{i = 1,\dots,r} \in \bbz^r$ denote the vector of pairings of~$L$ with the even components.% in~$(\mathbf{S,M})$.  
\end{defn}

Example~\ref{workedsignsexample} works out these signs for the annulus from Example~\ref{annulusclustereg}. 

\begin{figure}[ht]
\begin{center}
\begin{tikzpicture}

\def  \rsizeo{.6};
\def  \rsizet{1.5};
\draw [fill= lightgray] (0,0) circle [radius = \rsizeo];
\draw [fill = black] (0:\rsizeo) circle [radius = .07];
\draw [fill = white] (90:\rsizeo) circle [radius = .07];
\draw [fill = black] (180:\rsizeo) circle [radius = .07];
\draw [fill = white] (-90:\rsizeo) circle [radius = .07];
\draw [thick] (20:\rsizeo) to [out = 0, in = -90] (1.8*\rsizeo, 1.3*\rsizeo);
\draw [thick, dashed] (1.8*\rsizeo,1.5*\rsizeo)--(1.8*\rsizeo,2*\rsizeo);
\node at (2.2*\rsizeo,.2) {+1};

\begin{scope}[xshift = 3cm]
\draw [fill= lightgray] (0,0) circle [radius = \rsizeo];
\draw [fill = black] (0:\rsizeo) circle [radius = .07];
\draw [fill = white] (90:\rsizeo) circle [radius = .07];
\draw [fill = black] (180:\rsizeo) circle [radius = .07];
\draw [fill = white] (-90:\rsizeo) circle [radius = .07];
\draw [thick] (-35:\rsizeo) to [out = -10, in = -90] (1.8*\rsizeo, 1.3*\rsizeo);
\draw [thick, dashed] (1.8*\rsizeo,1.5*\rsizeo)--(1.8*\rsizeo,2*\rsizeo);
\node at (2.3*\rsizeo,.2) {-1};
\end{scope}

\begin{scope}[xshift = 9cm]
\node at (0,0) {$\bullet$};
\draw [thick] (0:1.2*\rsizeo) to [out = -90, in = 0] (-90:\rsizeo) to [out = 180, in = -90] (180:\rsizeo) to [out = 90, in = 180] (90:\rsizeo)
to [out = 0, in = 90] (0:.8*\rsizeo) to [out = -90, in = 0] (-90:.7*\rsizeo) to [out = 180, in = -90] (180:.7*\rsizeo);
\draw [thick, dashed] (0:1.2*\rsizeo)--(45:1.4*\rsizeo)--(55:1.6*\rsizeo)--(60:1.9*\rsizeo);
\draw [thick, dashed] (180:.7*\rsizeo) to [out = 90, in = 180] (90:.7*\rsizeo) to [out = 0, in = 90] (0:.55*\rsizeo);
\node at (2.2*\rsizeo,.2) {-1};
\end{scope}

\begin{scope}[xshift = 6cm]
\node at (0,0) {$\bullet$};
\draw [dashed,thick] (140:1.4*\rsizeo)--(120:2*\rsizeo);
\draw [thick] (145:1.4*\rsizeo) to [out =-90 , in = 90](180:1.2*\rsizeo) to [out = -90, in = 180] (-90:\rsizeo) to [out = 0, in = -90] (0:\rsizeo) to [out = 90, in = 0] (90:\rsizeo) to [out = 180, in = 90] (180:.9*\rsizeo) to [out = -90, in = 180] (-90:.75*\rsizeo) 
to [out = 0, in = -90] (0:.7*\rsizeo) to [out = 90, in = 0] (90:.6*\rsizeo) to [out = 180, in = 90] (180:.5*\rsizeo);
\draw [dashed, thick] (180:.5*\rsizeo) to [out = -90, in = 180] (-90:.5*\rsizeo) to [out = 0, in = -90](0:.4*\rsizeo);
\node at (1.6*\rsizeo,.2) {+1};
\end{scope}

\end{tikzpicture}
\caption{The conventions for assigning signs to each end of a curve that lands on an even boundary component (in this case, a boundary component with $4$ cilia) or spirals around a puncture. The pairing $p(L,C)$ is obtained by adding up all of these signs. \label{colorsofendsfig}}
\end{center}
\end{figure}

In addition to acting on tagged arcs, $\mcm \mcg_\bowtie(\mathbf{S,M})$ also acts on laminations $L$. A tagged mapping class $g = (f,\psi)$ acts by first performing the homeomorphism $f$ to $L$, and then changing the direction of spiral at each puncture $p$ for which $\psi(p) = -1$. This action preserves sheard coordinates in the sense that $\vec{b}(T,L) = \vec{b}(g(T),g(L))$ for a triangulation $T$ and lamination $L$. It is easy to see that $g$ acts on the vector of pairings by the matrix $\pi_g$, i.e. $\vec{p}(g (L)) = \pi_g \cdot \vec{p}(L)$ for a lamination $L$.

The next theorem is the main result of this section, describing $\QAut(\mathbf{S,M,L})$ inside the marked mapping class group in very concrete terms. 
\begin{thm}\label{whichgworkwithL} Suppose $(\mathbf{S,M})$ is not one of the four exceptional surfaces in~Proposition \ref{ASSconjecture}. Let $\mathbf{L}$ be a multi-lamination. Let $V_\mathbf{L} = \Span(\{\vec{p}(L) \colon L \in \mathbf{L}\}) \subset \bbz^r$ be the submodule spanned by the vectors of pairings associated to the laminations $L $ in $\mathbf{L}$. Then 
\begin{equation}\label{whichgworkeq}
\QAut_0(\mathbf{S,M,L}) = \{g \in \mcm \mcg_\bowtie(\mathbf{S,M}) \colon \pi_g(V_\mathbf{L})  = V_\mathbf{L}\}.  
\end{equation}
\end{thm}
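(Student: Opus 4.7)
The plan is to convert the condition $g \in \QAut_0$ into a linear-algebraic statement about row spans of extended exchange matrices, then translate that statement geometrically. First, by Proposition \ref{ASSconjecture} one has $\mcm\mcg_\bowtie(\mathbf{S,M}) = \CMG(\mathbf{S,M}) \supseteq \QAut_0(\mathbf{S,M,L})$, so the task is to decide which tagged mapping classes $g$ lie in $\QAut_0$. Fix a tagged triangulation $T$. By Lemma \ref{symmetryofspans} combined with Corollary \ref{geomtypeseedhom}, the class $g$ lies in $\QAut_0$ if and only if there is a quasi-homomorphism carrying the seed orbit at $T$ to that at $g(T)$, which in turn happens if and only if the $\bbz$-row span of $\tilde{B}(T)$ contains that of $\tilde{B}(g(T))$; since $\pi_g$ has finite order, this containment upgrades automatically to equality. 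The principal parts already coincide (tagged mapping classes preserve signed adjacency matrices), so the condition is equivalent to equality in the quotient $Q(T):=\bbz^n/\rowspan B(T)$ of the classes of the coefficient rows of the two matrices. Using the equivariance $\vec b(g(T),L)=\vec b(T,g^{-1}L)$ of shear coordinates, this becomes
\[
\Span_{\bbz}\bigl\{\vec b(T,L_i)\bmod\rowspan B(T)\bigr\} \;=\; \Span_{\bbz}\bigl\{\vec b(T,g^{-1}L_i)\bmod\rowspan B(T)\bigr\}.
\]

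The geometric crux is then a natural isomorphism $Q(T)\cong\bbz^r$, indexed by the even components $C_1,\dots,C_r$, under which the class of $\vec b(T,L)$ is identified with the pairing vector $\vec p(L)$. My plan for this is to produce, for each even component $C$, an explicit vector $v_C \in \ker B(T)^\top$ given as a signed indicator of the arcs of $T$ incident to $C$, with signs dictated by the coloring-and-spiraling conventions of Definition \ref{signconvention}; to verify by a local case analysis around $C$ that the family $\{v_{C_j}\}$ forms a $\bbz$-basis of $\ker B(T)^\top$; and to compute $\langle v_{C_j},\vec b(T,L)\rangle = p(L;C_j)$ by examining each end of each curve of $L$ separately, reducing to the local definitions of shear coordinates near an even boundary component and near a puncture.

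Granting that identification, the condition from the first paragraph becomes $\Span_\bbz\{\vec p(L_i)\}=\Span_\bbz\{\vec p(g^{-1}L_i)\}$ inside $\bbz^r$. The equivariance $\vec p(gL)=\pi_g\vec p(L)$ noted just before the theorem rewrites the right-hand side as $\pi_g^{-1}V_\mathbf{L}$, and the condition reads $V_\mathbf{L}=\pi_g^{-1}V_\mathbf{L}$, equivalent to $\pi_g V_\mathbf{L}=V_\mathbf{L}$, as desired.

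The hard part will be the geometric identification $Q(T)\cong\bbz^r$ in the middle step. Computing $\ker B(T)^\top$ uses the surface origin of $B(T)$ in an essential way: the construction of $v_C$ and the verification that $\{v_{C_j}\}$ is a basis must be carried out separately for punctures (where tagged ends, and the sign $\psi(p)$ of a tag, feed into the signs defining $v_C$) and for even boundary components (where the alternating black/white coloring of cilia is precisely what produces the cancellations placing $v_C$ in the kernel). Care will also be needed to rule out torsion in $Q(T)$ so that the isomorphism is one of free $\bbz$-modules of rank $r$; the once-punctured closed surface variant of Definition \ref{markedmcgdefn} may deserve a brief separate treatment.
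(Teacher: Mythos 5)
Your proposal is correct and follows essentially the same route as the paper: reduce membership in $\QAut_0$ to equality of row spans via Corollary \ref{geomtypeseedhom} and Lemma \ref{symmetryofspans}, then identify $\bbz^n/\rowspan B(T)$ with $\bbz^r$ using signed indicator vectors attached to the even components and show the induced pairing of $\vec b(T,L)$ is $\vec p(L)$. Your vectors $v_C$ and the two verifications you flag as the hard part are precisely the paper's $R_C$ in Lemma \ref{kernelbasisatClem} and the residue computation of Lemma \ref{annularnbhdlemma}, so the plan matches the paper's proof (via Proposition \ref{QHsofSurfaces}) step for step.
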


We prove Theorem \ref{whichgworkwithL} in Section \ref{SurfacesProofsSecn}. The subgroup of $\mcm \mcg_\bowtie(\mathbf{S,M})$ described in \eqref{whichgworkeq} only depends on the endpoint behavior of laminations -- it doesn't mention the topology of the surface, or how much curves wrap around the holes and handles of the surface. The map $g \mapsto \pi_g$ is a group homomorphism from $\mcm \mcg_\bowtie(\mathbf{S,M})$ to the group of signed permutation matrices. The subgroup in \eqref{whichgworkeq} is an inverse image of the subgroup of signed permutation matrices that fix $V_\mathbf{L}$ and therefore is always finite index in $\mcm \mcg_\bowtie(\mathbf{S,M})$. 

\begin{cor}\label{QAutBigcapCor}
Let $g$  be a tagged mapping class. If $\pi_g$ is plus or minus the identity matrix, then $g \in \QAut_0(\mathbf{S,M,L})$ for \emph{any choice} of multi-lamination $\mathbf{L}$. Otherwise, $g \notin \QAut_0(\mathbf{S,M,L})$ for \emph{some choice} of~$\mathbf{L}$. 
\end{cor}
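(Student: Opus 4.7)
The plan is to derive both halves of the corollary directly from Theorem~\ref{whichgworkwithL}, whose content is that $g \in \QAut_0(\mathbf{S,M,L})$ if and only if $\pi_g(V_\mathbf{L}) = V_\mathbf{L}$. The first half is immediate: the matrices $\pm I_r$ preserve every subgroup of $\bbz^r$ as a set, so if $\pi_g = \pm I_r$ then $\pi_g(V_\mathbf{L}) = V_\mathbf{L}$ for every multi-lamination $\mathbf{L}$.

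For the second half, assume $\pi_g \neq \pm I_r$; this in particular forces $r \geq 2$. Since $\pi_g$ is a signed permutation matrix, one of two cases applies: either (i) there exist distinct indices $i,j$ with $\pi_g(e_i) = \epsilon e_j$ for some $\epsilon \in \{\pm 1\}$, or (ii) $\pi_g$ is diagonal with both $+1$ and $-1$ entries on the diagonal. In each case I will exhibit a single lamination $L$ so that the multi-lamination $\mathbf{L}=(L)$ violates the criterion of Theorem~\ref{whichgworkwithL}.

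In case (i), I would take $L$ to consist of a single curve $\alpha$ whose two ends both contribute sign $+1$ to the even component $C_i$, giving $\vec{p}(L) = 2 e_i$. When $C_i$ is a puncture, $\alpha$ may be realized as a double spiral at $C_i$ (both ends spiraling counterclockwise); when $C_i$ is an even boundary component, $\alpha$ is a non-contractible arc with both endpoints on sign-$+1$ segments of $C_i$, routed around some other component of the surface to ensure non-contractibility. Then $V_\mathbf{L} = 2\bbz \cdot e_i$ and $\pi_g(V_\mathbf{L}) = 2 \bbz \cdot e_j$ are distinct by the linear independence of $e_i$ and $e_j$. In case (ii), choose $i, k$ with $\pi_g(e_i) = e_i$ and $\pi_g(e_k) = -e_k$, and take $L$ to be a single non-contractible curve from $C_i$ to $C_k$ whose two ends both contribute sign $+1$, so that $\vec{p}(L) = e_i + e_k$; then $\pi_g \cdot (e_i + e_k) = e_i - e_k$ does not lie in $\bbz(e_i + e_k)$, hence $\pi_g(V_\mathbf{L}) \neq V_\mathbf{L}$.

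The only nontrivial step, and the one I expect to be the main obstacle, is ensuring that the arcs described above can actually be drawn as genuine lamination curves --- non-self-intersecting, non-contractible, and not homotopic to a boundary segment. Both constructions have explicit local prescriptions at the endpoints, so this reduces to checking that on any surface permitted by Theorem~\ref{whichgworkwithL} with $r \geq 2$, one can always route an arc with the prescribed endpoint behavior around some other component to avoid contractibility. This follows from a short topological case analysis using the list of excluded surfaces in Proposition~\ref{ASSconjecture}: having at least two even components on a non-exceptional $(\mathbf{S,M})$ always provides enough room.
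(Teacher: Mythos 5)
Your proposal is correct and follows essentially the same route as the paper: both halves are read off from Theorem~\ref{whichgworkwithL}, and the second half uses the same two-case split (some index moved to a different index versus a diagonal signed permutation with mixed signs) with the same two test laminations (a curve with two positive ends on $C_i$ giving $\vec{p}(L)=2e_i$, and a curve joining $C_i$ to $C_k$ with both ends positive giving $\vec{p}(L)=e_i+e_k$). The realizability of these curves, which you flag as the remaining obstacle, is asserted without further comment in the paper as well, so your treatment is on par with the original.
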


\begin{rmk} The tagged mapping classes in Corollary \ref{QAutBigcapCor} are those that fix all even components setwise, and furthermore either preserve the black-white coloring of ends of curves, or simultaneously swap all colors. This group is generated by the following four types of elements (see~\cite{FarbMargalit} for generators of the mapping class group): Dehn twists about simple closed curves, homeomorphisms that permute odd components, fractional Dehn twists rotating the cilia on a given  boundary component by two units, and the \emph{tagged rotation}. This last element is the one that simultaneously changes tags at all punctures and rotates all boundary components by one unit. It was studied in  \cite{BrustleQiu}, where it was shown to coincide with the shift functor of a $2$-Calabi-Yau cluster category associated with the surface. 
\end{rmk}

\begin{proof}
If $\pi_g = \pm 1$, then $\pi_g$ clearly preserves $V_\mathbf{L}$ regardless of the choice of $\mathbf{L}$ and by Theorem \ref{whichgworkwithL} $g$ is in $\QAut_0(\mathbf{S,M,L})$ for any $\mathbf{L}$. 

If $\pi_g \neq \pm 1$, think of $\pi_g$ as a signed permutation $\sigma$ of $\{\pm 1,\dots,\pm r\}$ in the usual way. If there is any index $i \in [1,r]$ such that $\sigma(i) \neq \pm i$, then let $L$ be a lamination consisting of a curve with two black ends on $C_i$, satisfying $p(L;C)  = 2$. If there is no such index $i$, we can choose a pair of indices $i , j \in [1,r]$ such that $\sigma(i) = -i$ but $\sigma(j) = j$. In this case we let $L$ be a lamination consisting of a curve connecting the even components $C_i$ and $C_j$ by a curve that is black at both ends. In both of these two cases, we see that
$\pi_g (\vec{p}(L))$ is not in the span of $\vec{p}(L)$ and by Theorem \ref{whichgworkwithL}, $g \notin \QAut_0(\mathbf{S,M,L})$.  
\end{proof}

\begin{example}\label{workedsignsexample} We order the boundary components in Figure \ref{annulusclusterfig} so that the inner boundary is first. The vector of pairings for the lamination $L$ in Figure \ref{annulusclusterfig} is $\vec{p}(L)  = (-2,-2)$. Then $\rho$ and $\tau$ act on the vector of parings by swapping the sign of the first or second component respectively, and $\sigma$ acts by permuting the first and second component. The description of $\QAut_0(\mathbf{S,M,L})$ in Example \ref{annulusclustereg} matches the one in Theorem \ref{whichgworkwithL}. The subgroup of elements described in Corollary \ref{QAutBigcapCor} is a
a direct product $\bbz \times \bbz / 2 \bbz$ generated by 
$\rho^2$ and  $\rho \tau$. It has index $4$ in the cluster modular group.    
\end{example}

\begin{rmk} Theorem \ref{whichgworkwithL} can be modified in the case that $(\mathbf{S,M})$ is one of the exceptional surfaces in Proposition \ref{ASSconjecture}. Namely, the left hand side of \eqref{whichgworkeq} merely describes the subgroup of $\QAut_0(\mathbf{S,M,L})$ consisting of tagged mapping classes (that is, ignoring the exotic symmetries). For particular choices of coefficients, the ``extra'' elements of the cluster modular group might also be inside $\QAut_0$. 
\end{rmk}

\section{Proofs for Section \ref{SurfacesSecn}}\label{SurfacesProofsSecn}
The key result of this section is Proposition \ref{QHsofSurfaces} describing quasi-homomorphisms of cluster algebras from surfaces. Theorem \ref{whichgworkwithL} follows from it as a special case. 

Let $\mathbf{B}(\mathbf{S,M})$ denote the set of \emph{boundary segments} connecting adjacent cilia in $\partial S$. There is an especially natural choice of multi-lamination $\mathbf{L}_{\text{boundary}} = (L_\bb)_{\bb \in \mathbf{B(S,M)}}$ with one frozen variable for each boundary segment (see e.g. in \cite[Remark 15.8]{CATSII}). Lemma \ref{sccshearcoord} expresses the shear coordinates of certain laminations in terms of the extended exchange matrix determined by $\mathbf{L}_{\text{boundary}}$. It is patterned after \cite[Lemma $14.3$]{CATSII}.

\begin{lem}\label{sccshearcoord} Let $L$ be a lamination none of whose curves has an end that spirals at a puncture. Given an arc $\gamma$ the \emph{transverse measure} of $\gamma$ in $L$ is the minimal number of intersections of $\gamma$ with the curves in $L$. We denote it by $l(\gamma,L)$. For a boundary segment $\bb \in \mathbf{B}(\mathbf{S,M})$, we similarly let $l(\bb,L)$ denote the number of ends of the curves in $L$ on $\bb$. We let $\vec{l}(T,L) = (l(\star,L))_{\star \in T \cup \mathbf{B}(\mathbf{S,M})}$ be the row vector containing all of these transverse measures. 
Then 
\begin{equation}\label{sccshearcoordeq}
-2 \vec{b}(T,L) = \vec{l}(T,L) \tilde{B}(T,\mathbf{L}_\text{boundary}). 
\end{equation}
\end{lem}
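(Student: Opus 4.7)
The plan is to reduce to a single curve by linearity, then perform a local, triangle-by-triangle accounting. Both sides of (\ref{sccshearcoordeq}) are additive under disjoint unions of laminations: $\vec{b}(T, L_1 \sqcup L_2) = \vec{b}(T, L_1) + \vec{b}(T, L_2)$ since shear coordinates are signed intersection counts, and $\vec{l}(T, L_1 \sqcup L_2) = \vec{l}(T, L_1) + \vec{l}(T, L_2)$ since transverse measures and endpoint-counts are additive. So I may assume $L$ consists of a single curve $\alpha$. The hypothesis that $\alpha$ has no spiral ends means $\alpha$ is either a closed loop in the interior of $\mathbf{S}$ or an arc with both endpoints on $\partial \mathbf{S}$; in particular $\alpha$ stays away from the punctures and we can work with an untagged triangulation without any self-folded complications.

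Next I would cut $\alpha$ along its intersections with the arcs of $T$ into finitely many segments, each contained in a single triangle of $T$. Each intersection of $\alpha$ with an interior arc $\gamma'$ is shared between two consecutive segments on opposite sides of $\gamma'$, and each endpoint of $\alpha$ lies on some boundary segment $\bb$. Fix $\gamma \in T$. Both sides of (\ref{sccshearcoordeq}) can be rewritten as sums of contributions indexed by the segments (equivalently, grouped by triangles of $T$): the left-hand side $-2 b(T, \alpha; \gamma)$ gets contributions from the two triangles adjacent to $\gamma$ via the S/Z-shaped traversal count defining the shear coordinate, while the right-hand side $\sum_\star l(\star, \alpha) b_{\star, \gamma}$ receives, from each triangle $\Delta$, the sum over segment-endpoints in $\Delta$ of the elementary shear coordinates $b_{s, \gamma}$ of the sides $s$ of $\Delta$. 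Since every segment-endpoint belongs to a unique triangle, the identity decouples into a finite per-triangle check.

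The triangle-level check is then an enumeration organized by which sides of $\Delta$ are interior arcs versus boundary segments, how the segment of $\alpha$ inside $\Delta$ runs between them, and whether $\gamma$ is or is not a side of $\Delta$. The factor $-2$ on the left arises naturally because each intersection point $\alpha \cap \gamma'$ is shared by two segments and so is counted twice in the right-hand sum (once per segment endpoint sitting on $\gamma'$). I anticipate the main obstacle to be precise sign tracking, particularly for segments terminating on a boundary segment $\bb$: the elementary lamination $L_\bb$ is defined precisely so that its shear coordinate $b(T, L_\bb; \gamma)$ records the ``missing'' half-quadrilateral contribution one would see if $\bb$ were an interior arc, so one must verify that the signs produced by the $L_\bb$'s are exactly those needed to complete the signed-intersection count of $\alpha$ with $Q_\gamma$. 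Once this bookkeeping is confirmed case by case, the per-triangle identities sum to (\ref{sccshearcoordeq}); the calculation runs in parallel to that of \cite[Lemma 14.3]{CATSII}, with the spiral-free hypothesis removing all self-folded-triangle subtleties.
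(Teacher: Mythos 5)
Your plan is essentially the paper's proof: reduce by linearity to a single curve, then verify the identity locally by matching each crossing's contribution to the shear coordinate of a fixed $\gamma$ against the corresponding entries of $\tilde{B}(T,\mathbf{L}_{\text{boundary}})$ weighted by transverse measures --- the paper organizes this by S/Z-crossings of the quadrilateral with diagonal $\gamma$, you by triangle segments, which is the same case analysis with the factor of $2$ accounted for in the same way. Both versions leave the final sign-by-sign enumeration (including the ends landing on boundary segments) as a routine check, so the proposal matches the paper's argument.
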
 
 
\begin{proof} We check that the $\gamma_0$ components of the left and right hand sides of \eqref{sccshearcoord} are equal, where $\gamma_0 \in T$.  Let $\gamma_1,\dots,\gamma_4$ be the quadrilateral containing $\gamma_0$ (number in clockwise order). Some of the $\gamma_i$ may be boundary segments. Each time $\aa$ shears across the quadrilateral in an `$S$' crossing, it contributes $+1$ to the left hand side, while contributing $-\frac{1}{2}(-1+-1)$ to the right hand side. And so on.    
\end{proof}

This argument is like \cite[Lemma 14.13]{CATSII} but simpler because our we are not dealing with spirals at the puncture, for which $l(T,L) = \infty$. Comparing \eqref{sccshearcoordeq} with \eqref{newbtilde}, we see that if $\mathbf{L}$ is any multi-lamination none of whose curves spiral at punctures, then there is a quasi-homomorphism from $\mca(\mathbf{S,M},\mathbf{L}_\text{boundary})$ to $\mca(\mathbf{S,M,L})$. A version of Lemma \ref{sccshearcoord} allowing for spirals at punctures would involve the extended exchange matrix $\tilde{B}(\ov{T},\ov{L})$ on the \emph{fully opened surface} $(\ov{\mathbf{S,M}})$, where $\ov{L}$ and $\ov{T}$ are lifts of $L$ and $T$ to the opened surface (see \cite[Sections 9,14]{CATSII} for details). The corresponding version of \eqref{sccshearcoordeq} determines a quasi-homomorphism from $\mca(\ov{\mathbf{S,M}},\ov{\mathbf{L}})$ to $\mca(\mathbf{S,M,L})$. 

\medskip

Our next step is to to describe the row span of the signed adjacency matrices $B(T)$. It strengthens \cite[Theorem 14.3]{CATSI} which states that the corank of $B(T)$ is the number of even components. The description requires associating a sign to the ends of arcs $\gamma \in A^\bowtie(\mathbf{S,M})$, in a similar fashion as was done for the ends of curves in Definition \ref{signconvention}. Namely if $\gamma$ has an endpoint on a boundary component $C \subset \partial \mathbf{S}$, the endpoint gets sign $\pm 1$ if its endpoint is a black or white cilium respectively. If the endpoint is on a puncture $C \in \ov{M}$, the sign is $\pm 1$ if the end is plain or tagged respectively. An endpoint on an odd component gets sign $0$. The pairing $p(\gamma;C)$ of $\gamma$ with $C$ is the sum of the signs of the ends of $\gamma$ that reside on $C$, and the vector of pairings is $\vec{p}(\gamma) = (p(\gamma; C_i))_{i = 1, \dots, r}$. This pairing satisfies $p(\gamma;C) = p(L_\gamma;C)$ where $L_\gamma$ is the \emph{elementary lamination} determined by $\gamma$ (cf.\ \cite[Definition 17.2]{CATSII}, see also \cite[Section 5]{ReadingSurfaces}).

\begin{lem}\label{kernelbasisatClem}
For a tagged triangulation $T$, let $\bbq(T) = \bbq(\gamma \colon \gamma \in T)$ be $\bbq$-vector space of row vectors with entries indexed by $\gamma \in T$. Let $\bbq(T)^* = \bbq(\gamma^* \colon \gamma \in T)$ be the dual space of column vectors with entries indexed by the dual basis $\{\gamma^* \colon \gamma \in T \} $. 
%The submodule $\bbq(T)B(T) \subset \bbq(T)$ is the row span of $B(T)$. 

For each even component $C$, consider the column vector
\begin{equation}\label{kernelbasisatC}
R_C = \sum_{\gamma \in T }p(\gamma;C)\gamma^* \in \bbq(T)^*. 
\end{equation} 

Then a vector $\vec{a} \in \bbq(T)$ is in the the row span of $B(T)$ if and only if the dot product $\vec{a} \cdot R_C $ vanishes for all $C$. %That is, the $\{\overset{\downarrow}{R}_C \colon \text{ $C$ even }\}$ is a basis for the dual space to $\bbq(T)B(T) \subset \bbq(T)$ inside $\bbq(T^*)$. 
\end{lem}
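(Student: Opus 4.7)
The strategy is to show that $\{R_C\}_{C \text{ even}}$ is a basis of $\ker B(T) \subset \bbq(T)^*$. Since $B(T)$ is skew-symmetric, a row vector $\vec{a}$ lies in the row span of $B(T)$ if and only if $\vec{a}$ annihilates $\ker B(T)$ under the standard dot product, and by~\cite[Theorem 14.3]{CATSI} we have $\dim \ker B(T) = r$. So the lemma reduces to verifying (a) that each $R_C$ lies in $\ker B(T)$, and (b) that $\{R_C\}_C$ is linearly independent.

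For step (a), I would compute, for each $\gamma_0 \in T$,
\[
(B(T) R_C)_{\gamma_0} \;=\; \sum_{\gamma \in T} b_{\gamma_0,\gamma}(T)\, p(\gamma;C),
\]
by decomposing $B(T) = \sum_\Delta B^\Delta$ into triangle contributions and writing $p(\gamma;C) = \sum_e \epsilon(e;C)$ as a sum over ends $e$ of $\gamma$ landing on $C$. Each ideal triangle $\Delta$ containing $\gamma_0$, with other sides $\gamma',\gamma''$ (say $\gamma'$ immediately clockwise of $\gamma_0$ in $\Delta$), contributes $p(\gamma';C)-p(\gamma'';C)$ to the sum. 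Regrouping the resulting double sum by the vertex $v$ at which an end sits: ends at the apex of a given triangle $\Delta$ occur once via $\gamma'$ and once via $\gamma''$ with opposite signs and cancel; ends at an endpoint of $\gamma_0$ lying on $C$ appear via the two triangles on either side of $\gamma_0$, which carry opposite signs of $\epsilon(v;C)$ because the surface orientation reverses the clockwise-next direction when one crosses $\gamma_0$. Everything cancels, giving $(B(T)R_C)_{\gamma_0}=0$. Self-folded triangles and loops based at a puncture fit the same pattern once one invokes the convention that a tagged end contributes the opposite sign of a plain end.

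For step (b), an identity $\sum_C c_C R_C = 0$ translates, in view of $R_C = \sum_{e \text{ on }C} \epsilon(e;C)\,\gamma(e)^*$, into the condition that for every arc $\gamma \in T$ one has $\sum_{e \text{ end of } \gamma \text{ on an even component}} c_{C(e)}\,\epsilon(e;C(e)) = 0$. Arcs having exactly one end on an even component immediately kill the corresponding $c_{C(e)}$; arcs with both ends on the same $C$ at equal-signed cilia force $2c_C = 0$; and any remaining constraints propagate through the adjacency graph whose vertices are the even components of $(\mathbf{S,M})$ and whose edges are arcs of $T$ joining them. Since $(\mathbf{S,M})$ avoids the excluded degenerate cases and $T$ is a tagged triangulation, a short case analysis (or, if needed, a reduction to a more favorable tagged triangulation via mutation, noting that the $R_C$ transform under a mutation by an invertible change of basis that preserves linear independence) shows these constraints force every $c_C = 0$. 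The main obstacle is the sign bookkeeping in step (a), particularly near self-folded triangles and tagged ends at punctures; once the orientation convention on $B^\Delta$ and the tag/color convention on $\epsilon(e;C)$ are aligned, the pairwise cancellation itself is routine.
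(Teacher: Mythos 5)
Your reduction — show that the vectors $R_C$ form a basis of $\ker B(T)$, using skew-symmetry of $B(T)$ and the corank count from \cite[Theorem 14.3]{CATSI} — is exactly the paper's strategy, but the two halves fare differently.

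For membership in the kernel, you propose a direct triangle-by-triangle cancellation. The paper instead observes that the lamination consisting of a tiny closed curve contractible to a puncture $C$ (respectively, the tiny laminations contractible to the individual cilia of an even boundary component) has vanishing shear coordinate vector, and then reads off $B(T)\,R_C=0$ from the identity \eqref{sccshearcoordeq} of Lemma \ref{sccshearcoord}. This sidesteps most of the sign bookkeeping that you yourself flag as the ``main obstacle''; the paper only has to treat separately the puncture incident to just a plain/tagged pair of arcs. Your route is plausible, but you have deferred precisely the delicate part (self-folded triangles, tagged ends at punctures), so as written this half is a sketch rather than an argument.

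The genuine gap is in linear independence. From $\sum_C c_C R_C = 0$ you extract, for each arc, a relation $\epsilon(e_1)c_{C(e_1)}+\epsilon(e_2)c_{C(e_2)}=0$ between (at most) two coefficients, and then assert that these ``propagate through the adjacency graph'' to force all $c_C=0$ after ``a short case analysis.'' They do not in general: relations of the form $a_{v_1}+a_{v_2}=0$ along the edges of a graph determine the unknowns only up to one free sign per connected component whenever the graph is bipartite, and on a closed surface with punctures there are no odd components at all, so your first two mechanisms (an arc with one end on an odd component; an arc with two equal-signed ends on the same $C$) may produce nothing. The missing idea is the paper's triangle argument: set $a_v=\epsilon(v)c_{C(v)}$ for every marked point $v$ (zero on odd components); then every side of every triangle of $T$ — including boundary segments, where the alternating coloring makes the relation automatic — yields $a_{v_1}+a_{v_2}=0$, and within a single triangle with vertices $v_1,v_2,v_3$ the three pairwise relations force $a_{v_1}=a_{v_2}=a_{v_3}=0$. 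The triangles supply the odd cycles that break the bipartite degeneracy; without them, or an equivalent device, your propagation cannot close. (Your fallback of transporting the $R_C$ through a mutation to a ``more favorable'' triangulation also needs justification, since the lemma is being proved triangulation by triangulation and you would have to know how the $R_C$ transform.)
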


Said differently, the map $\gamma \mapsto (\gamma,C_i)$ for $C_i$ an even component determines a $\bbz$-grading on the coefficient-free cluster algebra $\mca(\mathbf{S,M})$. These gradings form a \emph{standard $\bbz^r$-grading} as $i$ varies from $1,\dots,r$ (a standard grading is one that spans the kernel of the $B$-matrices, see \cite{Grabowski}). We will not rely on gradings in what follows. Lemma \ref{kernelbasisatClem} is proved at the end of this section.

%Said differently, the map $\gamma \mapsto (\gamma,C_i)_{i=1}^r$ determines a \emph{standard} $\bbz^{r}$ grading on $\mca(\mathbf{S,M})$ cf.~\cite{Grabowski} and Remark \ref{propandgradings}. The word \emph{standard} means that the rows of the corresponding grading matrix form a basis for the left kernel of $B$.  We will not rely on the interpretation in terms of gradings in what follows. We suspect Lemma \ref{kernelbasisatClem} is familiar to some readers (it is partially hinted at in \cite[Remark 14.8]{CATSII}), so we defer its proof to the end of this section. 

\vspace{.3cm}

For a vector $\vec{a} \in \bbq(T)$, the \emph{residue of $\vec{a}$ around $C$ in $T$} is the dot product $\vec{a} \cdot R_C = \sum_{\gamma \in T }(\gamma,C) a_\gamma$. Writing $\vec{a}$ as the shear coordinate of a lamination $L$, the residue has the following simple description.   
  
\begin{lemma}\label{annularnbhdlemma} 
Let $L$ be a lamination and $C$ an even component. Then the residue of $\vec{b}(T,L)$ around $C$ is the pairing $p(L;C)$ from Definition~\ref{signconvention}.
\end{lemma}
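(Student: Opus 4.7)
The plan is to reduce the identity to a purely local statement at each marked point of $C$ and verify it by a direct geometric analysis.

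First, I would exploit the fact that both $\vec{b}(T,L)$ and $p(L;C)$ are additive under disjoint unions of laminations, reducing to the case where $L$ consists of a single curve $\alpha$. Expanding $p(\gamma;C)$ as a sum of signs over the ends of $\gamma$ on $C$ and swapping the two summations transforms $\vec{b}(T,\alpha)\cdot R_C = \sum_{\gamma\in T} p(\gamma;C)\,b_\gamma(T,\alpha)$ into a sum indexed by the marked points $v$ of $C$,
\[
\sum_{v \in C \cap \mathbf{M}}\ \sum_{\gamma\in T,\,v\in\gamma} \varepsilon(v,\gamma)\, b_\gamma(T,\alpha),
\]
where $\varepsilon(v,\gamma) \in \{\pm 1\}$ records the sign at the $v$-end of $\gamma$ (depending only on the color of $v$ when $v$ is a cilium, and on the tag of $\gamma$ at $v$ when $v$ is a puncture). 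This reduces the problem to the local claim that, for each $v$, the inner sum records the number of ends of $\alpha$ at $v$ counted with exactly the signs in Figure~\ref{colorsofendsfig}.

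Next I would prove this local claim by inspecting a small disk neighborhood $U$ of $v$. Listing the arcs of $T$ incident to $v$ in cyclic order (together with the two adjacent boundary segments when $v$ is a cilium) partitions $U$ into triangular sectors. Any piece of $\alpha$ that enters and exits $U$ without terminating at $v$ crosses a consecutive run of incident arcs, and the S/Z crossing signs along this run telescope to zero; only pieces of $\alpha$ whose ends are at $v$ survive. For a cilium $v$, a terminating piece produces a residual $\pm 1$ according to whether its nearest boundary neighbor along $\partial\mathbf{S}$ is the clockwise or counterclockwise one, which combined with $\varepsilon(v,\gamma)$ yields exactly the contribution dictated by Figure~\ref{colorsofendsfig}. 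For a puncture $v$, the spiraling end of $\alpha$ produces an analogous $\pm 1$ governed by the direction of the spiral.

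The principal obstacle is the puncture case, where $\alpha$ spirals infinitely around $v$ yet only finitely many arcs of $T$ pick up a nonzero shear contribution, so the telescoping has to be justified carefully. A convenient way to bypass this is to pass to the opened surface $(\overline{\mathbf{S,M}})$ as in the discussion following Lemma~\ref{sccshearcoord}: after opening, each spiral becomes an honest endpoint on a new boundary component, reducing every puncture case to an already-handled cilium case. An alternative route, available whenever $L$ has no spirals, is to pair the identity $-2\vec{b}(T,L) = \vec{l}(T,L)\,\tilde{B}(T,\mathbf{L}_{\text{boundary}})$ of Lemma~\ref{sccshearcoord} on the right with $R_C$, invoke Lemma~\ref{kernelbasisatClem} to annihilate the $B(T)$-rows, and evaluate the remaining explicit sum over boundary segments; the result matches $-2p(L;C)$ by inspection of Figure~\ref{colorsofendsfig}.
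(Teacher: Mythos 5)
Your reorganization of the residue as a sum over marked points of $C$, followed by a telescoping argument in the star of each marked point, is a genuinely different route from the paper's. The paper instead restricts to the ``annular neighborhood'' of $C$ (all triangles with a vertex on $C$, lifted to a finite cover if necessary) and then splits into two cases: for a puncture the neighborhood is a once-punctured polygon and the residue is read off by direct inspection (nonzero only for spiraling curves, value $\pm 1$ by spiral direction); for a boundary component it runs exactly your ``alternative route'' --- pair \eqref{sccshearcoordeq} with $R_C$, kill the $B(T)$-rows, and evaluate the leftover sum \eqref{residueinannulus} locally at each boundary segment carrying an end of $L$. Your marked-point telescoping is a legitimate and arguably more uniform way to handle the boundary case (it localizes at cilia rather than at segments, and the cancellation for through-strands is the standard shear-coordinate fact that corner-cutting crossings contribute $0$); one bookkeeping point to watch is that the sign of Definition~\ref{signconvention} attaches an end of $L$ on a segment $\bb$ to only one of the two cilia of $\bb$, so the residuals you collect at the two endpoints of $\bb$ must be shown to combine to the single sign in Figure~\ref{colorsofendsfig} (the alternation of colors makes this work out, but it is not automatic).

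The genuine weak point is the puncture case, and neither of your proposed escapes actually closes it. Passing to the opened surface does not reduce to ``an already-handled cilium case'': the new boundary circle created by opening a puncture carries a \emph{single} marked point, hence is an odd component with no black/white alternation, and relating shear coordinates and residues across the opening requires the $\tilde{B}(\ov{T},\ov{L})$ machinery of \cite{CATSII} rather than a formal substitution. Your second escape, via Lemma~\ref{sccshearcoord}, explicitly excludes spirals, which is precisely the case that produces a nonzero pairing at a puncture. The cleanest fix stays inside your own framework: a spiraling strand crosses the radii of the once-punctured polygon in an infinite consecutive run with no ``last arc,'' all but finitely many crossings are double corner-cuts contributing $0$, and the telescoping therefore leaves only the entry contribution $\pm 1$, with sign given by the spiral direction --- which is also, in effect, the ``inspection'' the paper performs. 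You should either carry out that computation or cite it; as written, the puncture case is asserted rather than proved.
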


\begin{proof}
The residue is computed in terms of the shear coordinates of arcs adjacent to $C$. To compute these shear coordinates, rather than considering the entire surface, we can focus on the set of triangles having at least one vertex on $C$. Lifting to a finite cover of $\mathbf{S}$ perhaps (in order to remove interesting topology nearby $C$ that is irrelevant to computing the residue) this union of triangles will either be a triangulated annulus (when $C \subset \partial \mathbf{S}$ is a boundary component) or a once-punctured $n$-gon for some $n$ (if $C$ is a puncture). We call this set of triangles the \emph{annnular neighborhood} of $C$. Even when $L$ consists of a single curve, the intersection of $L$ with this annular neighborhood might consist of several curves. By the linearity of shear coordinates and residues, it suffices to consider the case that $L$ consists of a single curve in the annular neighborhood. 

When $C$ is a puncture, its annular neighborhood is a punctured disc with a triangulation all of whose arcs are radii joining the puncture to the boundary of the disc. 
By inspection, a curve $L$ contributes nonzero residue at $C$ if and only it spirals at $C$, and the value of this residue is $\pm 1$ according to whether it spirals counterclockwise or clockwise respectively as claimed. 

When $C$ is a even boundary component, we compute the residue of $\vec{b}(T,L)$ using the right hand side of \eqref{sccshearcoord}. We split up this right hand side into two terms by splitting up $\vec{l}(T,L)$ as a concatenation of $(l(\gamma,L)_{\gamma \in T}$ and  $(l(\beta,L)_{\beta \in \partial \mathbf{S}}$, and performing the matrix multiplication with $\tilde{B}$ in block form. The first term in this expression has zero residue around $C$ since it is a linear combination of the rows of $B(T)$. What's left over is a sum
\begin{equation}\label{residueinannulus}
-\frac{1}{2}\sum_{\gamma \in T, \bb \in \mathbf{B(S,M)} } p(\gamma;C)l(\bb,L) B_{\bb,\gamma}. 
\end{equation}

We claim the sum above evaluates to $p(L; C)$. For the sum to be nonzero, $L$ must have a nonzero end at some segment $\bb = [v_{i-1},v_{i}]$ with $v_{i-1},v_i$ in clockwise order. This segment $\bb$ is contained in a unique triangle in $T$. Call the other two sides in this triangle $\gamma_{i-1}$ and $\gamma_i$, whose endpoint on $C$ is $v_{i-1}$ and $v_i$ respectively. There are cases according to whether either of these sides is a boundary segment. If neither is, then $p(\gamma_{i-1}; C) B_{\bb,\gamma_{i-1}} = p(\gamma_{i}; C)B_{\bb,\gamma_{i}} $ is $\pm 1$ according to whether $v_i$ is white or black. The total contribution to \eqref{residueinannulus} is $ p(C;L)$. In the degenerate case that  $\gamma_i$ is a boundary segment, it does not contribute to \eqref{residueinannulus}, but $p(\gamma_{i-1}; C) = 2$ and this effect is cancelled out, and so on. 
\end{proof}

\begin{prop}\label{QHsofSurfaces} Suppose $(\mathbf{S,M})$ is not among the four listed exceptions in Proposition~\ref{ASSconjecture}. Let $\mathbf{L},\mathbf{L'}$ be multi-laminations on $(\mathbf{S,M})$ and recall the submodules $V_\mathbf{L}$ and $V_\mathbf{L'}$ from Theorem~\ref{whichgworkwithL}. Let $g \in \mcm \mcg_\bowtie(\mathbf{S,M})$ be a tagged mapping class and $\pi_g$ the corresponding signed permutation matrix. The following are equivalent: 
\begin{itemize}
\item there is a quasi-homomorphism $\Psi$ from $\mca(\mathbf{S,M,L})$ to $\mca(\mathbf{S,M,L'})$ whose map on tagged triangulations is $T \mapsto g (T)$ (that is, $\Psi(\Sigma(T)) \sim \ov{\Sigma}(g(T))$),
\item $V_{\mathbf{L'}} \subset \pi_g(V_{\mathbf{L}})$.
\end{itemize}
\end{prop}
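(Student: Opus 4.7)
The plan is to apply Corollary \ref{geomtypeseedhom} at a single pair of tagged triangulations $(T, g(T))$. Since $g$ is a tagged mapping class, the identification of arcs of $g(T)$ with arcs of $T$ via $g$ carries $B(g(T))$ onto $B(T)$, so the ``principal parts agree'' hypothesis of the corollary is automatic. By Proposition \ref{basicqhpropsi}, the existence of $\Psi$ with $\Psi(\Sigma(T)) \sim \ov{\Sigma}(g(T))$ is equivalent to the existence statement of Corollary \ref{geomtypeseedhom} at this single pair of seeds, which in turn amounts to the containment of integer row spans $\rowspan \tilde{B}(g(T), \mathbf{L'}) \subset \rowspan \tilde{B}(T, \mathbf{L})$. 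Using the mapping-class invariance of shear coordinates $\vec{b}(g(T), L') = \vec{b}(T, g^{-1}(L'))$, the coefficient rows of $\tilde{B}(g(T), \mathbf{L'})$, re-expressed in the basis of arcs of $T$, become the vectors $\vec{b}(T, g^{-1}(L'_j))$.

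Next I would reduce the row-span containment modulo the rows of $B(T)$. By Lemma \ref{kernelbasisatClem}, the row span of $B(T)$ coincides (rationally) with the kernel of the residue map $\vec{a} \mapsto (\vec{a} \cdot R_{C_i})_{i=1}^r \in \bbq^r$, so the quotient is identified with $\bbq^r$. By Lemma \ref{annularnbhdlemma}, the image in this quotient of any shear coordinate vector $\vec{b}(T, L)$ is exactly the pairing vector $\vec{p}(L)$. Hence the row-span containment translates to the condition $\vec{p}(g^{-1}(L'_j)) \in V_\mathbf{L}$ for every $j$. Using the identity $\vec{p}(g^{-1}(L')) = \pi_g^{-1} \vec{p}(L')$ (which follows from $\vec{p}(g(L)) = \pi_g \vec{p}(L)$ and $\pi_{g^{-1}} = \pi_g^{-1}$), this rearranges to $\pi_g^{-1}(V_{\mathbf{L'}}) \subset V_\mathbf{L}$, equivalently $V_{\mathbf{L'}} \subset \pi_g(V_\mathbf{L})$, as desired.

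Theorem \ref{whichgworkwithL} then follows by applying Proposition \ref{QHsofSurfaces} with $\mathbf{L'} = \mathbf{L}$ in both directions: the tagged mapping class $g$ lies in $\QAut_0(\mathbf{S,M,L})$ iff quasi-homomorphisms realizing both $g$ and $g^{-1}$ exist on the self-pattern, i.e.\ iff $V_\mathbf{L} \subset \pi_g(V_\mathbf{L})$ and $V_\mathbf{L} \subset \pi_g^{-1}(V_\mathbf{L})$, or equivalently $\pi_g(V_\mathbf{L}) = V_\mathbf{L}$.

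The main technical obstacle I anticipate is the integer-versus-rational distinction: Lemma \ref{kernelbasisatClem} is stated over $\bbq$, while Corollary \ref{geomtypeseedhom} and the statement of Theorem \ref{whichgworkwithL} are formulated for integer spans. The cokernel $\bbz^n / \rowspan_\bbz B(T)$ may have torsion, so passing from ``residues agree'' to ``actually in the integer row span'' requires care. I expect this to be handled either by establishing an integral refinement of Lemma \ref{kernelbasisatClem}, based on the observation that the residues $\vec{p}(L)$ attained by simple boundary and puncture laminations already surject onto $\bbz^r$, or by invoking Remark \ref{QRowSpans} to work in the Puiseux tropical semifield throughout (which costs nothing for the purposes of Proposition \ref{QHsofSurfaces} since we are tracking proportionality classes of cluster variables).
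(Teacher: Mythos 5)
Your argument follows the paper's own proof almost line for line: both reduce via Corollary~\ref{geomtypeseedhom} and Proposition~\ref{basicqhpropsi} to a row-span containment at the single pair of seeds $(T, g(T))$, use the mapping-class invariance $\vec{b}(g(T),L') = \vec{b}(T,g^{-1}(L'))$ to move everything to the fixed triangulation $T$, and then translate the row-span condition into a pairing condition via Lemmas~\ref{kernelbasisatClem} and~\ref{annularnbhdlemma} together with the identity $\vec{p}(g(L)) = \pi_g \vec{p}(L)$.

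The integer-versus-rational concern you flag at the end is a genuine subtlety, and one that the paper's proof also glosses over: Lemma~\ref{kernelbasisatClem} characterizes the $\bbq$-row span of $B(T)$ as the kernel of the residue map, while Corollary~\ref{geomtypeseedhom} is about containment of \emph{integer} row spans, and the cokernel $\bbz^n/\rowspan_{\bbz} B(T)$ can have torsion. Your observation that the forward implication (``row-span containment $\Rightarrow$ pairing condition'') survives intact over $\bbz$, and that only the converse needs care, is exactly right. For surfaces with boundary and at least one even boundary component, the vectors $R_C$ (with $\pm 1$ entries) tend to make $\ker(\text{res}) = \rowspan_{\bbz} B(T)$ on the nose, as in the annulus example; but for once-punctured closed surfaces all entries of $B(T)$ and of $R_C$ are even, so $\ker(\text{res}) \supsetneq \rowspan_{\bbz} B(T)$ and the argument over $\bbq$ does not immediately descend. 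Of your two suggested fixes, the Puiseux route of Remark~\ref{QRowSpans} is the cleanest, though it weakens ``rescaled by an element of $\ov{\bfp}$'' to rescaling by rational powers of frozens; an integral refinement of Lemma~\ref{kernelbasisatClem} would have to treat the closed once-punctured cases separately.
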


\begin{proof}[Proof of Proposition \ref{QHsofSurfaces}] A quasi-homomorphism $\Psi$ from $\mca(\mathbf{S,M,L})$ to $\mca(\mathbf{S,M,L'})$ is determined by a pair of tagged triangulations $T$ and $T'$, such that $\tilde{B}(T,\mathbf{L})$ and $\tilde{B}(T',\mathbf{L'})$ are related as in \eqref{newbtilde}. Since the principal parts of these matrices agree, by Proposition~\ref{ASSconjecture} there is a tagged mapping class $g$ such that $g(T) = T'$. Furthermore, for each lamination $L' \in \mathbf{L'}$, the vector $\vec{b}(T',L')$ must be in $\Span(\{\vec{b}(T,L) \colon L \in \mathbf{L}\})$. Since $\vec{b}(T',L') = \vec{b}(T,g^{-1} (L'))$, by Lemma \ref{kernelbasisatClem}, it is equivalent to find a linear combination of $\vec{b}(T,g^{-1}(L'))$ and $\{\vec{b}(T,L) \colon L \in \mathbf{L}\}$ that has zero residue around every even component. Proposition~\ref{QHsofSurfaces} follows now from Lemma \eqref{annularnbhdlemma} and the fact that $g$ acts on a vector of pairings by the matrix $\pi_g$. 
\end{proof}

\begin{proof}[Proof of Lemma \ref{kernelbasisatClem}]
Restating the Lemma, we seek to show that the $R_C$ form a basis for the dual space to the row span.% $\bbq(T)B(T)$.  %as a subspace of $\bbq(T^*)$. 
We begin by verifying each of these vectors pair to zero with the row span. 

First, we check this when $C$ is a puncture. We begin with the case that all of the arcs in $T$ are untagged at $C$. We need to check that $
\sum_{\gamma \in T} p(\gamma;C) B(T)_{\gamma',\gamma}$ vanishes for each $\gamma' \in T$. Indeed, letting $L $ be the lamination consisting of a tiny simple closed curve contractible to $C$, the shear coordinate vector $\vec{b}(T,L)$ is clearly  $0$. Now we apply \eqref{sccshearcoord} for this choice of $L$: the $\gamma'^{\text{th}}$ component of \eqref{sccshearcoord} says $ 0 = \sum_{\gamma \in T}p(\gamma ; C) B(T)_{\gamma,\gamma'}$ as desired, using the fact that $l(\gamma,L) = 0$ if $\gamma$ is a boundary segment. The argument when all arcs are tagged at $C$ is identical. If $C$ is incident to exactly two arcs, namely the plain and tagged version of the same arc, then $R_C$ follows from \cite[Definition $9.6$]{CATSI} (or a calculation in a once-punctured digon). 

Second we check this when $C \subset \partial \mathbf{S}$ is a boundary component. Number the cilia on $C$ by $v_1, \dots ,v_{2m}$. For each $i \in [1,2m]$, let $L_i$ be a tiny lamination contractible to $v_i$ -- its two endpoints are on the two boundary segments adjacent to $v_i$. Again, $\vec{b}(T,L_i)$ is clearly $0$ and in particular $\sum_{\text{$v_i$ black}} \vec{b}(T,L_i) = \sum_{\text{$v_i$ white}} \vec{b}(T,L_i)$. Summing over the corresponding right hand sides of \eqref{sccshearcoord}, again performing the matrix multiplication in \eqref{sccshearcoord} in block form as in the argument for Lemma \ref{annularnbhdlemma}, the terms corresponding to boundary segments are present in both the sum over black $v_i$ and the sum over white $v_i$. Canceling these common terms, we get the equality $\sum_{\text{$v_i$ black, $\gamma \in T$ }} l(\gamma,L_i) B(T)_{\gamma,\gamma'}= \sum_{\text{$v_i$ white}} l(\gamma,L_i) B(T)_{\gamma,\gamma'}$ for all $\gamma'$, which says $\sum_{\gamma \in T} p(\gamma,C) B(T)_{\gamma,\gamma'} = 0$ for all $\gamma'$ as desired. 

Thus all of the $R_C$ pair to zero with the row span of $B(T)$. We will now show that they are linearly independent, which completes the proof since they have the expected size by  \cite[Theorem $14.3$]{CATSI}. 
  
Consider a linear relation of the form 
\begin{equation}\label{relationofrelations}
\sum a_C R_C = 0. 
\end{equation}
We define scalars $a_v$ for all marked points $v \in \mathbf{M}$ as follows: if $v$ is a puncture $C$, then $b_v = a_C$. If $v$ is a cilium residing on an even component $C$, then $b_v = \pm a_C$, with $\pm$ sign consistent with the black-white coloring on $C$. If $v$ is cilium on an odd component, we set $a_v = 0$.

Now consider any vertices $v_1,v_2$ forming an edge in the triangulation $T$. We claim  
\begin{equation}\label{balancedvertices} 
a_{v_1}+a_{v_2} = 0.
\end{equation} 

Indeed, if $v_1,v_2$ are the endpoints of an arc $\gamma \in T$, the $\gamma^{\text{th}}$ component of the relation \eqref{relationofrelations} is $a_{v_1}+a_{v_2}$ by construction, and \eqref{balancedvertices} holds.  If they are the endpoints of a boundary segment, then \eqref{balancedvertices} clearly holds.

However, in any given triangle in $T$ with vertices $v_1,v_2,v_2$, the only way for \eqref{balancedvertices} to hold for all $3$ of the pairs $(v_1,v_2),(v_1,v_3),(v_2,v_3)$ is if 
$a_{v_1} =a_{v_2} =a_{v_3} = 0 .$ 
Varying the vertex and triangle containing it, this establishes that all $a_v =0$ for all $v \in \mathbf{M}$, and hence all $a_C = 0$, as desired.    
\end{proof}

\section{Appendix: The starfish lemma on a nerve}\label{StarfishSecn}
We give the appropriate generalization of the Starfish Lemma \cite[Proposition 3.6]{tensors} from a star neighborhood to a nerve. Our proof follows the proof of the Starfish Lemma in \cite{CABook}, with appropriate modifications.

Let $R$ be a  domain. We say two elements $r,r' \in R$ are \emph{coprime} if they are not contained in the same prime ideal of height $1$. When $R$ is a unique factorization domain, every pair of non-associate irreducible elements are coprime. 

\begin{prop}\label{starfishprop}
Let $\mcn$ be a nerve in $\bbt_n$. Let $\mcr$ be a $\bbc$-algebra and a Noetherian normal domain. Let $\mce$ be a seed pattern of geometric type, satisfying the following: 
\begin{itemize}
\item all frozen variables are in $\mcr$
\item for each vertex $t \in \mcn$, the cluster $\mathbf{x}(t) \subset \mcr$, and the cluster variables $x \in \mathbf{x}(t)$ are pairwise coprime elements of $\mcr$; 
\item for each edge $t \xrightarrow{k} t'$ in $\mcn$, the cluster variables $x_k(t)$ and $x_k(t')$ are pairwise coprime. 
\end{itemize}
Then the cluster algebra $\mca$ defined by $\mce$ satisfies $\mca \subset \mcr$. 
\end{prop}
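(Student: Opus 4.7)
The plan is to follow the classical algebraic Hartogs argument behind the usual Starfish Lemma, but to organize the coprimality data along the entire nerve rather than around a single vertex. First I would invoke the Laurent phenomenon: for any $t_0 \in \mcn$, every cluster variable of $\mce$ is a Laurent polynomial in $\mathbf{x}(t_0)$ with coefficients polynomial in the frozen variables, all of which lie in $\mcr$ by hypothesis, so $\mca \subset \mcr[\mathbf{x}(t_0)^{-1}]$. Since $\mcr$ is a Noetherian normal domain, it equals the intersection $\bigcap_\mfp \mcr_\mfp$ over its height-one primes, so proving $\mca \subset \mcr$ reduces to exhibiting, for each such $\mfp$, some $t \in \mcn$ with $x_i(t) \notin \mfp$ for all $i$.

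Fix such a $\mfp$. The within-cluster coprimality hypothesis implies that at most one element of $\mathbf{x}(t)$ can lie in $\mfp$ for any $t \in \mcn$. Suppose for contradiction that every cluster in $\mcn$ has exactly one variable in $\mfp$, and define $f \colon \mcn \to [1,n]$ by letting $f(t)$ be that unique index. The heart of the argument is the claim that $f$ is constant along each edge of $\mcn$: given $t \xrightarrow{j} t'$ with $k = f(t)$, the case $j = k$ would force $x_k(t') \notin \mfp$ by the mutation-coprimality hypothesis, and the untouched entries $x_i(t') = x_i(t)$ for $i \neq k$ would also avoid $\mfp$, yielding a cluster with no variable in $\mfp$ --- contradiction. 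Hence $j \neq k$, from which $x_k(t') = x_k(t) \in \mfp$ and uniqueness gives $f(t') = k$.

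Because $\mcn$ is connected, $f$ is thus globally constant, say $f \equiv k_0$. But by the definition of a nerve $\mcn$ contains an edge $s \xrightarrow{k_0} s'$, and at its endpoints both $x_{k_0}(s)$ and $x_{k_0}(s')$ would lie in $\mfp$, contradicting their coprimality. This produces the desired cluster and finishes the proof. I expect the main obstacle to be essentially the one addressed by the function $f$: the standard star-neighborhood proof uses a single mutation to reroute around a bad prime, whereas here the required reroute may occur at an edge far from any starting vertex, and one has to keep track of which variable is in $\mfp$ across intermediate mutations. Once that bookkeeping is in place, the rest is Laurent phenomenon plus algebraic Hartogs exactly as in the proof in \cite{CABook}.
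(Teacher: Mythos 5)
Your proposal is correct and follows essentially the same route as the paper: reduce via Lemma \ref{heightoneprimes} (normality) and the Laurent phenomenon to finding, for each height-one prime, a cluster on the nerve avoiding it, then use within-cluster coprimality to track the unique ``bad'' index along the connected nerve and edge-coprimality at an edge carrying that label to resolve it. Your contradiction bookkeeping via the function $f$ is just a repackaging of the paper's direct walk along the nerve in Lemma \ref{goodclusterforP}.
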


The proof relies on the following two lemmas, the first of which is a standard fact from commutative algebra.  For a prime ideal $P$, let $R_P = R[(R / P)^{-1}]$ denote the localization of $R$ away from $P$.   
\begin{lem}[{\cite[Theorem 11.5]{Matsumura}}]\label{heightoneprimes} For a normal Noetherian  domain $R$, the natural inclusion $R \subset \cap_{\textnormal{ht $P = 1$}} R_P$ (intersection over height one primes) is an equality. 
\end{lem}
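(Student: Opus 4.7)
The plan is to prove $R = \bigcap_{\operatorname{ht} P = 1} R_P$ by the classical ``ideal of denominators'' argument, with the role of normality isolated in one clean input: that in a normal Noetherian domain every associated prime of a nonzero principal ideal has height one.

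The inclusion $R \subset \bigcap_P R_P$ is immediate since $R$ embeds in each localization $R_P$ inside $K := \operatorname{Frac}(R)$. For the reverse, I would take $x \in K$ with $x \in R_P$ for every height-one prime $P$, and show $x \in R$. Write $x = a/b$ with $a,b \in R$ and $b \neq 0$, and consider the ideal of denominators
\begin{equation*}
I := \{ r \in R : rx \in R \} = \{ r \in R : ra \in bR \}.
\end{equation*}
Then $b \in I$, and showing $x \in R$ is the same as showing $1 \in I$. Assume for contradiction that $I \subsetneq R$.

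The bridge to the hypothesis is the observation that the $R$-linear map $r \mapsto ra \pmod{bR}$ has kernel $I$, hence defines an injection $R/I \hookrightarrow R/bR$. Consequently $\operatorname{Ass}_R(R/I) \subset \operatorname{Ass}_R(R/bR)$. Since $I$ is a proper ideal, $\operatorname{Ass}_R(R/I)$ is nonempty, so we may pick $P \in \operatorname{Ass}_R(R/I)$. If we can show $\operatorname{ht}(P) = 1$, we are done: by hypothesis $x \in R_P$, so some $s \notin P$ satisfies $sx \in R$, i.e.\ $s \in I$; but $I \subset P$ gives $s \in P$, a contradiction.

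The main obstacle is therefore the height-one claim, and this is exactly where normality enters. I would invoke Serre's criterion: a Noetherian domain is normal if and only if it is $R_1$ and $S_2$. The $S_2$ condition gives $\operatorname{depth}(R_Q) \geq \min(\operatorname{ht}(Q), 2)$ for every prime $Q$. For any prime $Q$ of height $\geq 2$ containing $b$, the element $b$ is a nonzerodivisor on $R_Q$ and extends to a regular sequence of length two, so $\operatorname{depth}(R_Q/bR_Q) \geq 1$, forcing $Q \notin \operatorname{Ass}_R(R/bR)$. Since every associated prime of $R/bR$ contains $b$ and has height $\leq \operatorname{ht}(bR) \leq 1$ by Krull's Hauptidealsatz combined with the $S_2$ exclusion above, all such primes have height exactly one. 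Hence our chosen $P$ has height one, completing the contradiction and the proof. Once this $S_2$ input is set up, the rest of the argument is purely formal manipulation of the ideal $I$.
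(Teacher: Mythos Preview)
The paper does not supply its own proof of this lemma; it is quoted as a standard result from Matsumura's \emph{Commutative Ring Theory}, Theorem~11.5, and used as a black box in the Starfish argument. Your proof is correct and is essentially the classical argument that appears in Matsumura: reduce to showing the ideal of denominators $I$ is the unit ideal, embed $R/I$ into $R/bR$, and use the $S_2$ half of Serre's criterion to force every associated prime of $R/bR$ (hence of $R/I$) to have height one, which contradicts the assumption $x \in R_P$ for all such $P$. One small quibble of exposition: Krull's Hauptidealsatz by itself only bounds the height of \emph{minimal} primes over $(b)$, so the phrase ``height $\leq \operatorname{ht}(bR) \leq 1$ by Krull's Hauptidealsatz'' is slightly misleading---the exclusion of embedded (height $\geq 2$) associated primes is entirely carried by the $S_2$ argument you gave, and the Hauptidealsatz is not really needed once that is in place.
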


\begin{lem}\label{goodclusterforP} With hypotheses as in Proposition \ref{starfishprop}, let $P$ be a height one prime ideal in $R$. Then at least one of the products 
\begin{equation}\label{hartogseqn}
\prod_{x \in \mathbf{x}(t), t \in \mcn}  x \end{equation}
is not in $P$.
\end{lem}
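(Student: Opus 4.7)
\textbf{Proof plan for Lemma \ref{goodclusterforP}.}

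The plan is to argue by contradiction. Suppose every product in \eqref{hartogseqn} lies in $P$. Since $P$ is prime, this means that for every $t \in \mcn$, at least one cluster variable in $\mathbf{x}(t)$ belongs to $P$. On the other hand, the pairwise coprimality of cluster variables within a single cluster, combined with the fact that $P$ has height $1$, forces at most one cluster variable in $\mathbf{x}(t)$ to lie in $P$. Thus the assumption determines a well-defined function $\gamma \colon \mcn \to [1,n]$ sending $t$ to the unique index with $x_{\gamma(t)}(t) \in P$.

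The key step is to analyze how $\gamma$ behaves along an edge $t \xrightarrow{k} t'$ of $\mcn$. Since the clusters $\mathbf{x}(t)$ and $\mathbf{x}(t')$ agree in every direction $j \neq k$ by \eqref{nnxrecurrenceatj}, and since $x_k(t), x_k(t')$ are coprime by hypothesis (so at most one of them lies in $P$), I will distinguish two cases. If $\gamma(t) = k$, then $x_k(t) \in P$ but $x_k(t') \notin P$, and none of the common variables $x_j(t) = x_j(t')$ (for $j \neq k$) lies in $P$ either; this contradicts the existence of $\gamma(t')$. Therefore $\gamma(t) \neq k$ for every edge $t \xrightarrow{k} t'$ in $\mcn$, and moreover $x_{\gamma(t)}(t) = x_{\gamma(t)}(t') \in P$ forces $\gamma(t') = \gamma(t)$.

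Combining these observations, $\gamma$ is locally constant along the edges of $\mcn$, hence constant on the connected subgraph $\mcn$ with some common value $i_0 \in [1,n]$. But we have also shown that $\gamma(t) \ne k$ for every edge label $k$ appearing in $\mcn$, so the label $i_0$ never occurs on any edge of $\mcn$. This contradicts the defining property of a nerve, which requires every label in $[1,n]$ to occur at least once. Hence the initial assumption fails and some product in \eqref{hartogseqn} lies outside $P$.

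I expect no real obstacle: the whole argument is a clean bookkeeping of coprimality at height-one primes together with connectedness and the label-hitting property of a nerve. The only subtle point is the case analysis for the edge $t \xrightarrow{k} t'$, where one must use both coprimality hypotheses (intra-cluster and mutation-pair) to rule out $\gamma(t) = k$.
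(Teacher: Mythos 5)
Your proof is correct and uses the same underlying mechanism as the paper's: intra-cluster coprimality pins down a unique ``bad'' index at each vertex, edge coprimality shows this index can never equal the label of an incident edge of $\mcn$ (and is therefore locally constant), and the nerve's label-covering property then yields the conclusion. The paper phrases this as a direct walk along the nerve toward an edge carrying the bad label, whereas you run the contrapositive, but the two arguments are essentially identical.
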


\begin{proof} By the coprimeness in each cluster $t \in \mcn$, at most one of the cluster variables $x$ in a product \eqref{hartogseqn} satisfies $x \in P$. We will show that for at least one $t$, \emph{none} of the cluster variables is in $P$, establishing our claim since $P$ is prime. Pick any vertex $t_0 \in \mcn$, and suppose the cluster variable $x_i \in P$. Given an edge $t_0 \xrightarrow{j} t_0' \subset \mcn$ where $j \neq i$, the cluster variable $x_j(t_0') \notin P$ by the coprimality assumption in the cluster at $t_0'$. Repeatedly applying this assumption while mutating along the nerve, using the connectedness hypothesis and the fact that every edge label shows up in the nerve, we finally arrive at a vertex $t \in \mcn$ such that the edge $t \xrightarrow{i} t' \subset \mcn$, and all of the extended cluster variables $x_j \in \tilde{\mathbf{x}}(t)$ with $j \neq i$ are not in $P$. By the coprimeness assumption along edge $i$, we see $x_i(t') \notin P$, and the cluster at $t'$ is one where the product  \eqref{hartogseqn} is not in $P$. 
\end{proof}

\begin{proof}[Proof of Proposition \ref{starfishprop}]
We need to prove each cluster variable $z$ is in $R$. By Lemma \ref{heightoneprimes}, it suffices to show $z \in R_P$ for any height one prime $P$. Indeed, by Lemma \ref{goodclusterforP} there is a cluster $t \in \mcn$ such that $\prod_{x \in \mathbf{x}(t)} x \notin P$. By the Laurent Phenomenon, $z$ is a Laurent polynomial in the elements of $\mathbf{x}(t)$, with coefficients in $\bbc[x_{n+1},\dots,x_{n+m}]$. In particular, $z \in R_P$, as desired. 
\end{proof}

\section{Appendix: Grassmannians and Band Matrices}\label{GrassmannianandBands}
As an illustration of Proposition \ref{qhnormalizedprop}, we extend the constructions in Example \ref{GrandBandsQH} and Example \ref{GrandBandsQI} from the case $(k,n) = (2,5)$ to general $(k,n)$. Let 
\begin{equation}\label{bfxdefn}
\bfx = \tGr(n-k,n)
\end{equation}
be the affine cone over the Grassmannian of $(n-k)$-dimensional subspaces of $\bbc^n$. Its points are the decomposable tensors in $\Lam^{n-k}{\bbc^n}$. 
Let 
\begin{equation}\label{bfydefn}
\bfy \cong \bbc^{(n-k)(k+1)}
\end{equation} 
be the affine space of $(n-k) \times n$ \emph{band matrices of width $k+1$}, i.e. the set of matrices $Y$ whose entries $y_{i,j}$ are zero unless $i \leq j \leq i+k$.  

We will describe  a quasi-isomorphism of the coordinate rings $\bbc[\bfx]$ and $\bbc[\bfy]$, and in particular a cluster structure on $\bbc[\bfy]$ which appears to be new. 

The coordinate ring $\bbc[\bfx]$ is the ring generated by the Pl\"ucker coordinates $\Delta_S$ as $S$ ranges over $(n-k)$-subsets of $n$. It has a  cluster structure of geometric type cf.~\cite{Scott} in which the frozen variables are those  Pl\"ucker coordinates consisting of cyclically consecutive columns. The non-frozen Pl\"ucker coordinates are all cluster variables. We will introduce a useful sign convention: if $S$ is any set of $(n-k)$ natural numbers, we let $\Delta_S$ denote the Pl\"ucker coordinate obtained by first reducing all the elements of $S$ to their least positive residue modulo $n$, sorting these residues, and then taking the corresponding Pl\"ucker coordinate. If there are fewer than $(n-k)$ distinct elements in $S$ modulo $n$, then $\Delta_S$ is identically zero.

The coordinate ring $\bbc[\bfy]$ contains minors $Y_{I,J}$ for $I \subset [1,n-k], J \subset [1,n]$ subsets of the same size denoting row and column indices respectively. It is a polynomial ring in the coordinate functions $Y_{i,j}$, $1 \leq i \leq j \leq i+k \leq n$. The following elements will serve as frozen variables in $\bbc[\bfy]$:  
\begin{align} 
Y_{1,1},Y_{2,2},\dots,Y_{n-k,n-k} \text{ and } Y_{1,k+1},Y_{2,k+1},\dots,Y_{n-k,n} \text{ , and} \\
Y_{[1,n-k],[2,n-k+1]},Y_{[1,n-k],[3,n-k+2]},\dots,Y_{[1,n-k],[k,n-1]};
\end{align}
we let $\ov{\bfp}$ denote the corresponding tropical semifield in these frozen variables. 

%These are exactly the irreducible factors of $F^*(\Delta_S)$ in $\bbc[\bfy]$, as $\Delta_S$ varies over the frozen variables in $\bbc[\bfx]$. 

Just as in Example \ref{GrandBandsQH}, there is a morphism of varieties $F \colon \bfy \to \bfx$ sending $Y \in \bfy$ to the decomposable tensor $Y[1] \wedge \cdots \wedge Y[n-k] \in \bfx$, where the $Y[i]$ are the rows of $Y$. The map on coordinate rings is 
\begin{equation}\label{Fstardefn}
F^* \colon \bbc[\bfx] \to \bbc[\bfy], \text{ defined by $F^*(\Delta_S) = Y_{[1,n-k],S}$.}
\end{equation}

Letting $\Delta_{S}$ be any non-frozen Pl\"ucker coordinate, one sees that 
\begin{equation}\label{conPluckers}
F^*(\Delta_S) = c(S) \cdot Y_{I(S),J(S)} 
\end{equation}
where $c(S) \in \ov{\bfp}$ and $Y_{I(S),J(S)}$ is a non-frozen irreducible row-solid minor in $\bbc[\bfy]$. The map $\Delta_S \mapsto Y_{I(S),J(S)}$ is a bijection between the non-frozen Pl\"ucker coordinates in $\bbc[\bfx]$ and the non-frozen irreducible row-solid minors in $\bbc[\bfy]$.  

Just as in Example \ref{GrandBandsQI}, there is a morphism of varieties $G \colon \bfx \to \bfy$ sending $X \in \bfx$ to the band matrix whose whose $(i,j)$ entry is a certain Pl\"ucker coordinate evaluated on $X$:
\begin{equation}\label{gijdefn}
G(X)_{i,j} = \Delta_{[i+k+1,n+i-1] \cup j }(X).
\end{equation}
Since the Pl\"ucker coordinate on the right hand side of \eqref{gijdefn} is only nonzero when \\ unless~$i \leq j \leq i+k$, $G(X)$ is indeed a point in $\bfy$. 
The map on coordinate rings is
\begin{equation}\label{Gstardefn}
G^*(Y_{i,j}) = \Delta_{[i+k+1,n+i-1] \cup j }.
\end{equation} 

\begin{thm}\label{GrandBandsThm}
Let $\bfx$ and $\bfy$ be the varieties in \eqref{bfxdefn} and \eqref{bfydefn}. Let $\ov{\mcf}_{>0}$ be the semfield of subtraction-free expressions in the $Y_{i,j}$. Abusing notation, let $F^* \colon \mcf_{>0} \to \ov{\mcf}_{>0}$ be the semfield map determined by the map $F^*$ from \eqref{Fstardefn}, and let $G^* \colon \ov{\mcf}_{>0} \to \mcf_{>0}$ be the map determined by \eqref{Gstardefn}. Then $\bbc[\bfy]$ is a cluster algebra of geometric type, and the maps $F^*$ and $G^*$ are quasi-inverses.
All of the irreducible row-solid minors in $\bbc[\bfy]$ are cluster or frozen variables. 
\end{thm}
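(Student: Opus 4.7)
The plan is to use $F^*$ to transport Scott's cluster structure on $\bbc[\bfx]$ across to $\bbc[\bfy]$ via Proposition \ref{qhnormalizedprop}, then verify via Lemma \ref{constructQIs} that $G^*$ is a quasi-inverse, and finally use the nerve version of the starfish lemma (Proposition \ref{starfishprop}) to identify the resulting cluster algebra with the polynomial ring $\bbc[\bfy]$. Specifically, starting from Scott's initial seed on $\tGr(n-k,n)$, Proposition \ref{qhnormalizedprop} applied to the semifield map $F^*\colon\mcf_{>0}\to\ov{\mcf}_{>0}$ produces a normalized seed pattern $\ov{\mce}$ in $\ov{\mcf}_{>0}$ whose cluster variables are $F^*(\Delta_S)/c(\Delta_S)$. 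Using \eqref{conPluckers} propagated along exchange relations, I would argue by induction on distance from the initial seed that for each non-frozen Pl\"ucker $\Delta_S$ there is a well-defined non-frozen irreducible row-solid minor $Y_{I(S),J(S)}$ such that $F^*(\Delta_S) = c(S)\cdot Y_{I(S),J(S)}$ with $c(S)$ precisely the normalizing monomial prescribed by \eqref{seedhomtopatternhomeqsiv}. This sets up a bijection between non-frozen Pl\"uckers and non-frozen irreducible row-solid minors and shows the cluster variables of $\ov{\mce}$ are exactly the latter.

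To establish that $F^*$ and $G^*$ are quasi-inverses, I would apply Lemma \ref{constructQIs}: it suffices to check that $G^*\circ F^*(\Delta_S)\asymp \Delta_S$ for every Pl\"ucker coordinate $\Delta_S$ (and that $G^*$ preserves coefficients, which is a direct check on the finite list of frozens). Substituting \eqref{Fstardefn} and \eqref{Gstardefn}, this becomes the statement that the maximal minor of the band matrix $G(X)$ in columns $S$ equals a frozen monomial in the Pl\"uckers of $X$ times $\Delta_S(X)$. I would expect this to follow from an elementary Pl\"ucker-coordinate / determinant identity, generalizing \eqref{FGcomp}, obtained by Laplace-expanding $\det(G(X)_{[1,n-k],S})$ and recognizing the resulting sum as a Pl\"ucker relation.

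Given quasi-invertibility, the remaining task is to show that the cluster algebra $\mca(\ov{\mce})$ equals $\bbc[\bfy]$. The containment $\mca(\ov{\mce})\subset\bbc[\bfy]$ is an application of Proposition \ref{starfishprop}: a convenient nerve consists of the initial cluster plus one mutation in each direction, and by the bijection above every cluster variable met along this nerve is an irreducible row-solid minor, hence lies in $\bbc[\bfy]$; coprimeness in clusters and along mutated edges follows because distinct irreducible row-solid minors are non-associate irreducibles in the UFD $\bbc[\bfy]$. The reverse containment $\bbc[\bfy]\subset\mca(\ov{\mce})$ reduces to showing each coordinate function $Y_{i,j}$ is a cluster or frozen variable; but each $Y_{i,j}$ is itself a (size-one) irreducible row-solid minor, and the bijection of the previous paragraph places it among the cluster or frozen variables. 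This simultaneously establishes the final assertion that all irreducible row-solid minors are cluster or frozen variables.

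The main obstacle I anticipate is the first paragraph: proving that the correspondence $\Delta_S\mapsto Y_{I(S),J(S)}$ really is a bijection of the expected shape, and that the frozen factor $c(S)$ produced by iterated application of \eqref{conPluckers} through the mutation recurrence matches the normalization $c(\Delta_S)$ prescribed by Proposition \ref{qhnormalizedprop}. This requires a precise characterization of which minors of a width-$(k{+}1)$ band matrix are irreducible (as opposed to factoring as in the $Y_{12,13}=Y_{1,1}Y_{2,3}$ example), and a compatible bookkeeping of the frozen monomials appearing when one clears common factors from each exchange relation, as illustrated in Figure \ref{NNBandThreeFiveFig}.
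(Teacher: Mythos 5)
Your architecture coincides with the paper's: transport Scott's structure via Proposition \ref{qhnormalizedprop}, verify $G^*\circ F^*(\Delta_S)\asymp\Delta_S$ through a determinantal identity proved by Laplace expansion plus a Pl\"ucker relation, and settle the containment $\mca(\ov{\mce})\subset\bbc[\bfy]$ with the starfish argument. But the step you defer to the final paragraph as ``the main obstacle I anticipate'' is precisely the heart of the proof, and as written your proposal does not supply it. The issue is this: Proposition \ref{qhnormalizedprop} attaches to each choice of Pl\"ucker cluster $\Sigma$ its own normalizing semifield map $c_\Sigma$, and a priori the normalized seeds $F^*(\Sigma)$ for different $\Sigma$ need not assemble into a single seed pattern. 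One must prove that $c_\Sigma(\Delta_S)$ equals the factorization constant $c(S)$ of \eqref{conPluckers} for \emph{every} non-frozen $\Delta_S$, not just those in $\Sigma$, so that the normalization is independent of the cluster. The paper does this by restricting to the subgraph of Pl\"ucker clusters, which are pairwise connected by mutations whose exchange relations are short Pl\"ucker relations, and then verifying the tropical identity $c(Sik)c(Sj\ell)=c(Sij)c(Sk\ell)\oplus c(Sjk)c(Si\ell)$ by a case analysis on the exponents of each frozen variable $Y_{a,a}$ and $Y_{a,a+k}$. Your proposed ``induction on distance from the initial seed'' would, if carried out, reduce to exactly this computation at each inductive step --- but you have not performed it, and without it the bijection between non-frozen Pl\"uckers and non-frozen irreducible row-solid minors, the claim that the $F^*(\Sigma)$ are mutation-equivalent seeds, and the identification of all $Y_{i,j}$ as cluster variables all remain unestablished.

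Two smaller cautions. First, your induction must be run over the subgraph of Pl\"ucker clusters (connected by square moves), not over the full exchange graph: for general $(k,n)$ most clusters contain non-Pl\"ucker cluster variables, for which \eqref{conPluckers} says nothing. Second, your ``convenient nerve'' (a star neighborhood of the initial seed) only consists of Pl\"ucker clusters if the initial seed is chosen so that every neighbor is again a Pl\"ucker cluster; this holds for the grid-quiver seed but not for an arbitrary Pl\"ucker seed, and the paper invokes this as a separate fact.
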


\begin{proof}
The proof relies on the following well-known properties of the cluster structure on $\bbc[\bfx]$:  
\begin{enumerate}
\item There exist clusters in $\bbc[\bfx]$ consisting entirely of Pl\"ucker coordinates (called \emph{Pl\"ucker clusters}). Every non-frozen Pl\"ucker coordinate shows up in at least one of these clusters.  
\item The set of Pl\"ucker clusters are connected to one another by mutations whose exchange relations are \emph{short Pl\"ucker relations}, of the form $\Delta_{S \cup ik}\Delta_{Sj\ell} = \Delta_{Sij}\Delta_{Sk\ell}+ \Delta_{Sjk}\Delta_{Si\ell}$, for $i < j < k < l$ and $Sij$ denotes the union $S \cup \{i,j\}$).
\item For certain Pl\"ucker clusters, every neighboring cluster is again a Pl\"ucker cluster.  
\end{enumerate}

The first two of these facts are consequences of the technology of plabic graphs and square moves, the third  fact follows by considering a particular explicit Pl\"ucker seed for the Grassmannian (one whose quiver is a ``grid quiver''). 

We can now state Theorem \ref{GrandBandsQH} more carefully. Let $\Sigma$ be any Pl\"ucker cluster in $\bbc[\bfx]$. For each Pl\"ucker coordinate $x_i \in \Sigma$, let $\ov{x}_i$ be the irreducible row-solid minor in $\bbc[\bfy]$ related to $x_i$ as in \eqref{conPluckers}. We will see below that $\{\ov{x}_i \colon x_i \in \Sigma \}$ are algebraically independent generators for $\ov{\mcf}_{>0}$ over $\ov{\bfp}$. Assuming this has been proved, applying the construction in Proposition \ref{qhnormalizedprop}, we obtain a semifield map $c_\Sigma \colon \mcf_{>0} \to \ov{\bfp}$ and a normalized seed $F^*(\Sigma)$ whose cluster variables are $\ov{x}_i = \frac{F^*(x_i)}{c_\Sigma(x_i)}$ as in \eqref{seedhomtopatternhomeqsiv}. The semifield map $c_\Sigma$ satisfies $c_\Sigma(\Delta_S) = c(S)$ for all $S \in \Sigma$, where $c(S)$ is defined by \eqref{conPluckers}. The \emph{key claim} is that in fact $c_\Sigma(\Delta_S) = c(S)$ holds for all $\Delta_S$, and therefore the semifield map $c_\Sigma$ does not depend on $\Sigma$.

From this key claim, it follows that the seeds $F^*(\Sigma)$ are all related to each other by mutation using Proposition \ref{qhnormalizedprop}. Furthermore, every non-frozen irreducible row-solid minor is a cluster variable in $\ov{\mce}$ by \eqref{seedhomtopatternhomeqsiv}. Since this includes all of the $Y_{i,j}$, this shows that the $F^*(\Sigma)$ are indeed seeds -- each seed has the expected size necessary to form a transcendence basis for $\bbc(\bfy)$, the seeds are all related to each other by mutation, and their union clearly contains a generating set for the field of fractions. The cluster algebra for the resulting seed pattern clearly contains $\bbc[\bfy]$. The opposite containment follows from the Algebraic Hartogs' argument on a starfish cf.~Section~\ref{StarfishSecn}, using Fact (3) above. 

Thus it remains to check the key claim that $c_\Sigma(\Delta_S) = c(S)$ for all non-frozen $S$. By Fact (2), it suffices to check that $c$ preserves the short Pl\"ucker relations, i.e.
\begin{equation}\label{checkcsmatch}
c(Sik)c(Sj\ell) = c(S ij)c(S k\ell) \oplus  c(S jk)c(S i\ell).
\end{equation}

Verifying \eqref{checkcsmatch} is a direct piecewise check: the exponent of $Y_{a,a}$ in the left hand side of \eqref{checkcsmatch} is $0,1,2$ according to whether neither, one of, or both of $Sik$ and $Sj\ell$ contain the interval $[1,a]$. Performing the similar computation for $Sij$ and $Sk \ell$, as well as $Sjk$ and $Si \ell$, and taking the minimum of their respective answers, gives the exponent of $Y_{a,a}$ in the right hand side, and we claim these left and right hand exponents are always equal. This can be done by a case analysis: let $E$ be the largest number such that $[1,E] \subset Sijkl$. If $E <i$, then both sides return a $2$ if $a \leq E$, and $0$ otherwise. If $i \leq E < j$, then both sides return a $2$ if $a < i$, return a $1$ if $i \leq a \leq E$, and return a $0$ otherwise. If $j \leq E $ then both sides return a $2$ if $a < i$, return a $1$ if $i \leq a < j$, and return a $0$ otherwise. A similar calculation checks that the exponents of $Y_{a,a+k}$ match up in both sides of \eqref{checkcsmatch}.

Finally we check that $G^*$ is a quasi-inverse to $F^*$.  By Lemma \ref{constructQIs}, we only need to see that $G^*$ preserves coefficients and that $G^* \circ F^*$ is proportional to the identity. It suffices to check that  %(both frozen and non-frozen), 
$G^* \circ F^* (\Delta_S) \asymp \Delta_S$ for every $\Delta_S$. This follows from the determinantal identity Lemma  \ref{flattoband} below, applied to $G^* \circ F^* (\Delta_S) = G^* (Y_{[1,n-k],S})$. 
\end{proof}

\begin{lem}\label{flattoband}  Let $I = [a,a+s-1]$ be some consecutive subset of $[n-k]$, and $J $ a subset of $[a,a+s-1+k]$ of size $s$. Then for $X \in \bbc[\bfx]$, 
\begin{equation}\label{thirdminoreq} Y_{I,J}(G(X)) =  \left(\prod_{i=a}^{a+s-2} \Delta_{[i+k+1,n+i]  }(X) \right) \cdot  \Delta_{ [a+k+s,n+a-1] \cup J}(X)
\end{equation}
\end{lem}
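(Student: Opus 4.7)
The plan is to give $G(X)$ a geometric meaning and reduce the claim to a standard identity relating Pl\"ucker coordinates of a subspace $V$ to those of its intersection with a coordinate subspace. Represent $X$ by an $(n-k) \times n$ matrix whose rows span $V \subset \bbc^n$, and set $C_i = [i, i+k]$ and $\bar C_i = [1,n] \setminus C_i$. The $i$-th row $R_i$ of $G(X)$, viewed as a vector in $\bbc^n$, is supported in $C_i$ with entries $\Delta_{\bar C_i \cup \{j\}}(X)$; these are (up to sign) the components of the contraction of a Pl\"ucker vector $\omega_V \in \Lambda^{n-k}\bbc^n$ against $e^{\bar C_i}$. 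In particular $R_i \in V \cap \bbc^{C_i}$, and generically $R_i$ spans this one-dimensional line. Setting $B := [a, a+s-1+k]$, the wedge $R_a \wedge \cdots \wedge R_{a+s-1}$ lies in $\Lambda^s(V \cap \bbc^B) \subset \Lambda^s \bbc^B$, and by construction the minor $Y_{I,J}(G(X))$ is its $J$-coefficient in the basis $\{e_J : J \subset B,\, |J|=s\}$.

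Generically $\dim(V \cap \bbc^B) = s$, so $\Lambda^s(V \cap \bbc^B)$ is one-dimensional and the wedge is a scalar multiple of a Pl\"ucker representative of $W := V \cap \bbc^B \in \Gr(s, \bbc^B)$. The classical proportionality $\Delta^W_J \propto \Delta^V_{J \cup \bar B}(X)$ (as $J$ varies, with constant depending only on $V$ and $B$) follows from decomposing $\omega_V = \omega_W \wedge \omega_{V'}$ for a complement $V'$ of $W$ in $V$ and comparing coefficients. Combining these two facts will yield
\[
Y_{I,J}(G(X)) = c \cdot \Delta_{J \cup \bar B}(X)
\]
for some scalar $c$ independent of $J$. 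Under the residue convention of Section \ref{GrassmannianandBands}, $J \cup \bar B = [1,a-1] \cup J \cup [a+s+k, n]$ coincides with the set $[a+k+s, n+a-1] \cup J$ appearing in the statement.

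To pin down $c$, I will specialize $J = J_0 := [a, a+s-1]$. Because $G(X)_{i,j} = 0$ whenever $j < i$, the $s \times s$ submatrix $G(X)_{I, J_0}$ is upper-triangular, and its determinant collapses to the product of diagonal entries $G(X)_{i,i} = \Delta_{[i+k+1, n+i]}(X)$ for $i \in I$. For this same $J_0$, the index set $[a+k+s, n+a-1] \cup J_0$ equals $[(a+s-1)+k+1,\, n+(a+s-1)]$, which is precisely the $i = a+s-1$ factor of the product above. Matching the two sides forces
\[
c = \prod_{i=a}^{a+s-2} \Delta_{[i+k+1, n+i]}(X),
\]
which is the claimed formula.

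The main obstacle will be sign bookkeeping in the proportionality step: the decomposition $\omega_V = \omega_W \wedge \omega_{V'}$ introduces signs when reordering $e_J \wedge e_{\bar B}$ into a sorted tuple for $e_{J \cup \bar B}$, whereas the Pl\"ucker convention of Section \ref{GrassmannianandBands} absorbs reorderings by re-sorting residues modulo $n$. Since the identity is an equality of regular functions on the irreducible variety $\bfx$, it is enough to establish it on a dense Zariski open where $\dim(V \cap \bbc^B) = s$ and the $R_i$ are linearly independent, after which the specialization at $J = J_0$ pins down the correct overall sign of $c$.
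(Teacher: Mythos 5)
Your proof is correct, but it takes a genuinely different route from the paper. The paper argues by induction on $s$: it Laplace-expands $Y_{I,J}(G(X))$ along the first row, applies the inductive hypothesis to the complementary minors, and recognizes the resulting sum as one side of a long Pl\"ucker relation. You instead give a direct geometric argument: each row $R_i$ of $G(X)$ is (up to a sign that is uniform over $j\in C_i$, since $j$ always occupies the same position in the sorted set $\bar C_i\cup\{j\}$) the contraction $\iota_{e^{\bar C_i}}\omega_V$, hence lies in $V\cap\bbc^{C_i}\subseteq V\cap\bbc^{B}$; generically $\dim(V\cap\bbc^{B})=s$, so $R_a\wedge\cdots\wedge R_{a+s-1}$ is a multiple of $\omega_W$ for $W=V\cap\bbc^{B}$, whose Pl\"ucker coordinates are proportional to $\Delta_{J\cup\bar B}(X)$ with a shuffle sign $(-1)^{s(a-1)}$ that is indeed independent of $J$ (the elements of $J\subseteq B$ must each pass the $a-1$ elements of $[1,a-1]$); and the upper-triangular specialization $J=J_0=I$ pins down the constant, with density of the good locus in the irreducible variety $\bfx$ finishing the argument. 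What the two approaches buy: the paper's induction is a self-contained polynomial identity requiring only one classical Pl\"ucker relation and no genericity; yours is more conceptual---it explains \emph{why} every consecutive-row minor of $G(X)$ is a frozen monomial times a Pl\"ucker coordinate (the rows of $G(X)$ are vectors of $V$ supported in sliding windows)---at the cost of the genericity reduction and the two sign-uniformity checks, both of which are routine and which you correctly flag as the only delicate points.
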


Notice that the first product on the right hand side of \eqref{thirdminoreq} is a monomial in the frozen Pl\"ucker coordinates.

\begin{proof} Proceed by induction on $s$. It's clear when $s=1$. For $s >1$, we will need a Pl\"ucker relation 
\begin{equation}\label{LongPlucker}
\Delta_{[a+k+1,n+a]}(X) \Delta_{[a+k+s,n+a-1] \cup J}(X) = \sum_{\ell=1}^s (-1)^{\ell+1}\Delta_{[a+1+k,n+a-1] \cup j_\ell}(X) \Delta_{[a+k+s,n+a] \cup (J-j_\ell)}(X),
\end{equation}
see e.g. \cite[Section 9.1, Exercise 1]{YoungTableaux}. Let $J = \{j_1,\dots,j_s\}$ with $j_1 < j_2 < \dots < j_s$. Assuming \eqref{thirdminoreq} holds for smaller values of $s$, we expand along the first row to see
\begin{align*}
Y_{I,J}(G(X)) &= \sum_{\ell = 1}^s (-1)^{\ell+1} Y_{a,j_\ell}(G(X)) Y_{(I-a),(J-j_\ell)}(G(X)) \\
&= \sum_{\ell = 1}^s (-1)^{\ell+1} \Delta_{[a+k+1,n+a-1]}(X) (\prod_{i=a+1}^{a+s-2} \Delta_{i+k+1,n+i}(X)) \cdot \Delta_{[a+k+s,n+a] \cup (J-j_\ell)} \\
&= (\prod_{i=a+1}^{a+s-2} \Delta_{i+k+1,n+i}(X))  \sum_{\ell=1}^s (-1)^{\ell+1}\Delta_{[a+1+k,n+a-1] \cup j_\ell}(X) \Delta_{[a+k+s,n+a] \cup (J-j_\ell)}(X), 
\end{align*}
and the result follows using \eqref{LongPlucker}.   
\end{proof}

\begin{rmk} In the case $k=2$, our construction is the ``motivating example'' considerd by Yang and Zelevinsky \cite{YZ}. They establish that the homogeneous coordinate ring of a certain $\SL_{n+1}$-double Bruhat cell is a Dynkin type $A_n$ cluster algebra with principal coefficients. The elements of this double Bruhat cell are $(n+1) \times (n+1)$ band matrices of width $3$. Their example follows from ours by setting certain frozen variables equal to $1$, and setting $Y_{1,1} $ and $Y_{n-k,n}$ equal to $0$ (this latter operation is permissible because both of these frozen variables are isolated vertices in the quivers for $\bbc[\bfy]$).

We also remark that it is already known that the Grassmannian cluster algebras are quasi-isomorphic to a polynomial ring, by a fairly uninteresting quasi-isomorphism. Indeed, we can realize the affine space of $(n-k) \times k$ matrices as the closed subvariety of $\tGr(n-k,n)$ defined by specializing the frozen variable $\Delta_{[1,n-k]}$ to  $1$, and this specialization is a quasi-isomorphism. The resulting cluster structure on the polynomial ring is unrelated to the one we have given in this section. 
\end{rmk}

\end{document}